\documentclass[a4paper]{article}
\pdfoutput=1
\usepackage{amsfonts,amsmath,amssymb,amsthm}
\usepackage{graphicx}
\graphicspath{{./figures/}}
\usepackage{amsfonts}
\usepackage{amsmath,amssymb}
\usepackage{amsthm}
\usepackage{array}
\usepackage{csquotes}
\usepackage{enumitem}
\usepackage{subcaption}
\usepackage{graphicx}
\usepackage[dvipsnames]{xcolor}
\usepackage{latexsym}
\usepackage{mathtools}
\usepackage{tikz}
\usetikzlibrary{
    cd
}
\usepackage{type1cm}
\usepackage{bm}
\usepackage{multirow}
\captionsetup{compatibility=false}
\usepackage[
    backend=biber,
    style=alphabetic,
    arxiv=abs,
    isbn=false,
    giveninits=true,
    maxalphanames=8,
    minalphanames=4,
    maxnames=8,
    minnames=4,]{biblatex}
\DeclareFieldFormat[misc]{title}{{#1}}
\addbibresource{references.bib}
\usepackage[unicode,bookmarksdepth=3,bookmarksnumbered,bookmarksopen,pdfencoding=auto]{hyperref}
\hypersetup{
    colorlinks=true,
    linktocpage=true,
    setpagesize=false,
    citecolor=Cyan,
    linkcolor=Magenta,
    urlcolor=Green,
}
\usepackage[margin=30truemm]{geometry}
\usepackage{cleveref}
\theoremstyle{plain}
\newtheorem{theorem}{Theorem}[section]
\newtheorem{proposition}[theorem]{Proposition}
\newtheorem{lemma}[theorem]{Lemma}

\theoremstyle{definition}
\newtheorem{definition}[theorem]{Definition}
\newtheorem{example}[theorem]{Example}
\newtheorem{remark}[theorem]{Remark}
\newtheorem*{notation}{Notation}
%
\crefname{section}{Section}{Sections}
\crefname{subsection}{Section}{Sections}
\crefname{theorem}{Theorem}{Theorems}
\crefname{definition}{Definition}{Definitions}
\crefname{proposition}{Proposition}{Propositions}
\crefname{lemma}{Lemma}{Lemmas}
\crefname{corollary}{Corollary}{Corollaries}
\crefname{example}{Example}{Examples}
\crefname{proof}{Proof}{Proof}
\crefname{remark}{Remark}{Remarks}
\crefname{equation}{eq.}{eq.}
\crefname{figure}{Figure}{Figures}
\crefname{table}{Table}{Tables}

\numberwithin{equation}{section}
\newcommand{\id}{\mathrm{id}}
\DeclareMathOperator{\Rep}{Rep}

\title{Invariants of flat connections on 4-manifolds from Hopf group-algebras}
\author{Tomoro Mochida\thanks{Mathematical Institute, Tohoku University, 6-3, Aoba, Aramaki-aza, Aoba-ku, Sendai, 980-8578, Japan\\
\hspace*{1.8em}Email: \href{mailto:tomorou.mochida.r5@dc.tohoku.ac.jp}{\texttt{tomorou.mochida.r5@dc.tohoku.ac.jp}}}}
\date{}
\begin{document}

\maketitle

\begin{abstract}
    For a given group $G$, we construct an invariant of flat $G$-connections on 4-manifolds from a finite type involutory quasitriangular Hopf $G$-algebra. Hopf $G$-algebras are generalizations of Hopf algebras, equipped with gradings by $G$. In our construction, we color the dotted components of a Kirby diagram with elements of $G$ and employ the Hennings-type procedure. When $G$ is finite, we also define an invariant of 4-manifolds by summing the invariants over all flat $G$-connections.
\end{abstract}

\section{Introduction}

For a group $G$, Turaev~\cite{turaev2000homotopy} introduced the notion of a $G$-category in the context of homotopy quantum field theory and derived an invariant of flat $G$-connections on 3-manifolds (which he referred to as 3-dimensional $G$-manifolds). In the same paper, he also introduced Hopf $G$-coalgebras as objects whose categories of representations possess the structure of $G$-categories. When $G$ is abelian, this notion coincides with Ohtsuki's colored Hopf algebras \cite{ohtsuki1993colored}. Virelizier~\cite{virelizier2002hopf,virelizier2005involutory} examined the algebraic properties of Hopf $G$-coalgebras and showed that there are many similarities to the theory of ordinary Hopf algebras. Furthermore, he constructed two types of invariants of flat $G$-connections on 3-manifolds out of Hopf $G$-coalgebras: Hennings-type~\cite{virelizier2001algebras} and Kuperberg-type~\cite{virelizier2005involutory} invariants. Recently, De Renzi, Martel, and Wang~\cite{de2024hennings} introduced the notion of a Hopf $G$-bialgebra for an abelian group $G$, generalizing Hopf $G$-coalgebras, and constructed a $3$-dimensional topological quantum field theory.

In low-dimensional topology, many attempts have been made to construct invariants using $G$-categories and Hopf $G$-coalgebras. In the 4-dimensional case, for example, Beliakova and De~Renzi~\cite{beliakova2023refined} constructed an invariant of 4-dimensional $2$-handlebodies from Hopf $G$-coalgebras. Cui~\cite{cui2019four} and B{\"a}renz~\cite{barenz2023evaluating} each constructed an invariant of 4-manifolds from what they called $G$-crossed braided spherical fusion categories, which are $G$-categories with some additional structures. Also, Douglas and Reutter~\cite{douglas2018fusion} generalized Cui's construction using spherical fusion 2-categories, which are generalizations of $G$-crossed braided spherical fusion categories.

The main algebraic object in this paper is a Hopf $G$-algebra (\cref{def:hga}), which is the dual notion of a Hopf $G$-coalgebra. This was first suggested by Turaev~\cite{turaev2000homotopy} and explicitly formulated by Zunino~\cite{zunino2004double}. Roughly speaking, a Hopf $G$-algebra $\mathcal{H}$ is a family $\{H_\alpha\}_{\alpha\in G}$ of vector spaces equipped with a product $\{m_{\alpha,\beta}\colon H_\alpha\otimes H_\beta\to H_{\alpha\beta}\}_{\alpha,\beta\in G}$, a unit $\eta\colon\mathbb{C}\to H_1$, a coproduct $\{\Delta_\alpha\colon H_\alpha\to H_\alpha\otimes H_\alpha\}_{\alpha\in G}$, a counit $\{\varepsilon\colon H_\alpha\to\mathbb{C}\}_{\alpha\in G}$, and an antipode $\{S_\alpha\colon H_{\alpha}\to H_{\alpha^{-1}}\}_{\alpha\in G}$ that satisfy certain compatibility conditions. The sextuple $(H_1,m_{1,1},\eta,\Delta_1,\varepsilon_1,S_1)$ indexed by the identity element of $G$ is an ordinary Hopf algebra. When $G$ is trivial, the definition reduces to that of a Hopf algebra.

On the geometric side, given a group $G$, we consider flat connections on 4-manifolds (\cref{def:connection}), namely pairs $(M,\rho)$ consisting of a 4-manifold $M$ and a homomorphism $\rho\colon\pi_1(M)\to G$. Under the assumption that a Hopf $G$-algebra $\mathcal{H}$ is of finite type, involutory ($S_{\alpha^{-1}}S_{\alpha}=\id_{H_{\alpha}}$ for all $\alpha\in G$), and quasitriangular, we define an invariant of these connections. To the best of the author’s knowledge, Hopf $G$-algebras have not yet been used to construct invariants, making the present invariant a potentially new approach.

The construction takes the following steps: The algebraic data is a finite type involutory quasitriangular Hopf $G$-algebra $\mathcal{H}=\{H_\alpha\}_{\alpha\in G}$. For a flat $G$-connection $\rho$ on a 4-manifold $M$, we first calculate a presentation of the fundamental group of $M$ from its Kirby diagram. With this presentation and the connection $\rho$, we then color the dotted components of the Kirby diagram with elements of $G$. Using a 4-dimensional analog of the Hennings-type construction~\cite{hennings1996invariants,bobtcheva2003hkr}, we assign elements of $\mathcal{H}$ to the undotted components, contract these elements, and evaluate them by an integral of $H_1^*$. Applying an appropriate normalization factor, we obtain the invariant $I_\mathcal{H}(M,\rho)$.

One feature of our invariant is that it does not require the group $G$ to be either finite or abelian. However, the assumption that the Hopf $G$-algebra is involutory, which, in some sense, corresponds to semisimplicity (\cref{rem:involutorysemisimple}), is crucial. Also, the invariant is described in terms of the Hopf $G$-algebra, without using any representation theories or categorical concepts, which makes it easier to calculate. When the flat $G$-connection is trivial, our invariant coincides with B{\"a}renz and Barrett's generalized dichromatic invariant~\cite{barenz2018dichromatic} associated with the pivotal functor $\Rep(H_1)\to\Rep(D(H_1))$, where $\Rep(\cdot)$ denotes the category of finite dimensional representations and $D(H_1)$ is the Drinfeld double of $H_1$; hence it also recovers the Crane--Yetter invariant~\cite{crane1993categorical,crane1997state} for $\Rep(H_1)$.

The rest of the paper is organized as follows: In \cref{sec:hopfgrooupalgebra}, we give a review of Hopf algebras and Hopf group-algebras focusing on their similarities. In \cref{sec:coloredkirby}, we recall Kirby diagrams of 4-manifolds and explain how to obtain a presentation of the fundamental group from a Kirby diagram. Then we introduce $G$-colored Kirby diagrams of flat $G$-connections. Under these preparations, in \cref{sec:invariant}, we define the invariant and prove the invariance precisely. We calculate the invariants for several 4-manifolds in \cref{sec:example}.

\subsection*{Acknowledgements}
The author would like to thank Yuji Terashima for helpful discussions and advice. The author would also like to thank the anonymous referees for their valuable suggestions.

\section{Hopf group-algebras}\label[section]{sec:hopfgrooupalgebra}

In this section, we first briefly recall some fundamentals of Hopf algebras and list useful lemmas. Then we define Hopf group-algebras and derive their properties mainly from the theory of Hopf group-coalgebras. One will find that many of the notions in the theory of Hopf algebras can be extended to the setting of Hopf group-algebras. For more detailed treatments of Hopf algebras, see, for example, \cite{radford2012hopf} and of Hopf group-(co)algebras, see \cite{virelizier2002hopf,virelizier2005involutory,zhang2022new}.

Throughout the paper, all vector spaces are assumed to be over $\mathbb{C}$, and linear maps, dual spaces, tensor products, etc. are also unless otherwise stated. For two vector spaces $U$ and $V$, $\tau_{U,V}\colon U\otimes V\to V\otimes U;\;u\otimes v\mapsto v\otimes u$ denotes the flip map.

\subsection{Hopf algebras}

Let $H=(H,m,\eta,\Delta,\varepsilon,S)$ be a Hopf algebra. We use the following notation:

\begin{notation}
    We write the product and the unit as
    \begin{equation*}
        m(x\otimes y) = xy \quad(x,y\in H),\quad \eta(1) = 1_H.
    \end{equation*}
    For the coproduct, we use the Sweedler (sumless) notation
    \begin{equation*}
        \Delta(x) = x^{(1)}\otimes x^{(2)}\in H\otimes H \quad(x\in H),
    \end{equation*}
    and, by virtue of the coassociativity of the coproduct, extend it to
    \begin{equation*}
        \Delta^{(n-1)}(x)=x^{(1)}\otimes\cdots\otimes x^{(n)}\in H^{\otimes n}\quad(x\in H).
    \end{equation*}
\end{notation}

\subsubsection{Integrals}

In the theory of finite-dimensional Hopf algebras, integrals play an important role.

\begin{definition}
    Let $H$ be a Hopf algebra. A \emph{left} (resp. \emph{right}) \emph{integral} of $H$ is an element $\Lambda\in H$ that satisfies
    \begin{equation*}
        x\Lambda = \varepsilon(x)\Lambda\quad(\text{resp. } \Lambda x = \varepsilon(x)\Lambda)
    \end{equation*}
    for all $x\in H$. If $\Lambda$ is both a left and right integral, it is called a \emph{two-sided} integral.

    Dually, a \emph{left} (resp. \emph{right}) \emph{integral} of $H^*$ is an element $\lambda\in H^*$ that satisfies
    \begin{equation*}
        f\lambda = f(1_H)\lambda\quad (\text{resp. }\lambda f = f(1_H)\lambda)
    \end{equation*}
    for all $f\in H^*$ (in the dual Hopf algebra $H^*$), or equivalently
    \begin{equation*}
        (\id_H\otimes\lambda)(\Delta(x)) = \lambda(x)1_H\quad (\text{resp. }(\lambda\otimes\id_H)(\Delta(x)) = \lambda(x)1_H)
    \end{equation*}
    for all $x\in H$. If $\lambda$ is both a left and right integral, it is called a \emph{two-sided} integral.
\end{definition}

It is well-known that the space of left integrals and the space of right integrals of a finite-dimensional Hopf algebra are both one-dimensional~\cite[Theorem~10.2.2]{radford2012hopf}.

\begin{definition}
    A finite-dimensional Hopf algebra $H$ is said to be \emph{unimodular} if the space of left integrals and the space of right integrals of $H$ coincide.
\end{definition}

\subsubsection{Quasitriangular Hopf algebras}\label[section]{sssec:QHA}

\begin{definition}
    A \emph{quasitriangular} Hopf algebra $(H,R)$ is a pair, where $H$ is a Hopf algebra and $R\in H\otimes H$ is an invertible element, called the \emph{universal $R$-matrix}, that satisfies
    \begin{enumerate}[label=(Q\arabic*),leftmargin=*,ref=Q\arabic*]
        \item $(\Delta\otimes\id_H)(R) = R_{13}R_{23}$,\label{def:q1}
        \item $(\id_H\otimes\Delta)(R) = R_{13}R_{12}$,\label{def:q2}
        \item $R\Delta(x) = \Delta^{\text{cop}}(x)R$
    \end{enumerate}
    for all $x\in H$. Here $R_{12}\coloneq R\otimes 1_H,\;R_{23}\coloneq 1_H\otimes R,\;R_{13}\coloneq (\id_H\otimes\tau_{H,H})(R_{12})\in H^{\otimes 3}$ and $\Delta^{\text{cop}}\coloneq\tau_{H,H}\circ\Delta$.
\end{definition}

\begin{notation}
    The universal $R$-matrix is written as $R=\sum_i a_i\otimes b_i$ for some $a_i,b_i\in H$, which we abbreviate as $R=a_i\otimes b_i$.
\end{notation}

\begin{lemma}[{\cite[Theorem~12.2.8]{radford2012hopf}}]\label[lemma]{lem:proprmat}
    Let $(H,R)$ be a quasitriangular Hopf algebra. Then
    \begin{enumerate}[label=$(\mathrm{\alph*})$,leftmargin=*,ref=\alph*]
        \item $(\varepsilon\otimes\id_H)(R) = 1_H = (\id_H\otimes\varepsilon)(R)$.\label{lem:proprmata}
        \item $(S\otimes\id_H)(R) = R^{-1}$.
        \item $(S\otimes S)(R)=R$. \label{lem:proprmatc}
        \item (Quantum Yang--Baxter equation)  $R_{12}R_{13}R_{23} = R_{23}R_{13}R_{12}$.\label{lem:proprmatd}
    \end{enumerate}
\end{lemma}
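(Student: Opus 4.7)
The plan is to unpack each part by applying one layer of the quasitriangular axioms together with the counit/antipode identities for $H$, and then invoking the invertibility of $R$. For (a), I would apply $\varepsilon\otimes\id_H\otimes\id_H$ to both sides of (Q1): the counit axiom $(\varepsilon\otimes\id_H)\circ\Delta=\id_H$ collapses the left-hand side to $R$, while, since $\varepsilon\otimes\id_H\otimes\id_H$ is an algebra homomorphism, the right-hand side factors as $(1_H\otimes c)\cdot R$ with $c\coloneq(\varepsilon\otimes\id_H)(R)$; invertibility of $R$ then forces $c=1_H$, and the second identity follows by the mirror argument from (Q2). For (b), I would apply $(m\circ(\id_H\otimes S))\otimes\id_H\colon H^{\otimes 3}\to H\otimes H$ to both sides of (Q1). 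The antipode axiom $m\circ(\id_H\otimes S)\circ\Delta=\eta\circ\varepsilon$ combined with (a) collapses the left-hand side to $1_H\otimes 1_H$. Expanding $R_{13}R_{23}=a_i\otimes a_j\otimes b_ib_j$ and applying the map yields $\sum a_iS(a_j)\otimes b_ib_j$, which factors as $R\cdot(S\otimes\id_H)(R)$, so $(S\otimes\id_H)(R)=R^{-1}$.

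For (c), I would mimic the proof of (b) starting from (Q2), using the dual antipode identity $m\circ(\id_H\otimes S^{-1})\circ\Delta^{\mathrm{op}}=\eta\circ\varepsilon$. This identity is obtained by applying the anti-multiplicative map $S^{-1}$ to the usual antipode axiom, and it presupposes that $S$ is bijective — which is automatic in the finite-dimensional setting that the paper is ultimately concerned with. Running the analogous contraction on (Q2) gives $(\id_H\otimes S^{-1})(R)=R^{-1}$; combining with (b) yields $(S\otimes\id_H)(R)=(\id_H\otimes S^{-1})(R)$, and applying $\id_H\otimes S$ to both sides then gives $(S\otimes S)(R)=R$.

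For (d), I would multiply (Q1) on the left by $R_{12}$. Since $(\Delta\otimes\id_H)(R)$ lies in the image of $\Delta\otimes\id_H$, axiom (Q3) applied in positions $1,2$ commutes $R_{12}$ past it to produce $(\Delta^{\mathrm{op}}\otimes\id_H)(R)\cdot R_{12}$. Applying $\tau_{12}$ to (Q1) shows $(\Delta^{\mathrm{op}}\otimes\id_H)(R)=R_{23}R_{13}$, so the equation becomes $R_{12}R_{13}R_{23}=R_{23}R_{13}R_{12}$. The hard part will be (c): identifying the correct contraction map on (Q2) so that the right-hand side factors cleanly as a product in $H\otimes H$ requires careful bookkeeping of the multiplication order, and the whole argument hinges on $S$ being invertible, which one must justify separately.
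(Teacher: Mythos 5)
Your argument is correct, and since the paper does not prove this lemma but cites it to Radford's Theorem~12.2.8, the relevant comparison is with that standard proof. Parts (a), (b), and (d) are exactly the textbook route: contracting (Q1)/(Q2) with the counit, contracting (Q1) with $m\circ(\id_H\otimes S)$ on the first two legs, and pushing $R_{12}$ through $(\Delta\otimes\id_H)(R)$ via (Q3) together with $(\Delta^{\mathrm{cop}}\otimes\id_H)(R)=R_{23}R_{13}$. Your part (c) is where you genuinely diverge: the usual proof avoids $S^{-1}$ entirely by first noting that $R^{-1}$ satisfies $(\id_H\otimes\Delta)(R^{-1})=(R^{-1})_{12}(R^{-1})_{13}$ (invert (Q2)), then contracting the last two legs with $m\circ(\id_H\otimes S)$ to get $(\id_H\otimes S)(R^{-1})=R$, and composing with (b) to conclude $(S\otimes S)(R)=(\id_H\otimes S)(R^{-1})=R$. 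Your version instead contracts (Q2) with $y\otimes z\mapsto zS^{-1}(y)$, which works and gives $(\id_H\otimes S^{-1})(R)=R^{-1}$, but at the cost of presupposing that $S$ is bijective. As you note, this is harmless here --- in the finite-dimensional setting of the paper $S$ is bijective by the theory of integrals, and even in general the Drinfeld element gives $S^2(x)=uxu^{-1}$ (proved from (a), (b) and the axioms alone, so no circularity), whence $S$ is bijective --- but the $R^{-1}$ route removes the hypothesis altogether and is what the cited reference does. Everything else checks out, including the factorizations $a_iS(a_j)\otimes b_ib_j=R\cdot(S\otimes\id_H)(R)$ and $(1_H\otimes c)R=R$.
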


For a quasitriangular Hopf algebra $(H,R)$, we define the \emph{Drinfeld element} $u\in H$ by
\begin{equation*}
    u \coloneqq (m\circ(S\otimes\id_H)\circ\tau_{H,H})(R) = S(b_i)a_i \in H.
\end{equation*}

\begin{lemma}[{\cite[Theorem~12.2.8, 12.3.2]{radford2012hopf}}]\label[lemma]{lem:drinfeldprop}
    Let $(H,R)$ be a quasitriangular Hopf algebra and $u\in H$ be its Drinfeld element. Then
    \begin{enumerate}[label=$(\mathrm{\alph*})$,leftmargin=*,ref=\alph*]
        \item $u$ is invertible with $u^{-1}=b_iS^2(a_i)$. \label{lem:drinfeldpropa}
        \item $S^2(x)=uxu^{-1}$ for all $x\in H$. \label{lem:drinfeldpropb}
        \item $uS(u)=S(u)u$ and this element belongs to the center of $H$.
        \item $\Delta(u)=(u\otimes u)(\tau_{H,H}(R)R)^{-1}=(\tau_{H,H}(R)R)^{-1}(u\otimes u)$.\label{lem:drinfeldpropd}
        \item $\varepsilon(u)=1$.\label{lem:drinfeldprope}
    \end{enumerate}
\end{lemma}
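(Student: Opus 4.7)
The plan is to deduce all five properties from the defining relations (Q1)--(Q3) together with \cref{lem:proprmat}, by direct manipulation in Sweedler notation. I would dispose of (e) first, as it is immediate: using $\varepsilon\circ S = \varepsilon$ and \cref{lem:proprmata}, $\varepsilon(u) = \varepsilon(S(b_i))\varepsilon(a_i) = (\varepsilon\otimes\varepsilon)(R) = \varepsilon(1_H) = 1$.

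Properties (a) and (b) form the technical heart and are naturally proved together. The idea for (b) is to start from (Q3) in the form $a_i x^{(1)}\otimes b_i x^{(2)} = x^{(2)} a_i\otimes x^{(1)} b_i$, use (Q1) or (Q2) to expand one copy of $R$ into two copies, and apply $S$ to suitable tensor factors so that the antipode axiom $S(x^{(1)})x^{(2)} = \varepsilon(x)1_H$ collapses the Sweedler indices; the resulting identity is $ux = S^2(x) u$. For (a), I set $v = b_i S^2(a_i)$ and compute $uv$ directly by expanding two copies of $R$; the reduction to $1_H$ uses (Q2), \cref{lem:proprmatc}, and the antipode axiom, with $vu = 1_H$ proved symmetrically. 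Once $u^{-1} = v$ is in hand, $ux = S^2(x) u$ becomes the statement $S^2(x) = u x u^{-1}$ of (b).

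For (d), I would expand $\Delta(u) = \Delta(S(b_i))\Delta(a_i)$ using the anti-comultiplicativity $\Delta\circ S = (S\otimes S)\circ\Delta^{\text{cop}}$ together with (Q1) and (Q2); this produces terms involving three copies of $R$, which the quantum Yang--Baxter equation (\cref{lem:proprmatd}) reorders into the form $(u\otimes u)(\tau_{H,H}(R)R)^{-1}$. The second equality in (d) then follows because (b) implies $u\otimes u$ intertwines $\Delta$ and $\Delta^{\text{cop}}$, and hence commutes with $\tau_{H,H}(R)R$. For (c), applying $S$ to $S^2(x)u = ux$ and rewriting $S^3(x)$ via (b) yields the identity $S(u)\cdot S^2(y) = y\cdot S(u)$ for all $y\in H$. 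Setting $y = u$ and using $S^2(u) = u$ (which is (b) applied at $x = u$) gives $S(u)u = uS(u)$; combining the same identity with $S^2(y) = uyu^{-1}$ then shows that $S(u)u$ commutes with every $y\in H$, so it is central.

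The main obstacle is the bookkeeping in (b) and (d): several copies of $R$ must be shuffled and the order of factors carefully tracked as the anti-algebra map $S$ and the flip $\tau_{H,H}$ are applied. No step requires ideas outside of the axioms themselves, but the derivation of (b) is the most delicate, since the squared-antipode identity $S^2(x) = u x u^{-1}$ is essentially the content of this lemma.
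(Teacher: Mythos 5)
This lemma is not proved in the paper at all: it is imported verbatim from Radford's book (Theorems~12.2.8 and~12.3.2), so there is no in-paper argument to compare against. Your sketch is essentially a reconstruction of that standard textbook proof (going back to Drinfeld), and the overall plan is sound: (e) is immediate from $\varepsilon\circ S=\varepsilon$ and \cref{lem:proprmat}~(\ref{lem:proprmata}); (a) and (b) are the technical core; (d) is the long computation with three copies of $R$ reordered by the quantum Yang--Baxter equation; and your derivation of (c) from $S(u)S^{2}(y)=yS(u)$ combined with $uy=S^{2}(y)u$ is exactly right. Two small calibration points. First, for (b) you do not need to expand a copy of $R$ via (\ref{def:q1}) or (\ref{def:q2}): applying the map $y\otimes z\mapsto S(z)y$ to (Q3) in the form $a_ix^{(1)}\otimes b_ix^{(2)}=x^{(2)}a_i\otimes x^{(1)}b_i$ already yields $S(x^{(2)})\,u\,x^{(1)}=\varepsilon(x)u$, and coassociativity plus the antipode axiom then give $S^{2}(x)u=ux$; the axioms (\ref{def:q1})/(\ref{def:q2}) only enter in (a) (to collapse $b_iS(b_j)a_ja_i$ to $\varepsilon(b_i)a_i=1_H$) and in (d). Second, since $H$ is not assumed finite-dimensional, a one-sided inverse does not automatically give a two-sided one, so your remark that $uv=1_H$ and $vu=1_H$ are each verified is not a redundancy but a necessary part of (a). With those caveats, the proposal is correct in outline, though it remains a plan rather than a worked verification of the Sweedler bookkeeping in (a), (b), and (d).
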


The next lemma is a direct consequence of conditions (\ref{def:q1}) and (\ref{def:q2}), and it is used several times in proving the invariance of the invariant:

\begin{lemma}\label[lemma]{lem:equatrmat}
    Let $(H,R)$ be a quasitriangular Hopf algebra and $n$ be a positive integer. Then
    \begin{enumerate}[label=$(\mathrm{\alph*})$,leftmargin=*,ref=\alph*]
	    \item $a_i^{(1)}\otimes\cdots\otimes a_i^{(n)}\otimes b_{i} = a_{i_1}\otimes\cdots\otimes a_{i_n}\otimes b_{i_1}\cdots b_{i_n}$.\label{lem:equatrmata}
        \item $a_{i}\otimes b_i^{(1)}\otimes\cdots\otimes b_i^{(n)} = a_{i_n}\cdots a_{i_1}\otimes b_{i_1}\otimes\cdots\otimes b_{i_n}$.\label{lem:equatrmatb}
    \end{enumerate}
    Here $R=a_i\otimes b_i=a_{i_1}\otimes b_{i_1}=\cdots=a_{i_n}\otimes b_{i_n}$.
\end{lemma}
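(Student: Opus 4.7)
The plan is to prove both (a) and (b) by induction on $n$, using (\ref{def:q1}) and (\ref{def:q2}) respectively, together with coassociativity. The base case $n=1$ is trivial in each case: both sides reduce to $R = a_i \otimes b_i$. For $n=2$, the statements (a) and (b) are literally the defining axioms (\ref{def:q1}) and (\ref{def:q2}), after rewriting $R_{13}R_{23} = a_{i_1} \otimes a_{i_2} \otimes b_{i_1} b_{i_2}$ and $R_{13}R_{12} = a_{i_2} a_{i_1} \otimes b_{i_1} \otimes b_{i_2}$.

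For the inductive step of (a), I would use the recursive definition $\Delta^{(n)} = (\id_H^{\otimes (n-1)} \otimes \Delta) \circ \Delta^{(n-1)}$, which by coassociativity gives
\begin{equation*}
a_i^{(1)}\otimes\cdots\otimes a_i^{(n+1)}\otimes b_i \;=\; a_i^{(1)}\otimes\cdots\otimes a_i^{(n-1)}\otimes \Delta(a_i^{(n)})\otimes b_i.
\end{equation*}
Applying the induction hypothesis in the first $n$ tensor slots (treating the tensor factor $b_i$ as unaffected), this becomes $a_{i_1}\otimes\cdots\otimes a_{i_{n-1}}\otimes \Delta(a_{i_n})\otimes b_{i_1}\cdots b_{i_n}$. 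The key move is then to apply the $n=2$ case (equivalently, (\ref{def:q1})) to the local piece $\Delta(a_{i_n}) \otimes b_{i_n}$ sitting in the $n$-th and $(n+1)$-st slots (with $b_{i_n}$ the right-most factor of the product $b_{i_1}\cdots b_{i_n}$). This substitution is justified by viewing the operation as post-composition of $(\Delta\otimes\id_H)(R) = R_{13}R_{23}$ with a fixed linear map that inserts $a_{i_1}\otimes\cdots\otimes a_{i_{n-1}}$ and left-multiplies the $b$-slot by $b_{i_1}\cdots b_{i_{n-1}}$. After relabeling the summation indices, this produces exactly $a_{i_1}\otimes\cdots\otimes a_{i_{n+1}}\otimes b_{i_1}\cdots b_{i_{n+1}}$.

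The argument for (b) is entirely parallel: expand $\Delta^{(n)}(b_i)$ using $\Delta^{(n)} = (\Delta \otimes \id_H^{\otimes (n-1)})\circ \Delta^{(n-1)}$, apply the induction hypothesis, and then invoke (\ref{def:q2}) in the form $a \otimes \Delta(b) = a_j a_k \otimes b_k \otimes b_j$ to split the relevant $\Delta(b_{i_1})$ factor. The reversed order of the $a$-product on the right-hand side of (b) is precisely the reversal built into $R_{13}R_{12}$ compared with $R_{13}R_{23}$. I do not expect any genuine obstacle here; the only mild care required is bookkeeping to ensure that the linear map used to transport the $n=2$ identity into the $n+1$ setting acts on the correct pair of tensor factors and respects the left/right placement of the product of $b$'s (for (a)) or $a$'s (for (b)).
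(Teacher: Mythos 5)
Your proposal is correct and follows essentially the same route as the paper: induction on $n$ with base cases $n=1,2$ given by the axioms (\ref{def:q1}) and (\ref{def:q2}), then using coassociativity to isolate one coproduct factor and applying the $n=2$ identity locally inside the larger tensor. The only cosmetic difference is which tensor slot you choose to expand, which coassociativity renders immaterial.
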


\begin{proof}
    We prove part (\ref{lem:equatrmata}) by induction on $n$. For $n=1$ it is trivial, and for $n=2$ it is nothing but (\ref{def:q1}). Then for $n\geq3$, we have
    \begin{align*}
	\text{(LHS)}={}& a_i^{(1)}\otimes\cdots\otimes a_i^{(n-2)}\otimes\Delta(a_i^{(n-1)})\otimes b_i\\
	={}& a_{i_1}\otimes\cdots\otimes a_{i_{n-2}}\otimes\Delta(a_{i_{n-1}})\otimes b_{i_1}\cdots b_{i_{n-2}}b_{i_{n-1}} && \text{(assumption)}\\
	={}& a_{i_1}\otimes\cdots\otimes a_{i_{n-2}}\otimes a_{i_{n-1}}\otimes a_{i_n}\otimes b_{i_1}\cdots b_{i_{n-2}}b_{i_{n-1}}b_{i_n} && \text{(\ref{def:q1})}\\
	={}& \text{(RHS)}.
    \end{align*}
    Part (\ref{lem:equatrmatb}) is proved similarly by using (\ref{def:q2}).
\end{proof}

\begin{definition}
    A \emph{ribbon} Hopf algebra $(H,R,v)$ is a triple, where $(H,R)$ is a quasitriangular Hopf algebra and $v\in H$ is a central element, called the \emph{ribbon element}, that satisfies
    \begin{enumerate}[label=(R\arabic*),leftmargin=*,ref=R\arabic*]
        \item $v^2 = uS(u)$,\label{def:r1}
        \item $S(v) = v$,\label{def:r2}
        \item $\Delta(v) = (v\otimes v)(\tau_{H,H}(R)R)^{-1}$,\label{def:r3}
        \item $\varepsilon(v) = 1$.\label{def:r4}
    \end{enumerate}
\end{definition}

\subsubsection{Involutory Hopf algebras}\label[subsection]{sssec:invoha}

The property of being involutory leads to several powerful results.

\begin{definition}
    A Hopf algebra $H$ is said to be \emph{involutory} if the square of the antipode is the identity, i.e., $S^2=\id_H$.
\end{definition}

\begin{remark}\label[remark]{rem:involutorysemisimple}
    It is known that when a Hopf algebra $H$ is finite-dimensional, $H$ is involutory if and only if $H$ is semisimple~\cite[Theorem~16.1.2]{radford2012hopf}.
\end{remark}

\begin{lemma}[{\cite[Corollary~10.3.3]{radford2012hopf}}]
    A finite-dimensional involutory Hopf algebra is unimodular.
\end{lemma}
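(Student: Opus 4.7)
My plan is to derive unimodularity by combining the Larson--Radford theorem quoted in \cref{rem:involutorysemisimple} with the standard Maschke-type criterion for finite-dimensional Hopf algebras: $H$ is semisimple if and only if $\varepsilon(\Lambda)\neq 0$ for some (equivalently, every) nonzero left integral $\Lambda\in H$. Under our hypotheses $H$ is involutory, hence semisimple, so I may fix a nonzero left integral $\Lambda$ with $\varepsilon(\Lambda)\neq 0$.

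Next I would exploit the action of right multiplication on the one-dimensional space of left integrals. For any $x,y\in H$ we have
\[
    y(\Lambda x)=(y\Lambda)x=\varepsilon(y)\Lambda x,
\]
so $\Lambda x$ is again a left integral, and therefore there is a unique scalar $\alpha(x)\in\mathbb{C}$ with $\Lambda x=\alpha(x)\Lambda$. A short check using associativity and $\Lambda\cdot 1_H=\Lambda$ shows that $\alpha\colon H\to\mathbb{C}$ is a unital algebra homomorphism (the so-called modular character of $H$).

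To conclude, I would apply $\varepsilon$ to both sides of $\Lambda x=\alpha(x)\Lambda$, obtaining $\varepsilon(\Lambda)\varepsilon(x)=\alpha(x)\varepsilon(\Lambda)$. Since $\varepsilon(\Lambda)\neq 0$, this forces $\alpha=\varepsilon$, so $\Lambda x=\varepsilon(x)\Lambda$ for every $x\in H$, i.e., $\Lambda$ is also a right integral. As both the spaces of left and right integrals are one-dimensional and both contain the nonzero element $\Lambda$, they coincide, proving that $H$ is unimodular.

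The only real obstacle is the dependence on the Maschke-type criterion $\varepsilon(\Lambda)\neq 0\iff H$ is semisimple; if one does not wish to take it as known, it can be cited from \cite{radford2012hopf} or derived in a few lines by constructing a projection onto the trivial submodule of $H$ using any left integral with $\varepsilon(\Lambda)=1$. Once that step is granted, no serious calculation remains beyond the one-line computation with the counit.
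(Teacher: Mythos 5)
Your argument is correct. The paper itself does not prove this lemma---it simply cites Corollary~10.3.3 of Radford's book---so any written-out proof is necessarily a different route; yours is the standard one and it is sound. The chain is: involutory implies semisimple by the Larson--Radford theorem (already quoted in \cref{rem:involutorysemisimple}); semisimple implies $\varepsilon(\Lambda)\neq 0$ by the Maschke-type theorem for Hopf algebras; and the one-dimensionality of the space of left integrals makes $x\mapsto\alpha(x)$, defined by $\Lambda x=\alpha(x)\Lambda$, a well-defined functional that the counit computation pins down to $\varepsilon$, so $\Lambda$ is two-sided. Two small remarks: you do not actually need $\alpha$ to be an algebra homomorphism---well-definedness alone suffices for the counit step---and the two external inputs you rely on (the Maschke criterion and Larson--Radford) are consistent with what the paper already assumes over $\mathbb{C}$, so nothing circular is introduced. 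A slightly more economical way to reach $\varepsilon(\Lambda)\neq 0$, bypassing semisimplicity as such, is the trace formula $\mathrm{tr}(S^2)=\varepsilon(\Lambda)\lambda(1_H)$ for dual integrals normalized by $\lambda(\Lambda)=1$: when $S^2=\id_H$ the left-hand side equals $\dim(H)\neq 0$ in characteristic zero, which forces $\varepsilon(\Lambda)\neq 0$ directly and then your final computation applies verbatim.
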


\begin{lemma}[{\cite[Proposition~10.4.2, Theorem~10.5.4]{radford2012hopf}}]\label[lemma]{lem:ordinvoequation}
    Let $H$ be a finite-dimensional involutory Hopf algebra. Let $\Lambda\in H$ and $\lambda\in H^*$ be (two-sided) integrals such that $\lambda(\Lambda)=1$. Then
    \begin{enumerate}[label=$(\mathrm{\alph*})$,leftmargin=*,ref=\alph*]
        \item $\dim(H) = \lambda(1)\varepsilon(\Lambda)$.\label{lem:ordinvoequationa}
        \item $\Lambda^{(1)}\otimes\Lambda^{(2)} = \Lambda^{(2)}\otimes\Lambda^{(1)}$.
        \item $S(\Lambda)=\Lambda$.\label{lem:ordinvoequationc}
        \item $\lambda(xy) = \lambda(yx)$ for all $x,y\in H$.
        \item $\lambda\circ S=\lambda$.\label{lem:ordinvoequatione}
    \end{enumerate}
\end{lemma}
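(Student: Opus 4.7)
The plan is to tackle the five assertions in an order that lets the easy one-dimensional arguments feed the harder trace and Frobenius computations. As a standing set-up, involutoriness together with finite-dimensionality makes $H$ semisimple (so $\varepsilon(\Lambda)\neq 0$ by Maschke's theorem for Hopf algebras) and, by the preceding lemma, unimodular, so the space of two-sided integrals in $H$ is one-dimensional. Moreover $H^*$ is itself finite-dimensional and involutory, since $(S^*)^2=(S^2)^*=\id$, so two-sided integrals in $H^*$ form a one-dimensional space as well.

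First I would dispose of (c) and (e) by a uniqueness-up-to-scalar argument. Because $S$ is anti-algebra and anti-coalgebra, $S(\Lambda)$ is again a two-sided integral of $H$, hence $S(\Lambda)=c\Lambda$ for some scalar $c$; applying $S$ once more and using $S^2=\id$ gives $c^2=1$, and then applying $\varepsilon$ together with $\varepsilon\circ S=\varepsilon$ and $\varepsilon(\Lambda)\neq 0$ forces $c=1$. The dual argument inside $H^*$ shows that $\lambda\circ S=d\lambda$ for some $d$ with $d^2=1$; evaluating at $\Lambda$ gives $d=\lambda(S(\Lambda))/\lambda(\Lambda)=\lambda(\Lambda)/1=1$ by (c), so $\lambda\circ S=\lambda$.

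For (d) and (a) I would invoke Radford's machinery. Radford's general formula for an integral of $H^*$ in a finite-dimensional Hopf algebra reads $\lambda(xy)=\lambda(yS^2(x)g^{-1})$, where $g$ is the distinguished grouplike of $H$; unimodularity gives $g=1_H$ and involutoriness gives $S^2=\id$, so (d) follows immediately. For (a) I would compute $\operatorname{tr}(S^2\colon H\to H)$ in two ways: trivially it equals $\dim H$, and by Radford's trace formula (set up via the Frobenius pairing $(x,y)\mapsto\lambda(xy)$ on $H$, whose dual bases are read off from $\Delta(\Lambda)$ using the Larson--Sweedler isomorphism under the normalization $\lambda(\Lambda)=1$) it equals $\varepsilon(\Lambda)\lambda(1_H)$. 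The genuine technical hurdle, and the step I expect to be the main obstacle, is this trace computation: keeping track of the correct scalars in the Larson--Sweedler setup requires precise bookkeeping of dual bases.

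Finally, for (b) I would exhibit $\Delta(\Lambda)$ as the Casimir of a symmetric Frobenius form. Set $\phi(x,y):=\lambda(xS(y))$. Then (d) gives $\phi(x,y)=\lambda(S(y)x)$, and applying (e) together with $S^2=\id$ rewrites this as $\lambda(S(x)y)=\phi(y,x)$, so $\phi$ is symmetric. A short calculation using Larson--Sweedler shows $x=\sum\Lambda^{(1)}\phi(x,\Lambda^{(2)})$ for every $x\in H$, identifying $\Delta(\Lambda)$ with the canonical Casimir element of the Frobenius algebra $(H,\phi)$. Since the Casimir of a symmetric Frobenius form is invariant under the flip, we conclude $\Lambda^{(1)}\otimes\Lambda^{(2)}=\Lambda^{(2)}\otimes\Lambda^{(1)}$, which is (b).
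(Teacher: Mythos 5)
The paper does not prove this lemma at all: it is quoted directly from Radford's book (Proposition~10.4.2 and Theorem~10.5.4), so there is no in-paper argument to compare against. Your outline is a correct reconstruction of the standard proofs. The uniqueness-of-integrals argument for (c) and (e) works as stated, with $\varepsilon(\Lambda)\neq 0$ supplied by the Larson--Radford theorem that the paper itself quotes in \cref{rem:involutorysemisimple} (alternatively one can get $\varepsilon(\Lambda)\neq0$ from (a) and avoid invoking semisimplicity). For (d) and (a), your appeal to Radford's formula and the trace identity $\operatorname{tr}(S^2)=\varepsilon(\Lambda)\lambda(1)$ is the intended route; note only that the full formula for $\lambda(xy)$ involves the distinguished grouplikes of \emph{both} $H$ and $H^*$, and you need unimodularity of $H^*$ as well as of $H$ to kill them --- which you do get, since $H^*$ is also finite-dimensional involutory, but this deserves an explicit word. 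The Casimir argument for (b) is sound: the identity $x=\Lambda^{(1)}\lambda\bigl(xS(\Lambda^{(2)})\bigr)$ together with the symmetry of $\phi(x,y)=\lambda(xS(y))$ (which follows from (d), (e), and $S^2=\id$) forces $\Delta(\Lambda)$ to be flip-invariant, since the inverse tensor of a symmetric nondegenerate form is symmetric. The two steps you flag yourself --- the Larson--Sweedler identity and the normalization bookkeeping in the trace formula --- are exactly the content of the cited sections of Radford, so nothing essential is missing.
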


\begin{lemma}\label[lemma]{lem:invoribbon}
    Let $(H,R)$ be a finite-dimensional involutory quasitriangular Hopf algebra and $u\in H$ be its Drinfeld element. Then $S(u)=u$ and $(H,R,u)$ forms a ribbon Hopf algebra.
\end{lemma}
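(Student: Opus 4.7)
The strategy is to establish that $v := u$ itself is a ribbon element. Because $H$ is involutory ($S^2 = \id_H$), \cref{lem:drinfeldpropb} forces $x = S^2(x) = uxu^{-1}$ for all $x$, so $u \in Z(H)$ to begin with. Axioms (R3) and (R4) for the candidate $v = u$ are then exactly the content of \cref{lem:drinfeldpropd,lem:drinfeldprope}, while (R1) $v^2 = uS(u)$ and (R2) $S(v) = v$ both reduce to the single identity $S(u) = u$. The entire proof therefore hinges on establishing $S(u) = u$.

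For this, I would introduce the element $\mathfrak{g} := u^{-1} S(u) \in H$ and exhibit it as a central group-like element. Centrality is clear from centrality of $u$ and of $S(u)$ (the latter holding because $S$ preserves the center). To check group-likeness, apply $\Delta \circ S = \tau_{H,H} \circ (S \otimes S) \circ \Delta$ to the formula $\Delta(u) = (u \otimes u)(\tau_{H,H}(R)R)^{-1}$ of \cref{lem:drinfeldpropd}; the identity $(S \otimes S)(R) = R$ from \cref{lem:proprmatc} then yields $\Delta(S(u)) = (S(u) \otimes S(u))(\tau_{H,H}(R)R)^{-1}$, from which $\Delta(\mathfrak{g}) = \mathfrak{g} \otimes \mathfrak{g}$ follows after combining with the same formula for $\Delta(u)$ and using that $u \otimes u$ commutes with $(\tau_{H,H}(R)R)^{-1}$ (by centrality of $u$). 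The counit condition $\varepsilon(\mathfrak{g}) = 1$ is immediate from \cref{lem:drinfeldprope} and $\varepsilon \circ S = \varepsilon$.

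The main obstacle is then proving $\mathfrak{g} = 1_H$. This is the delicate step because, in general, central group-like elements of a semisimple Hopf algebra need not be trivial (as any abelian group algebra illustrates). My plan is to invoke the structure theory: by \cref{rem:involutorysemisimple} $H$ is semisimple, and the Larson--Radford theorem then makes $H^*$ semisimple as well, hence both $H$ and $H^*$ are unimodular. Combined with Drinfeld's identification of $u^{-1}S(u)$ with the distinguished group-like element of $H$ featuring in Radford's $S^4$-formula, together with the fact that this distinguished group-like is trivial precisely when $H^*$ is unimodular, this forces $\mathfrak{g} = 1_H$, giving $S(u) = u$. The ribbon axioms then follow at once with $v = u$.
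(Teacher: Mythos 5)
Your proposal is correct, and it follows the paper's proof everywhere except at the one step the paper does not argue at all: the identity $S(u)=u$. The paper disposes of that claim with a single citation to \cite[Proposition~12.3.3]{radford2012hopf}, and then, exactly as you do, deduces centrality of $u$ from \cref{lem:drinfeldprop}~(\ref{lem:drinfeldpropb}) together with $S^2=\id_H$, gets (\ref{def:r1}) and (\ref{def:r2}) from $S(u)=u$, and reads off (\ref{def:r3}) and (\ref{def:r4}) from \cref{lem:drinfeldprop}~(\ref{lem:drinfeldpropd}) and (\ref{lem:drinfeldprope}). Your replacement argument for $S(u)=u$ --- exhibiting $\mathfrak{g}=u^{-1}S(u)$ as a central group-like element (the computation $\Delta(S(u))=(S(u)\otimes S(u))(\tau_{H,H}(R)R)^{-1}$ via $(S\otimes S)(R)=R$ does go through) and then killing it using Larson--Radford semisimplicity of $H$ and $H^*$ and the resulting unimodularity of both --- is a standard and valid route, and you are right that the delicate point is precisely that central group-likes need not be trivial. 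One caveat on phrasing: $uS(u)^{-1}$ is not unconditionally ``the distinguished group-like element of $H$''; Radford's formula expresses it as the product of the distinguished group-like $g$ of $H$ with a correction term built from the distinguished group-like $\alpha$ of $H^*$ and $R$, and it is the unimodularity of $H$ that trivializes the correction term while the unimodularity of $H^*$ forces $g=1_H$. Since your argument has both unimodularities available, the conclusion $\mathfrak{g}=1_H$ stands, but the identification should be stated conditionally. The trade-off between the two approaches is that yours is self-contained modulo standard structure theory but leans essentially on finite-dimensionality and characteristic zero, whereas the paper keeps the proof short by outsourcing $S(u)=u$ entirely to the literature; for the lemma as stated (finite-dimensional, over $\mathbb{C}$) your extra hypotheses cost nothing.
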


\begin{proof}
    The first claim follows from \cite[Proposition~12.3.3]{radford2012hopf}.

    For the second claim, \cref{lem:drinfeldprop}~(\ref{lem:drinfeldpropb}) and $S^2=\id_H$ imply that $u$ is central. Conditions (\ref{def:r1}) and (\ref{def:r2}) are immediate from the first claim. Finally, (\ref{def:r3}) and (\ref{def:r4}) follow from \cref{lem:drinfeldprop}~(\ref{lem:drinfeldpropd}) and (\ref{lem:drinfeldprope}), respectively.
\end{proof}

\subsection{Hopf group-algebras}
In this section, we define Hopf group-algebras and study their properties. Let $G$ be a group with the identity element $1=1_G\in G$. 

\begin{definition}\label[definition]{def:hga}
    A \emph{Hopf $G$-algebra} $\mathcal{H}=(\{H_\alpha\}_{\alpha\in G},m,\eta,\Delta,\varepsilon,S)$ consists of  a family $\{H_\alpha\}_{\alpha\in G}$ of vector spaces and five families of linear maps
    \begin{itemize}
        \item product $m = \{m_{\alpha,\beta}\colon H_\alpha\otimes H_\beta\to H_{\alpha\beta}\}_{\alpha,\beta\in G}$,
        \item unit $\eta\colon\mathbb{C}\to H_1$,
        \item coproduct $\Delta = \{\Delta_\alpha\colon H_\alpha\to H_\alpha\otimes H_\alpha\}_{\alpha\in{G}}$,
        \item counit $\varepsilon = \{\varepsilon_\alpha\colon H_\alpha\to\mathbb{C}\}_{\alpha\in G}$,
        \item antipode $S = \{S_\alpha\colon H_\alpha\to H_{\alpha^{-1}}\}_{\alpha\in{G}}$
    \end{itemize}
    that satisfy the following conditions:
    \begin{enumerate}[label=(HG\arabic*),leftmargin=*,ref=HG\arabic*]
        \item $m_{\alpha\beta,\gamma}\circ(m_{\alpha,\beta}\otimes\id_{H_\gamma}) = m_{\alpha,\beta\gamma}\circ(\id_{H_\alpha}\otimes m_{\beta,\gamma})$,
        \item $m_{1,\alpha}\circ(\eta\otimes\id_{H_\alpha}) = \id_{H_\alpha} = m_{\alpha,1}\circ(\id_{H_\alpha}\otimes\eta)$,
        \item $(\Delta_\alpha\otimes\id_{H_\alpha})\circ \Delta_\alpha = (\id_{H_\alpha}\otimes\Delta_\alpha)\circ \Delta_\alpha$,\label{def:hga3}
        \item $(\varepsilon_\alpha\otimes\id_{H_\alpha})\circ \Delta_\alpha = \id_{H_\alpha} = (\id_{H_\alpha}\otimes\varepsilon_\alpha)\circ\Delta_\alpha$,\label{def:hga4}
        \item $\Delta_{\alpha\beta}\circ m_{\alpha,\beta} = (m_{\alpha,\beta}\otimes m_{\alpha,\beta})\circ(\id_{H_\alpha}\otimes\tau_{H_\alpha,H_\beta}\otimes\id_{H_\beta})\circ(\Delta_\alpha\otimes\Delta_\beta)$,
        \item $\varepsilon_{\alpha\beta}\circ m_{\alpha,\beta} = \varepsilon_\alpha\otimes\varepsilon_\beta$,\label{def:hga6}
        \item $\Delta_1\circ\eta = \eta\otimes\eta$,
        \item $\varepsilon_1\circ\eta = \id_{\mathbb{C}}$,\label{def:hga8}
        \item $m_{\alpha^{-1},\alpha}\circ(S_\alpha\otimes\id_{H_\alpha})\circ\Delta_\alpha = \eta\varepsilon_\alpha = m_{\alpha,\alpha^{-1}}\circ(\id_{H_\alpha}\otimes S_\alpha)\circ\Delta_\alpha$\label{def:hga9}
    \end{enumerate}
    for all $\alpha,\beta,\gamma\in G$.

    A Hopf $G$-algebra $\mathcal{H}=\{H_\alpha\}_{\alpha\in G}$ is said to be of \emph{finite type} if $H_\alpha$ is finite-dimensional for all $\alpha\in G$.
\end{definition}

\begin{remark}\label[remark]{rem:hgaremarkhga}
    If $G$ is trivial, the definition reduces to that of an ordinary Hopf algebra. In this sense, Hopf $G$-algebras can be regarded as generalizations of Hopf algebras. Also, $(H_\alpha,\Delta_{\alpha,\alpha},\varepsilon_\alpha)$ is a coalgebra for all $\alpha\in G$, and $(H_1,m_1,\eta,\Delta_{1,1},\varepsilon_1,S_1)$ is a Hopf algebra.
\end{remark}

\begin{notation}
    Similar to the case of Hopf algebras, we use the following notation for the product, the unit, and the coproduct of $\mathcal{H}$:
    \begin{equation*}
        m_{\alpha,\beta}(x\otimes y) = xy,\quad \eta(1)=1_{H_1},\quad \Delta_\alpha(x) = x^{(1)}\otimes x^{(2)} \quad(\alpha,\beta\in G,x\in H_\alpha, y\in H_\beta).
    \end{equation*}
\end{notation}

The following lemma states the basic properties of the antipode:

\begin{lemma}[{\cite[Section~2.3]{zhang2022new}}]\label[lemma]{lem:prophga}
    Let $\mathcal{H}$ be a  Hopf $G$-algebra. Then
    \begin{enumerate}[label=$(\mathrm{\alph*})$,leftmargin=*,ref=\alph*]
        \item $S_{\alpha\beta}(xy) = S_\beta(y)S_\alpha(x)$ for all $\alpha,\beta\in G$, $x\in H_\alpha$ and $y\in H_\beta$. \label{lem:prophgaa}
        \item $S_1(1_{H_1}) = 1_{H_1}$.
        \item $S_\alpha(x)_{(1)}\otimes S_\alpha(x)_{(2)} = S_\alpha(x_{(2)})\otimes S_\alpha(x_{(1)})$ for all $\alpha\in G$ and $x\in H_\alpha$. \label{lem:prophgac}
        \item $\varepsilon_{\alpha^{-1}}\circ  S_\alpha = \varepsilon_\alpha$ for all $\alpha\in G$. \label{lem:prophgad}
    \end{enumerate}
\end{lemma}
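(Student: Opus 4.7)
The plan is to mimic the standard Hopf algebra proofs, paying attention to the grading. The underlying fact is that for a fixed coalgebra $C$ with coproduct written $c\mapsto c^{(1)}\otimes c^{(2)}$ and linear maps $\phi_\gamma\colon C\to H_\gamma$, the convolution $(\phi_\gamma * \phi_\delta)(c) = \phi_\gamma(c^{(1)})\phi_\delta(c^{(2)})\in H_{\gamma\delta}$ is associative by (HG1) and (HG3), and has $\eta\circ\varepsilon_C\colon C\to H_1$ as two-sided unit by (HG2) and (HG4); hence two-sided convolution inverses, when they exist, are unique.

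Parts (b) and (d) are direct. For (b), (HG7) gives $\Delta_1(1_{H_1})=1_{H_1}\otimes 1_{H_1}$, so (HG9) together with (HG8) yields $S_1(1_{H_1})\cdot 1_{H_1}=1_{H_1}$, and (HG2) then forces $S_1(1_{H_1})=1_{H_1}$. For (d), apply $\varepsilon_1$ to (HG9); the right-hand side becomes $\varepsilon_\alpha(x)$ by (HG8), while the left-hand side, after splitting $\varepsilon_1\circ m_{\alpha^{-1},\alpha}$ via (HG6), collapses to $\varepsilon_{\alpha^{-1}}(S_\alpha(x))$ by the counit axiom (HG4).

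For (a), take $C=H_\alpha\otimes H_\beta$ with its standard tensor coalgebra structure and verify that $f:=S_{\alpha\beta}\circ m_{\alpha,\beta}$ and $g:=m_{\beta^{-1},\alpha^{-1}}\circ (S_\beta\otimes S_\alpha)\circ \tau_{H_\alpha,H_\beta}$ are respectively a right and a left convolution inverse of $m_{\alpha,\beta}\colon C\to H_{\alpha\beta}$. The right-inverse computation uses (HG5) to identify $\Delta$ of a product with the product of coproducts, then (HG9) and (HG6). The left-inverse computation applies (HG9) twice, interleaved with associativity (HG1). Uniqueness of convolution inverses then forces $f=g$.

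For (c), take $C=H_\alpha$ and equip $H_\alpha\otimes H_\alpha$ with the component-wise graded product. The candidate convolution inverses of $\Delta_\alpha$ are $f:=\Delta_{\alpha^{-1}}\circ S_\alpha$ (right) and $g:=\tau\circ(S_\alpha\otimes S_\alpha)\circ \Delta_\alpha$ (left). The right check reduces via (HG5) to a single application of (HG9) inside $\Delta_1$. The left check is the delicate computation: writing $g(x^{(1)})\cdot\Delta_\alpha(x^{(2)})$ and expanding with coassociativity (HG3) produces a four-fold coproduct of $x$, and one applies (HG9) to cancel the inner pair (leaving a middle $\varepsilon_\alpha$ that is absorbed by (HG4)) and then again on the outer pair, yielding $\varepsilon_\alpha(x)\,1_{H_1}\otimes 1_{H_1}$. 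Uniqueness gives $f=g$.

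The main obstacle is the four-fold coproduct bookkeeping in part~(c): one has to organize the graded pieces so that each application of (HG9) lands cleanly in $H_1$ on both tensor factors; everything else is a routine translation of the ungraded arguments with the gradings tracked along.
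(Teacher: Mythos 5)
The paper does not prove this lemma at all: it is quoted from \cite[Section~2.3]{zhang2022new}, consistent with the paper's stated policy of importing dual statements from the Hopf $G$-coalgebra literature rather than developing a standalone theory. Your proposal therefore supplies an argument where the paper supplies only a citation, and the argument you give is correct and complete. It is the standard convolution-inverse proof of the antipode's (co)algebra-antihomomorphism properties, with the only new ingredient being the bookkeeping of gradings: the convolution of $\phi_\gamma\colon C\to H_\gamma$ and $\phi_\delta\colon C\to H_\delta$ lands in $H_{\gamma\delta}$, associativity and the unit $\eta\varepsilon_C\colon C\to H_1$ follow from (HG1)--(HG4), and in each part the exhibited maps are indeed a right and a left convolution inverse of the same map (of $m_{\alpha,\beta}$ for (a), of $\Delta_\alpha$ for (c)), so they coincide; the gradings $H_{(\alpha\beta)^{-1}}=H_{\beta^{-1}\alpha^{-1}}$ and $H_{\alpha^{-1}}\otimes H_{\alpha^{-1}}$ match as required, and each application of (HG9) produces an element of $H_1$ as you claim. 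Parts (b) and (d) are verified directly and correctly. This matches how the result is proved in the cited source, so there is nothing to object to; if anything, your write-up could be inserted as a proof sketch without change.
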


\begin{lemma}[{\cite[Corollary~4]{zhang2022new}}]\label[lemma]{lem:nonzerosubgrhga}
    Let $\mathcal{H}$ be a Hopf $G$-algebra. Then $\{\alpha\in G\mid H_\alpha\neq0\}$ is a subgroup of $G$.
\end{lemma}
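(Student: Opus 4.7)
The plan is to verify the three subgroup axioms for $G_0 := \{\alpha \in G \mid H_\alpha \neq 0\}$: that $1 \in G_0$, that $G_0$ is closed under products, and that $G_0$ is closed under inversion. The main tool throughout will be the counit, whose multiplicativity (\ref{def:hga6}) and role in the antipode axiom (\ref{def:hga9}) will let us produce explicit nonzero elements in the relevant $H_\gamma$.

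First I would establish a preliminary observation: whenever $H_\alpha \neq 0$, the counit $\varepsilon_\alpha$ is nonzero, and moreover there exists $x \in H_\alpha$ with $\varepsilon_\alpha(x) \neq 0$. Indeed, the counit axiom (\ref{def:hga4}) says $\id_{H_\alpha} = (\varepsilon_\alpha \otimes \id_{H_\alpha}) \circ \Delta_\alpha$, so $\varepsilon_\alpha \equiv 0$ would force $H_\alpha = 0$. Similarly, from (\ref{def:hga8}) we have $\varepsilon_1 \circ \eta = \id_{\mathbb{C}}$, so $\eta \neq 0$, hence $1_{H_1} = \eta(1) \neq 0$ and $1 \in G_0$.

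For closure under products, suppose $\alpha, \beta \in G_0$. Choose $x \in H_\alpha$ and $y \in H_\beta$ with $\varepsilon_\alpha(x) \neq 0$ and $\varepsilon_\beta(y) \neq 0$. Axiom (\ref{def:hga6}) gives
\begin{equation*}
    \varepsilon_{\alpha\beta}(xy) = \varepsilon_\alpha(x)\,\varepsilon_\beta(y) \neq 0,
\end{equation*}
so $xy \in H_{\alpha\beta}$ is nonzero, showing $\alpha\beta \in G_0$.

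For closure under inversion, suppose $\alpha \in G_0$ and pick $x \in H_\alpha$ with $\varepsilon_\alpha(x) \neq 0$. The antipode axiom (\ref{def:hga9}) gives
\begin{equation*}
    S_\alpha(x^{(1)})\,x^{(2)} = \varepsilon_\alpha(x)\,1_{H_1}.
\end{equation*}
Since $1_{H_1} \neq 0$ by the previous step and $\varepsilon_\alpha(x) \neq 0$, the right-hand side is nonzero, which forces at least one term $S_\alpha(x^{(1)}) \in H_{\alpha^{-1}}$ to be nonzero. Hence $H_{\alpha^{-1}} \neq 0$ and $\alpha^{-1} \in G_0$.

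No step is a real obstacle: the proof is a direct unwinding of the defining axioms. The only mild subtlety is resisting the temptation to use the antipode map $S_\alpha$ itself to transport nonzeroness from $H_\alpha$ to $H_{\alpha^{-1}}$, which is not immediately justified because $S_\alpha$ need not be injective in the non-involutory setting; using (\ref{def:hga9}) together with a choice of $x$ with $\varepsilon_\alpha(x) \neq 0$ bypasses this issue cleanly.
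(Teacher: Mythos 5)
Your proof is correct and complete: the counit axioms (\ref{def:hga4}), (\ref{def:hga8}) give $1\in G_0$ and the nonvanishing of $\varepsilon_\alpha$ on nonzero components, (\ref{def:hga6}) gives closure under products, and (\ref{def:hga9}) gives closure under inverses, with the observation about not relying on injectivity of $S_\alpha$ being exactly the right point of care. The paper itself does not prove this lemma but imports it from \cite[Corollary~4]{zhang2022new}; your argument is the standard direct verification one would expect there, so there is nothing to flag.
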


\begin{remark}
    For finite type Hopf $G$-algebras, many of their properties can be derived by dualizing the corresponding notions in Hopf $G$-coalgebras. However, we suggest that these properties could also be obtained without relying on the theory of Hopf $G$-coalgebras but by developing a standalone theory of Hopf $G$-algebras. Given the requirement for a Hopf $G$-algebra to be of finite type in the construction of the invariant, we have decided to make use of dual statements, if necessary, in the following.
\end{remark}

\subsubsection{\texorpdfstring{$G$}{G}-integrals}

The notion of a $G$-integral of a Hopf $G$-algebra corresponds to the notion of an integral of a Hopf algebras.

\begin{definition}
    Let $\mathcal{H}$ be a finite type Hopf $G$-algebra. A \emph{left} (resp. \emph{right}) \emph{$G$-integral} of $\mathcal{H}$ is an element $\Lambda=(\Lambda_\alpha)_{\alpha\in G}\in\prod_{\alpha\in G}H_\alpha$ that satisfies
    \begin{equation*}
        x\Lambda_\beta = \varepsilon_\alpha(x)\Lambda_{\alpha\beta}\quad \left(\text{resp. } \Lambda_\alpha y = \varepsilon_\beta(y)\Lambda_{\alpha\beta}\right)
    \end{equation*}
    for all $\alpha,\beta\in G$, $x\in H_\alpha$ and $y\in H_\beta$. If $\Lambda$ is a both left and right $G$-integral, it is called a \emph{two-sided} $G$-integral. 
    
    A $G$-integral $\Lambda=(\Lambda_\alpha)_{\alpha\in G}$ is said to be \emph{nonzero} if $\Lambda_\alpha\neq0$ for some $\alpha\in G$.
\end{definition}

\begin{lemma}\label[lemma]{lem:evalginthga}
    Let $\mathcal{H}$ be a finite type Hopf $G$-algebra and $\Lambda=(\Lambda_\alpha)_{\alpha\in G}$ be a left (or right) $G$-integral of $\mathcal{H}$. Then $\varepsilon_\alpha(\Lambda_\alpha)=\varepsilon_\beta(\Lambda_\beta)$ for any $\alpha,\beta\in G$ with $H_\alpha\neq0$ and $H_\beta\neq0$.
\end{lemma}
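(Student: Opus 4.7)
The plan is to apply the counit $\varepsilon_{\alpha\beta}$ to the defining equation of a left $G$-integral, exploit the compatibility \ref{def:hga6} between $\varepsilon$ and the product, and then use \cref{lem:nonzerosubgrhga} to reach the desired indices.

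First I would establish the following elementary observation: whenever $H_\alpha\neq0$, the counit $\varepsilon_\alpha$ is nontrivial as a linear functional on $H_\alpha$. Indeed, \ref{def:hga4} says that $(\varepsilon_\alpha\otimes\id_{H_\alpha})\circ\Delta_\alpha=\id_{H_\alpha}$, so if $\varepsilon_\alpha$ were identically zero, the identity map on $H_\alpha$ would be zero, forcing $H_\alpha=0$. Hence there exists $x_0\in H_\alpha$ with $\varepsilon_\alpha(x_0)\neq0$.

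Next, assuming $\Lambda$ is a left $G$-integral, I would take the defining identity
\begin{equation*}
    x\Lambda_\beta = \varepsilon_\alpha(x)\,\Lambda_{\alpha\beta}
\end{equation*}
for arbitrary $x\in H_\alpha$ and apply $\varepsilon_{\alpha\beta}$ to both sides. By \ref{def:hga6}, the left-hand side becomes $\varepsilon_\alpha(x)\varepsilon_\beta(\Lambda_\beta)$, while the right-hand side is $\varepsilon_\alpha(x)\varepsilon_{\alpha\beta}(\Lambda_{\alpha\beta})$. Thus
\begin{equation*}
    \varepsilon_\alpha(x)\bigl[\varepsilon_\beta(\Lambda_\beta)-\varepsilon_{\alpha\beta}(\Lambda_{\alpha\beta})\bigr]=0
\end{equation*}
for every $x\in H_\alpha$. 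When $H_\alpha\neq0$, choosing $x=x_0$ from the previous step yields
\begin{equation*}
    \varepsilon_\beta(\Lambda_\beta)=\varepsilon_{\alpha\beta}(\Lambda_{\alpha\beta}).
\end{equation*}

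To conclude, given $\alpha,\beta\in G$ with $H_\alpha\neq0$ and $H_\beta\neq0$, I would set $\gamma\coloneqq\alpha\beta^{-1}$. By \cref{lem:nonzerosubgrhga} the indices with nonzero component form a subgroup of $G$, so $H_\gamma\neq0$. Applying the identity just derived with $\alpha$ replaced by $\gamma$ gives
\begin{equation*}
    \varepsilon_\beta(\Lambda_\beta)=\varepsilon_{\gamma\beta}(\Lambda_{\gamma\beta})=\varepsilon_\alpha(\Lambda_\alpha).
\end{equation*}
The right $G$-integral case is entirely symmetric: apply $\varepsilon_{\alpha\beta}$ to $\Lambda_\alpha y=\varepsilon_\beta(y)\Lambda_{\alpha\beta}$ and run the same argument with the roles of $\alpha,\beta$ exchanged.

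No step here is a real obstacle; the only mildly delicate point is verifying that $\varepsilon_\alpha$ cannot vanish on a nonzero $H_\alpha$, which is what makes \cref{lem:nonzerosubgrhga} usable to connect arbitrary indices in the support of $\mathcal{H}$.
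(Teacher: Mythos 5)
Your proposal is correct and follows essentially the same route as the paper: apply the counit to the left $G$-integral identity, use the multiplicativity of $\varepsilon$ (\ref{def:hga6}), and invoke \cref{lem:nonzerosubgrhga} with the index $\alpha\beta^{-1}$ to connect the two components. The only difference is cosmetic — you make explicit the observation that $\varepsilon_\alpha\neq0$ whenever $H_\alpha\neq0$ (via \ref{def:hga4}), which the paper uses implicitly.
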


\begin{proof}
    Let us consider the left $G$-integral case. The condition for $\Lambda$ to be a left $G$-integral can be rewritten as
    \begin{equation*}
        m_{\alpha,\beta}(\id_{H_\alpha}\otimes\Lambda_\beta) = \Lambda_{\alpha\beta}\varepsilon_\alpha
    \end{equation*}
    for any $\alpha,\beta\in G$. Replacing $\alpha$ with $\alpha\beta^{-1}$ and applying $\varepsilon_\alpha$ to both sides of the equation yields
    \begin{equation*}
        \varepsilon_{\beta}(\Lambda_\beta)\varepsilon_{\alpha\beta^{-1}} = \varepsilon_\alpha(\Lambda_{\alpha})\varepsilon_{\alpha\beta^{-1}}.
    \end{equation*}
    Now $H_{\alpha\beta^{-1}}\neq0$ by \cref{lem:nonzerosubgrhga}, and hence $\varepsilon_{\alpha\beta^{-1}}\neq0$. Therefore, we have $\varepsilon_\alpha(\Lambda_\alpha)=\varepsilon_\beta(\Lambda_\beta)$. The right $G$-integral case can be proved similarly.
\end{proof}

The following lemma is one of the main results of \cite{virelizier2002hopf}, which ensures the existence and uniqueness of a $G$-integral of a finite type Hopf $G$-(co)algebra:

\begin{lemma}[{Dual of \cite[Theorem~3.6]{virelizier2002hopf}}]\label[lemma]{lem:uniqueginthga}
    Let $\mathcal{H}$ be a finite type Hopf $G$-algebra. Then the space of left $G$-integrals and the space of right $G$-integrals of $\mathcal{H}$ are both one-dimensional, that is, if $\Lambda=(\Lambda_\alpha)_{\alpha\in G}$ and $\Lambda'=(\Lambda'_\alpha)_{\alpha\in G}$ are nonzero left (or right) $G$-integrals of $\mathcal{H}$, then there is some scalar $k\in\mathbb{C}$ such that $\Lambda'_\alpha=k\Lambda_\alpha$ for all $\alpha\in G$.
\end{lemma}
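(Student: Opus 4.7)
The plan is to reduce this statement to Virelizier's uniqueness theorem for Hopf $G$-coalgebras by exploiting the natural duality between finite type Hopf $G$-algebras and finite type Hopf $G$-coalgebras.

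First I would verify that $\mathcal{H}^{*} := \{H_\alpha^{*}\}_{\alpha\in G}$ carries the structure of a Hopf $G$-coalgebra in Virelizier's sense, obtained by transposing each structure map of $\mathcal{H}$. Concretely: each $H_\alpha^{*}$ becomes an algebra with product dual to $\Delta_\alpha$ and unit $\varepsilon_\alpha$; the coproduct $\Delta^{*}_{\alpha,\beta}\colon H_{\alpha\beta}^{*}\to H_\alpha^{*}\otimes H_\beta^{*}$ is dual to $m_{\alpha,\beta}$; the counit $H_1^{*}\to\mathbb{C}$ is dual to $\eta$; and the antipode $H_\alpha^{*}\to H_{\alpha^{-1}}^{*}$ is dual to $S_{\alpha^{-1}}$. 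Because $\mathcal{H}$ is of finite type, the canonical isomorphism $H_\alpha\cong (H_\alpha^{*})^{*}$ is available, and dualizing axioms (\ref{def:hga3})--(\ref{def:hga9}) together with the remaining ones yields precisely the axioms of a Hopf $G$-coalgebra.

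Next I would match the relevant notions of integral. Via $H_\alpha\cong (H_\alpha^{*})^{*}$, a family $\Lambda=(\Lambda_\alpha)\in\prod_\alpha H_\alpha$ corresponds to a family $\hat\Lambda=(\hat\Lambda_\alpha)$ with $\hat\Lambda_\alpha\in (H_\alpha^{*})^{*}$. Unfolding Virelizier's defining equation
\begin{equation*}
(\id_{H_\alpha^{*}}\otimes\hat\Lambda_\beta)\circ\Delta^{*}_{\alpha,\beta} = \hat\Lambda_{\alpha\beta}\cdot\varepsilon_\alpha
\end{equation*}
for a left integral of $\mathcal{H}^{*}$, and pairing against an arbitrary $x\in H_\alpha$ and $f\in H_{\alpha\beta}^{*}$, gives
\begin{equation*}
f(x\Lambda_\beta) = \varepsilon_\alpha(x)\,f(\Lambda_{\alpha\beta}).
\end{equation*}
Since $f$ is arbitrary, this is equivalent to $x\Lambda_\beta = \varepsilon_\alpha(x)\Lambda_{\alpha\beta}$, which is exactly the condition that $\Lambda$ be a left $G$-integral of $\mathcal{H}$. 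The right-integral case is analogous.

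Finally I would invoke \cite[Theorem~3.6]{virelizier2002hopf}, which asserts that the space of left (resp.\ right) integrals of a finite type Hopf $G$-coalgebra is one-dimensional, to conclude the same for the space of left (resp.\ right) $G$-integrals of $\mathcal{H}$; the uniqueness-up-to-a-single-scalar form of the statement is automatic because the correspondence above is $\mathbb{C}$-linear.

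The only real obstacle is the bookkeeping of how the group indices permute under the dualizations of $m_{\alpha,\beta}$ versus $\Delta_\alpha$, and making sure the antipode indices are aligned; this is purely mechanical since each $H_\alpha$ is finite-dimensional, and no new algebraic ideas are required beyond transposing linear maps. The actual mathematical content of the lemma resides in Virelizier's theorem.
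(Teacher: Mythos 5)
Your proposal is correct and matches the paper's approach exactly: the paper offers no written proof, simply labelling the lemma as the dual of Virelizier's Theorem~3.6 (and explicitly announcing in a remark that such properties are obtained by dualizing the Hopf $G$-coalgebra theory), which is precisely the dualization-and-bookkeeping argument you spell out.
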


\begin{lemma}[{Dual of \cite[Corollary~3.7~(a)]{virelizier2002hopf}}]\label[lemma]{lem:bijanthga}
    The antipode $S=\{S_\alpha\}_{\alpha\in G}$ of a finite type Hopf $G$-algebra $\mathcal{H}$ is bijective, i.e., $S_\alpha$ is bijective for all $\alpha\in G$.
\end{lemma}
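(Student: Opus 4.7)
The plan is to derive the bijectivity from the corresponding statement for Hopf $G$-coalgebras via duality, as suggested by the remark preceding this lemma.

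First I would observe that, since $\mathcal{H}$ is of finite type, the dual family $\mathcal{H}^* \coloneqq \{H_\alpha^*\}_{\alpha \in G}$ inherits a natural finite type Hopf $G$-coalgebra structure. Using the canonical identifications $(H_\alpha \otimes H_\beta)^* \cong H_\alpha^* \otimes H_\beta^*$, transposition converts each coproduct $\Delta_\alpha$ into a product on $H_\alpha^*$, each product $m_{\alpha,\beta}$ into a coproduct $H_{\alpha\beta}^* \to H_\alpha^* \otimes H_\beta^*$, the counits $\varepsilon_\alpha$ into units $\mathbb{C} \to H_\alpha^*$, and the unit $\eta$ into a counit $H_1^* \to \mathbb{C}$. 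The antipode $S_\alpha \colon H_\alpha \to H_{\alpha^{-1}}$ transposes to $S_\alpha^* \colon H_{\alpha^{-1}}^* \to H_\alpha^*$, and re-indexing gives a family $\widetilde{S}_\beta \coloneqq (S_{\beta^{-1}})^* \colon H_\beta^* \to H_{\beta^{-1}}^*$ which serves as the antipode of $\mathcal{H}^*$. Each Hopf $G$-coalgebra axiom for $\mathcal{H}^*$ is obtained by transposing the corresponding axiom in \cref{def:hga}.

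Next I would invoke \cite[Corollary~3.7(a)]{virelizier2002hopf} applied to $\mathcal{H}^*$, which asserts that each $\widetilde{S}_\beta$ is bijective. Since a linear map between finite-dimensional vector spaces is bijective if and only if its transpose is, the bijectivity of $\widetilde{S}_\beta = (S_{\beta^{-1}})^*$ is equivalent to the bijectivity of $S_{\beta^{-1}}$. Letting $\beta$ range over $G$ yields the claim.

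The main obstacle is purely bookkeeping: one must verify that every axiom in \cref{def:hga} transposes cleanly to the corresponding Hopf $G$-coalgebra axiom and that the $\alpha \leftrightarrow \alpha^{-1}$ swap for the antipode is handled consistently. No substantive algebraic difficulty arises, since the duality between finite type Hopf $G$-algebras and finite type Hopf $G$-coalgebras is entirely formal.
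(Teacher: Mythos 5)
Your proposal is correct and is exactly the paper's approach: the paper gives no explicit proof but attributes the lemma to dualization of \cite[Corollary~3.7~(a)]{virelizier2002hopf}, which is precisely the formal transposition argument you describe, with the finite type hypothesis ensuring that bijectivity of $(S_{\beta^{-1}})^*$ is equivalent to that of $S_{\beta^{-1}}$.
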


\begin{definition}
    A Hopf $G$-algebra $\mathcal{H}$ is said to be \emph{unimodular} if the space of left $G$-integrals and the space of right $G$-integrals of $\mathcal{H}$ coincide.
\end{definition}

\subsubsection{Quasitriangular Hopf-group algebras}

In order to define quasitriangular Hopf $G$-algebras, the notion of crossing is needed.

\begin{definition}\label[definition]{def:crossinghga}
    A \emph{crossed} Hopf $G$-algebra $(\mathcal{H},\varphi)$ is a pair, where $\mathcal{H}$ is a Hopf $G$-algebra and $\varphi=\{\varphi_\beta^\alpha\colon H_\alpha\to H_{\beta\alpha\beta^{-1}}\}$ is a family of linear maps, called the \emph{crossing}, that satisfies
    \begin{enumerate}[label=(CHG\arabic*),leftmargin=*,ref=CHG\arabic*]
        \item $\varphi_\beta^\alpha\colon H_\alpha\to H_{\beta\alpha\beta^{-1}}$ is a coalgebra isomorphism,\footnote{Recall that each $H_\alpha$ is a $\mathbb{C}$-coalgebra (\cref{rem:hgaremarkhga}).}
        \item $m_{\beta\alpha\beta^{-1},\beta\gamma\beta^{-1}}\circ(\varphi_\beta^\alpha\otimes\varphi_\beta^\gamma) = \varphi_\beta^{\alpha\gamma}\circ m_{\alpha,\gamma}\colon H_\alpha\otimes H_\gamma\to H_{\beta\alpha\gamma\beta^{-1}}$,
        \item $\varphi_\beta^1(1_{H_1}) = 1_{H_1}$,
        \item $\varphi_{\beta\beta'}^\alpha = \varphi_\beta^{\beta'\alpha\beta'^{-1}}\circ\varphi_{\beta'}^\alpha\colon H_\alpha\to H_{\beta\beta'\alpha\beta'^{-1}\beta^{-1}}$
    \end{enumerate}
    for all $\alpha,\beta,\beta',\gamma\in G$.
\end{definition}

\begin{remark}\label[remark]{rem:crossabelhga}
    When $G$ is abelian, $\mathcal{H}$ admits the trivial crossing by setting $\varphi_\beta^\alpha\coloneqq\id_{H_\alpha}$ for all $\alpha,\beta\in G$.
\end{remark}

\begin{lemma}[{\cite[Lemma~13]{zhang2022new}}]\label[lemma]{lem:crosshga}
    Let $(\mathcal{H},\varphi)$ be a crossed Hopf $G$-algebra. Then
    \begin{enumerate}[label=$(\mathrm{\alph*})$,leftmargin=*,ref=\alph*]
        \item $\varphi_1^\alpha=\id_{H_{\alpha}}$ for all $\alpha\in G$.
        \item $(\varphi_\beta^\alpha)^{-1}=\varphi_{\beta^{-1}}^{\beta\alpha\beta^{-1}}$ for all $\alpha,\beta\in G$.
        \item $\varphi_\beta^{\alpha^{-1}}\circ S_\alpha=S_{\beta\alpha\beta^{-1}}\circ\varphi_\beta^\alpha$ for all $\alpha,\beta\in G$.
        \item Let $\Lambda=(\Lambda_\alpha)_{\alpha\in G}$ be a left (resp. right) $G$-integral of $\mathcal{H}$. Then $(\varphi_\beta^\alpha(\Lambda_\alpha))_{\alpha\in G}$ is also a left (resp. right) $G$-integral of $\mathcal{H}$ for all $\beta\in G$,.
    \end{enumerate}
\end{lemma}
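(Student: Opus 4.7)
The plan is to derive parts (a), (b), and (d) by direct manipulation of the crossing axioms and to establish (c) by a convolution-inverse argument modeled on the classical proof that bialgebra maps between Hopf algebras commute with the antipodes. For (a), setting $\beta=\beta'=1$ in (CHG4) yields $\varphi_1^\alpha=\varphi_1^\alpha\circ\varphi_1^\alpha$, and since $\varphi_1^\alpha$ is invertible by (CHG1), this forces $\varphi_1^\alpha=\id_{H_\alpha}$. For (b), two applications of (CHG4)---to $(\beta,\beta^{-1})$ acting on $\beta\alpha\beta^{-1}$ and to $(\beta^{-1},\beta)$ acting on $\alpha$---combined with (a) give $\varphi_\beta^\alpha\circ\varphi_{\beta^{-1}}^{\beta\alpha\beta^{-1}}=\id$ and $\varphi_{\beta^{-1}}^{\beta\alpha\beta^{-1}}\circ\varphi_\beta^\alpha=\id$, hence the desired inverse formula.

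Part (c) is the main hurdle. The key observation is that $\varphi_\beta^\alpha\colon H_\alpha\to H_{\beta\alpha\beta^{-1}}$ behaves like a bialgebra map: it is a coalgebra map by (CHG1), while the family $\{\varphi_\beta^\bullet\}$ is multiplicative across components by (CHG2), with the unit preserved by (CHG3). Setting $f\coloneqq S_{\beta\alpha\beta^{-1}}\circ\varphi_\beta^\alpha$ and $g\coloneqq\varphi_\beta^{\alpha^{-1}}\circ S_\alpha$, both viewed as maps $H_\alpha\to H_{\beta\alpha^{-1}\beta^{-1}}$, I would check
\begin{equation*}
    f(x^{(1)})\,\varphi_\beta^\alpha(x^{(2)}) \;=\; \eta\varepsilon_\alpha(x) \;=\; \varphi_\beta^\alpha(x^{(1)})\,g(x^{(2)}),
\end{equation*}
the first identity following from the coalgebra-map property of $\varphi_\beta^\alpha$ together with (HG9) applied in $H_{\beta\alpha\beta^{-1}}$, and the second by collapsing the product via (CHG2) into $\varphi_\beta^1(x^{(1)}S_\alpha(x^{(2)}))$ and applying (HG9) and (CHG3). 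Both convolutions land in $H_1$, so the classical threading $f = f * \eta\varepsilon_\alpha = f * \varphi_\beta^\alpha * g = \eta\varepsilon_\alpha * g = g$ forces $f=g$. The only subtlety I anticipate is bookkeeping, namely confirming that associativity of convolution makes sense in this graded setting where the intermediate factors take values in different components $H_\gamma$.

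For (d), I would reindex the candidate family as $\Lambda'_\gamma\coloneqq\varphi_\beta^{\beta^{-1}\gamma\beta}(\Lambda_{\beta^{-1}\gamma\beta})\in H_\gamma$ so that it genuinely lies in $\prod_\alpha H_\alpha$. Given $x\in H_\mu$, I would write $x=\varphi_\beta^{\beta^{-1}\mu\beta}(\varphi_{\beta^{-1}}^\mu(x))$ using part (b), pull both $\varphi_\beta$'s through a single product via (CHG2), apply the left $G$-integral identity for $\Lambda$ on the internal expression, and finally use $\varepsilon_{\beta^{-1}\mu\beta}\circ\varphi_{\beta^{-1}}^\mu=\varepsilon_\mu$ (from the counit preservation built into (CHG1)) to extract $\varepsilon_\mu(x)$. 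This shows $x\Lambda'_\gamma=\varepsilon_\mu(x)\Lambda'_{\mu\gamma}$; the right $G$-integral case is entirely symmetric.
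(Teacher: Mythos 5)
Your proof is correct. Note that the paper itself gives no argument for this lemma --- it is imported wholesale from Zhang's work via the citation \cite[Lemma~13]{zhang2022new} --- so there is no in-paper proof to compare against; what you have written is a complete, self-contained derivation from the axioms (\ref{def:hga1})--(\ref{def:hga9}) and (CHG1)--(CHG4). Parts (a) and (b) are exactly the expected idempotent-plus-invertibility and two-sided-inverse manipulations of (CHG4). Part (c) is the right generalization of the classical ``bialgebra maps commute with antipodes'' convolution argument, and the one subtlety you flag is genuinely the only one: the convolution of maps $H_\alpha\to H_{\gamma_1}$ and $H_\alpha\to H_{\gamma_2}$ lands in $H_{\gamma_1\gamma_2}$, associativity holds by (HG1) and (\ref{def:hga3}), and $\eta\varepsilon_\alpha$ is the two-sided unit by (HG2) and (\ref{def:hga4}), so the threading $f=f*(\varphi_\beta^\alpha*g)=(f*\varphi_\beta^\alpha)*g=g$ goes through; the grading bookkeeping is consistent since $\beta\alpha^{-1}\beta^{-1}\cdot\beta\alpha\beta^{-1}\cdot\beta\alpha^{-1}\beta^{-1}=\beta\alpha^{-1}\beta^{-1}$. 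In part (d) you correctly repair the paper's slightly abusive indexing (the element $\varphi_\beta^\alpha(\Lambda_\alpha)$ lives in $H_{\beta\alpha\beta^{-1}}$, not $H_\alpha$) by setting $\Lambda'_\gamma=\varphi_\beta^{\beta^{-1}\gamma\beta}(\Lambda_{\beta^{-1}\gamma\beta})$, and the computation using (b), (CHG2), the integral identity, and counit preservation from (CHG1) is sound.
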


We are now ready to define a quasitriangular Hopf $G$-algebra.

\begin{definition}\label[definition]{def:qhga}
    A \emph{quasitriangular} Hopf $G$-algebra $(\mathcal{H},\varphi,R)$ is a triple, where $(\mathcal{H},\varphi)$ is a crossed Hopf $G$-algebra and $R\in H_1\otimes H_1$ is an invertible element, called the \emph{universal $R$-matrix}, that satisfies
    \begin{enumerate}[label=(QHG\arabic*),leftmargin=*,ref=QHG\arabic*]
        \item $(\Delta_1\otimes\id_{H_1})(R) = R_{13}R_{23}$,\label{lem:qhga1}
        \item $(\id_{H_1}\otimes\Delta_1)(R) = R_{13}R_{12}$,\label{lem:qhga2}
        \item $R\Delta_\alpha(x) = \Delta_\alpha^{\text{cop}}(x)R$,\label{lem:qhga3}
        \item $(\varphi_\beta^1\otimes\varphi_\beta^1)(R)=(R)$\label{lem:qhga4}
    \end{enumerate}
    for all $\alpha,\beta\in G$ and $x\in H_\alpha$. Here $R_{12}\coloneq R\otimes 1_{H_1},\;R_{23}\coloneq 1_{H_1}\otimes R,\;R_{13}\coloneq (\id_{H_1}\otimes\tau_{H_1,H_1})(R_{12})\in H_1^{\otimes 3}$ and $\Delta_\alpha^{\text{cop}}\coloneq\tau_{H_\alpha,H_\alpha}\circ\Delta$.
\end{definition}

\begin{remark}
    In this case, $(H_1,R)$ is a quasitriangular Hopf algebra. Therefore, all the lemmas discussed in \cref{sssec:QHA} hold in the context of quasitriangular Hopf $G$-algebras.
\end{remark}

\begin{remark}
    As far as the author knows, there have been two different definitions of quasitriangular Hopf $G$-algebras: one given in \cite{zhao2014quasitriangular,ma2015class}, and the other in \cite{zhang2022new}. Our definition falls between the two, being stronger than the former and weaker than the latter. Specifically, in the former, condition~(\ref{lem:qhga4}) is not imposed  (and therefore, the crossing structure is not necessary). In the latter, the universal $R$-matrix is defined to be a family $\{R_{\alpha,\beta}\in H_\alpha\otimes H_\beta\}_{\alpha,\beta\in G}$ that satisfies certain conditions, rather than a single element $R\in H_1\otimes H_1$ as we do. Restricting to conditions that involve only $R_{1,1}\in H_1\otimes H_1$ allows us to recover our definition.
\end{remark}

\begin{notation}
    As in the case of quasitriangular Hopf algebras, we use the same notation $R=a_i\otimes b_i$.
\end{notation}

\subsubsection{Involutory Hopf group-algebras}

Similar to involutory Hopf algebras, involutory Hopf $G$-algebras also behave very well.

\begin{definition}
    A Hopf $G$-algebra $\mathcal{H}$ is said to be \emph{involutory} if the antipode satisfies $S_{\alpha^{-1}}\circ S_{\alpha}=\id_{H_\alpha}$ for all $\alpha\in G$.
\end{definition}

\begin{lemma}[{Dual of \cites[Lemma~3]{virelizier2005involutory}[Corollary~5.7]{virelizier2002hopf}}]\label[lemma]{lem:invounihga}
    Let $\mathcal{H}$ be a finite type involutory Hopf $G$-algebra such that $\dim(H_1)\neq0$. Then $\mathcal{H}$ is unimodular.
\end{lemma}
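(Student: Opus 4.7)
The approach is to dualize the corresponding result for Hopf $G$-coalgebras, as indicated by the citation ``Dual of \ldots'' in the statement. The plan has three main steps.

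First, since $\mathcal{H}$ is of finite type, I would promote the family $\mathcal{H}^\vee \coloneqq \{H_\alpha^*\}_{\alpha \in G}$ to a Hopf $G$-coalgebra by transposing all the structure maps: each product $m_{\alpha,\beta}$ yields a coproduct $m_{\alpha,\beta}^* \colon H_{\alpha\beta}^* \to H_\alpha^* \otimes H_\beta^*$; each coproduct $\Delta_\alpha$ yields the convolution product on $H_\alpha^*$ with unit $\varepsilon_\alpha$; and the antipode $S_{\alpha^{-1}} \colon H_{\alpha^{-1}} \to H_\alpha$ transposes to the antipode $H_\alpha^* \to H_{\alpha^{-1}}^*$ of $\mathcal{H}^\vee$. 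The involutivity of $\mathcal{H}$ and the nonvanishing of $\dim(H_1)$ transfer to $\mathcal{H}^\vee$ immediately.

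Second, I would apply the Hopf $G$-coalgebra version of the theorem, namely \cite[Lemma~3]{virelizier2005involutory} together with \cite[Corollary~5.7]{virelizier2002hopf}, to conclude that $\mathcal{H}^\vee$ is unimodular. Third, I would transport unimodularity back across the duality by checking directly that, via the canonical identification $H_\alpha \cong (H_\alpha^*)^*$, a family $(\mu_\alpha)_{\alpha\in G} \in \prod_\alpha H_\alpha$ is a left (resp.\ right) $G$-integral of $\mathcal{H}$ if and only if it is a left (resp.\ right) $G$-integral of the Hopf $G$-coalgebra $\mathcal{H}^\vee$ in the sense of \cite{virelizier2002hopf}.

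The main obstacle is this last step. After unwinding the transposes, the Hopf $G$-coalgebra left integral condition $(\id \otimes \mu_\beta) \circ m_{\alpha,\beta}^*(\phi) = \mu_{\alpha\beta}(\phi)\,\varepsilon_\alpha$ for $\phi \in H_{\alpha\beta}^*$ should reduce (using the defining property $m_{\alpha,\beta}^*(\phi)(h\otimes k) = \phi(hk)$) to the identity $h\mu_\beta = \varepsilon_\alpha(h)\,\mu_{\alpha\beta}$ for $h \in H_\alpha$, matching the left $G$-integral condition of \cref{def:hga} exactly. The verification itself is routine, but the bookkeeping requires care to confirm that the ordering conventions are preserved so that ``left'' matches ``left'' rather than being accidentally swapped with ``right''. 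Once this correspondence is in place, the unimodularity of $\mathcal{H}^\vee$ transfers to $\mathcal{H}$ and the proof is complete.
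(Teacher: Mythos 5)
Your proposal is correct and coincides with the paper's approach: the paper gives no explicit proof of this lemma, simply citing it as the dual of Virelizier's results for Hopf $G$-coalgebras, which is exactly the dualization-and-transfer argument you outline (and your check that ``left'' matches ``left'' under the identification $h\mu_\beta=\varepsilon_\alpha(h)\mu_{\alpha\beta}$ agrees with the paper's definition of a left $G$-integral).
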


\begin{lemma}\label[lemma]{lem:intantihga}
    Let $\mathcal{H}$ be a finite type Hopf $G$-algebra and $\Lambda=(\Lambda_\alpha)_{\alpha\in G}$ be a left (resp. right) $G$-integral of $\mathcal{H}$. Then $S_G(\Lambda)\coloneqq(S_\alpha(\Lambda_\alpha))_{\alpha\in G}$ is a right (resp. left) integral of $\mathcal{H}$. Moreover, if $\mathcal{H}$ is involutory, then $S_\alpha(\Lambda_\alpha)=\Lambda_{\alpha^{-1}}$ for all $\alpha\in G$.
\end{lemma}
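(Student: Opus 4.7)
My plan is to treat the two statements separately, relying on the antipode's bijectivity (\cref{lem:bijanthga}), its anti-multiplicativity \cref{lem:prophga}~(\ref{lem:prophgaa}), and the counit identity \cref{lem:prophga}~(\ref{lem:prophgad}); I would handle only the left $G$-integral case, the right case being symmetric.

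For the first claim, since $S_\alpha(\Lambda_\alpha) \in H_{\alpha^{-1}}$ rather than $H_\alpha$, I would re-index $S_G(\Lambda)$ as $\mu_\beta := S_{\beta^{-1}}(\Lambda_{\beta^{-1}}) \in H_\beta$ so that it fits the format of \cref{def:hga}. The goal becomes the right $G$-integral relation $\mu_\beta\,y = \varepsilon_\gamma(y)\,\mu_{\beta\gamma}$ for every $y \in H_\gamma$. Writing $y = S_{\gamma^{-1}}(\tilde y)$ for the unique $\tilde y \in H_{\gamma^{-1}}$ produced by bijectivity and applying anti-multiplicativity gives
\begin{equation*}
\mu_\beta\,y = S_{\beta^{-1}}(\Lambda_{\beta^{-1}})\,S_{\gamma^{-1}}(\tilde y) = S_{(\beta\gamma)^{-1}}\bigl(\tilde y\,\Lambda_{\beta^{-1}}\bigr).
\end{equation*}
The left $G$-integral property evaluates $\tilde y\,\Lambda_{\beta^{-1}} = \varepsilon_{\gamma^{-1}}(\tilde y)\,\Lambda_{(\beta\gamma)^{-1}}$, and \cref{lem:prophga}~(\ref{lem:prophgad}) converts $\varepsilon_{\gamma^{-1}}(\tilde y)$ into $\varepsilon_\gamma(y)$, closing the identity.

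For the involutory case I would argue directly rather than go through the uniqueness lemma. \cref{lem:invounihga} gives unimodularity, so $\Lambda$ is a two-sided $G$-integral and in particular $\Lambda_1$ is a two-sided integral of the finite-dimensional involutory Hopf algebra $H_1$. If $H_\alpha = 0$, then $H_{\alpha^{-1}} = 0$ by \cref{lem:nonzerosubgrhga} and the equation is trivial; otherwise (\ref{def:hga4}) forces $\varepsilon_\alpha \neq 0$, so I would pick $x \in H_\alpha$ with $\varepsilon_\alpha(x) = 1$, and the left $G$-integral property at $\beta = 1$ yields $\Lambda_\alpha = x\,\Lambda_1$. Applying $S_\alpha$, using anti-multiplicativity, and invoking \cref{lem:ordinvoequation}~(\ref{lem:ordinvoequationc}) for $H_1$ gives
\begin{equation*}
S_\alpha(\Lambda_\alpha) = S_1(\Lambda_1)\,S_\alpha(x) = \Lambda_1\,S_\alpha(x).
\end{equation*}
Finally, the right $G$-integral property (available by unimodularity) applied with $\alpha = 1$, $\beta = \alpha^{-1}$, $y = S_\alpha(x) \in H_{\alpha^{-1}}$, together with $\varepsilon_{\alpha^{-1}}(S_\alpha(x)) = \varepsilon_\alpha(x) = 1$, yields $\Lambda_1\,S_\alpha(x) = \Lambda_{\alpha^{-1}}$.

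I do not anticipate a serious obstacle; the only real care needed is the bookkeeping of group indices, as each application of $S$ inverts the index. A potential worry that $\Lambda_1 = 0$ might obstruct the formula $\Lambda_\alpha = x\,\Lambda_1$ is dispelled by that same formula, which would force $\Lambda \equiv 0$ in that case, making the desired identity trivially true.
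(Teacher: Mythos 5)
Your proof is correct. The first claim is handled exactly as in the paper: the same re-indexing $\mu_\beta = S_{\beta^{-1}}(\Lambda_{\beta^{-1}})$, the same substitution $y = S_{\gamma^{-1}}(\tilde y)$ via \cref{lem:bijanthga}, and the same chain through \cref{lem:prophga}~(\ref{lem:prophgaa}), the left $G$-integral property, and \cref{lem:prophga}~(\ref{lem:prophgad}). For the \enquote{moreover} part you take a genuinely different route. The paper invokes unimodularity (\cref{lem:invounihga}) together with the one-dimensionality of the space of $G$-integrals (\cref{lem:uniqueginthga}) to write $S_\alpha(\Lambda_\alpha)=k\Lambda_{\alpha^{-1}}$ for a single scalar $k$, and then fixes $k=1$ by applying the counit. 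You instead express $\Lambda_\alpha = x\Lambda_1$ with $\varepsilon_\alpha(x)=1$, push $S_\alpha$ through by anti-multiplicativity, and reduce everything to the ordinary Hopf algebra fact $S_1(\Lambda_1)=\Lambda_1$ plus the right $G$-integral property. Your version avoids the uniqueness lemma entirely and localizes the involutory input to $H_1$, at the cost of the small case analysis on whether $H_\alpha=0$ or $\Lambda_1=0$ (which you handle correctly, and which the paper's scalar argument also tacitly needs, since fixing $k=1$ by applying $\varepsilon_{\alpha^{-1}}$ presupposes $\varepsilon_{\alpha^{-1}}(\Lambda_{\alpha^{-1}})\neq0$). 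The only point where a sentence of justification would be welcome is your appeal to \cref{lem:ordinvoequation}~(\ref{lem:ordinvoequationc}), whose hypotheses require an integral $\lambda\in H_1^*$ with $\lambda(\Lambda_1)=1$; when $\Lambda_1\neq0$ such a $\lambda$ exists by the standard nondegeneracy of integrals in finite-dimensional Hopf algebras (and is in any case assumed in the paper's normalization), so this is not a gap.
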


\begin{proof}
    Let $\Lambda=(\Lambda_\alpha)_{\alpha\in G}$ be a nonzero left $G$-integral of $\mathcal{H}$. For any $\alpha,\beta\in G$ and $x\in H_\beta$, noting that $S_{\alpha^{-1}}(\Lambda_{\alpha^{-1}})\in H_\alpha$, the calculation
    \begin{align*}
        S_{\alpha^{-1}}(\Lambda_{\alpha^{-1}})x &= S_{\alpha^{-1}}(\Lambda_{\alpha^{-1}})S_{\beta^{-1}}(S_{\beta^{-1}}^{-1}(x)) &&\text{(\Cref{lem:bijanthga})}\\
        &= S_{\beta^{-1}\alpha^{-1}}(S_{\beta^{-1}}^{-1}(x)\Lambda_{\alpha^{-1}}) && \text{(\Cref{lem:prophga}~(\ref{lem:prophgaa}))}\\
        &= S_{(\alpha\beta)^{-1}}(\varepsilon_{\beta^{-1}}(S_{\beta^{-1}}^{-1}(x))\Lambda_{(\alpha\beta)^{-1}}) && \text{($\Lambda$ is a $G$-integral)}\\
        &= \varepsilon_\beta(x)S_{(\alpha\beta)^{-1}}(\Lambda_{(\alpha\beta)^{-1}}) && \text{(\Cref{lem:prophga}~(\ref{lem:prophgad}))}
    \end{align*}
    shows that $S_G(\Lambda)$ is a right $G$-integral of $\mathcal{H}$. The right $G$-integral case can be proved similarly.

    If $\mathcal{H}$ is involutory, then $\mathcal{H}$ is unimodular by \cref{lem:invounihga}. Since the space of $G$-integrals is one-dimensional (\cref{lem:uniqueginthga}), there is $k\in\mathbb{C}$ such that $S_\alpha(\Lambda_\alpha)=k\Lambda_{\alpha^{-1}}$ for all $\alpha\in G$. Applying $\varepsilon_{\alpha^{-1}}$ to both sides yields $k=1$.
\end{proof}

\begin{lemma}[{Dual of \cite[Corollary~4.4]{virelizier2002hopf}}]\label[lemma]{lem:cyccointhga}
    Let $\mathcal{H}$ be a finite type involutory Hopf $G$-algebra and $\lambda$ be a (two-sided) integral of $H_1^*$. Then $\lambda(xy) = \lambda(yx)$ for any $\alpha\in G$, $x\in H_\alpha$ and $y\in H_{\alpha^{-1}}$.
\end{lemma}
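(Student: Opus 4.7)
The plan is to imitate the classical argument establishing \cref{lem:ordinvoequation}~(d) for an involutory Hopf algebra, while carefully tracking the $G$-grading. First, since $\mathcal{H}$ is involutory, \cref{lem:invounihga} guarantees that $\mathcal{H}$ is unimodular, so I fix a nonzero two-sided $G$-integral $\Lambda=(\Lambda_\alpha)_{\alpha\in G}$, normalized so that $\lambda(\Lambda_1)=1$ (possible because $\lambda$ is a nonzero two-sided integral on the finite-dimensional involutory Hopf algebra $H_1$, hence pairs non-degenerately with any nonzero two-sided integral of $H_1$). By \cref{lem:intantihga}, we additionally have $S_\alpha(\Lambda_\alpha)=\Lambda_{\alpha^{-1}}$ for all $\alpha\in G$.

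The key intermediate identity I will target is the \emph{twisted cyclicity}
\begin{equation*}
    \lambda(xy) \;=\; \lambda\bigl(y\, S_{\alpha^{-1}}(S_\alpha(x))\bigr) \qquad (x\in H_\alpha,\; y\in H_{\alpha^{-1}}).
\end{equation*}
Once this is established, the involutory hypothesis $S_{\alpha^{-1}}\circ S_\alpha=\id_{H_\alpha}$ collapses the right-hand side to $\lambda(yx)$, which is the desired statement. To prove the twisted identity I will mimic Radford's argument: starting from the right-integral identity $\lambda(z^{(1)})\,z^{(2)}=\lambda(z)\,1_{H_1}$ for $z=xy\in H_1$ and expanding via (HG5) to
\begin{equation*}
    \lambda(x^{(1)}y^{(1)})\,x^{(2)}y^{(2)} \;=\; \lambda(xy)\,1_{H_1},
\end{equation*}
I then apply the antipode to the $x$-strands and invoke (HG9) together with \cref{lem:prophga} to peel $x$ off the left slot of $\lambda$ and reattach it (now as $S_{\alpha^{-1}}S_\alpha(x)$) on the right of $y$. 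The $H_1$-level cyclicity \cref{lem:ordinvoequation}~(d) is invoked at the step where both factors sit inside $H_1$.

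The main obstacle is the grading bookkeeping: each $x^{(i)}\in H_\alpha$ and each $y^{(i)}\in H_{\alpha^{-1}}$, and one must check that every intermediate product lands in the correct graded piece so that (HG9), \cref{lem:prophga}, and the two-sided $G$-integral identities are applied legitimately. If this direct computation becomes unwieldy, a cleaner fallback is to invoke duality explicitly: since $\mathcal{H}$ is of finite type, the family $\mathcal{H}^\ast=\{H_\alpha^\ast\}_{\alpha\in G}$ inherits a Hopf $G$-coalgebra structure, and the statement becomes the exact dual of \cite[Corollary~4.4]{virelizier2002hopf}, from which it can be transported back to $\mathcal{H}$ by dualization—this is the route suggested by the attribution in the lemma statement.
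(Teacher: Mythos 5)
The paper gives no proof of this lemma at all: it is imported wholesale as the dual of \cite[Corollary~4.4]{virelizier2002hopf}, which is exactly your fallback route. So the duality argument you relegate to a backup plan is in fact the paper's entire argument, and it is the safe way to go.

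Your primary route, however, has a genuine gap at its central step. The twisted cyclicity $\lambda(xy)=\lambda\bigl(y\,S_{\alpha^{-1}}(S_\alpha(x))\bigr)$ is the right intermediate target (it is the unimodular, graded form of the Nakayama-automorphism formula underlying Virelizier's result), but it does not follow from the ``peel $x$ off and reattach it'' Sweedler manipulation you describe. Already in the ungraded case, the proof of $\lambda(ab)=\lambda(b\,S^2(a))$ is not a direct coproduct computation: it needs the Larson--Sweedler nondegeneracy of the pairing $(a,b)\mapsto\lambda(ab)$, and in the graded setting the corresponding input is the bijectivity of $H_\alpha\to H_{\alpha^{-1}}^*$, $x\mapsto\lambda(\,\cdot\,x)$, i.e.\ the graded Larson--Sweedler theorem --- precisely the hard content of \cite[Theorem~3.6]{virelizier2002hopf} that the paper also only imports by duality (\cref{lem:uniqueginthga}, \cref{lem:bijanthga}). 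Your sketch never invokes this. If you carry out the manipulations you describe (apply $\Delta$ to $x\Lambda_{\alpha^{-1}}$ or $\Lambda_\alpha y$, cancel via (\ref{def:hga9}), evaluate with $\lambda$), you obtain identities such as $\Lambda_\alpha^{(1)}\lambda(y\Lambda_\alpha^{(2)})=S_{\alpha^{-1}}(y)=\lambda(\Lambda_\alpha^{(1)}y)\Lambda_\alpha^{(2)}$; combining these with cocommutativity of $\Delta_\alpha(\Lambda_\alpha)$ yields only $\lambda(w\,S_\alpha(x))=\lambda(x\,S_\alpha(w))$ for $w,x\in H_\alpha$, which by \cref{lem:prophga}~(\ref{lem:prophgaa}) is equivalent to the already-known $\lambda\circ S_1=\lambda$ and says nothing about $\lambda(xy)$ versus $\lambda(yx)$. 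To extract traciality from $\Lambda_\alpha^{(1)}\lambda(y\Lambda_\alpha^{(2)})=\lambda(\Lambda_\alpha^{(1)}y)\Lambda_\alpha^{(2)}$ one must know that $f\mapsto\Lambda_\alpha^{(1)}f(\Lambda_\alpha^{(2)})$ is injective on $H_\alpha^*$, and that is exactly the missing nondegeneracy. Finally, the remark that ``$H_1$-level cyclicity is invoked where both factors sit inside $H_1$'' cannot do any work: $xy$ and $yx$ lie in $H_1$ from the outset, but \cref{lem:ordinvoequation} applies only to products of two elements \emph{of} $H_1$, which $x$ and $y$ are not. Either supply the nondegeneracy input explicitly, or adopt the dualization argument outright.
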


\begin{lemma}[{Dual of \cite[Lemma~7.2, 6.3, Corollary~6.2]{virelizier2002hopf}}]\label[lemma]{lem:crossinthga}
    Let $(\mathcal{H},\varphi)$ be a finite type involutory crossed Hopf $G$-algebra. Let $\Lambda=(\Lambda_\alpha)_{\alpha\in G}$ be a (two-sided) $G$-integral of $\mathcal{H}$ and $\lambda$ be a (two-sided) integral of $H_1^*$. Then
    \begin{enumerate}[label=$(\mathrm{\alph*})$,leftmargin=*,ref=\alph*]
        \item $\varphi_\beta^\alpha(\Lambda_\alpha)=\Lambda_{\beta\alpha\beta^{-1}}$ for all $\alpha,\beta\in G$.\label{lem:crossinthgaa}
        \item $\lambda\circ\varphi_\beta^1=\lambda$ for all $\beta\in G$.\label{lem:crossinthgab}
    \end{enumerate}
\end{lemma}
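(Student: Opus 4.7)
The plan is to treat both parts via uniqueness of (two-sided) integrals, recognizing the relevant map as integral-preserving and fixing the scalar by a non-degeneracy argument.

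For part (a), since $\Lambda$ is two-sided, \cref{lem:crosshga}(d) gives that the reindexed family
\begin{equation*}
    \tilde{\Lambda}_\gamma \coloneqq \varphi_\beta^{\beta^{-1}\gamma\beta}\bigl(\Lambda_{\beta^{-1}\gamma\beta}\bigr) \in H_\gamma \quad (\gamma\in G)
\end{equation*}
is again a two-sided $G$-integral of $\mathcal{H}$. By one-dimensionality (\cref{lem:uniqueginthga}), there exists a scalar $c(\beta)\in\mathbb{C}$ with $\varphi_\beta^\alpha(\Lambda_\alpha)=c(\beta)\,\Lambda_{\beta\alpha\beta^{-1}}$ for every $\alpha\in G$. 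Applying $\varepsilon_{\beta\alpha\beta^{-1}}$ to both sides and using that $\varphi_\beta^\alpha$ is a coalgebra isomorphism (condition (CHG1) of \cref{def:crossinghga}) yields
\begin{equation*}
    \varepsilon_\alpha(\Lambda_\alpha) = c(\beta)\,\varepsilon_{\beta\alpha\beta^{-1}}(\Lambda_{\beta\alpha\beta^{-1}}),
\end{equation*}
whose two sides agree by \cref{lem:evalginthga}. Taking $\alpha=1$, this common value is nonzero by \cref{lem:ordinvoequation}~(\ref{lem:ordinvoequationa}) applied to the finite-dimensional involutory Hopf algebra $H_1$, so $c(\beta)=1$; the trivial case $H_\alpha=0$ is covered automatically because the isomorphism $\varphi_\beta^\alpha$ forces $H_{\beta\alpha\beta^{-1}}=0$ as well.

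For part (b), the key observation is that $\varphi_\beta^1\colon H_1\to H_1$ is a Hopf algebra automorphism: it is a coalgebra isomorphism by (CHG1), an algebra homomorphism by (CHG2) with $\alpha=\gamma=1$, and unital by (CHG3), and any such unital bialgebra automorphism necessarily intertwines the (unique) antipode. A short direct computation using $\Delta_1\circ\varphi_\beta^1=(\varphi_\beta^1\otimes\varphi_\beta^1)\circ\Delta_1$ together with $\varphi_\beta^1(1_{H_1})=1_{H_1}$ shows that $\lambda\circ\varphi_\beta^1$ is again a two-sided integral of $H_1^*$. By one-dimensionality of the integral space, $\lambda\circ\varphi_\beta^1=c'(\beta)\lambda$ for some $c'(\beta)\in\mathbb{C}$. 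Evaluating at $\Lambda_1$ and invoking part (a) at $\alpha=1$, which gives $\varphi_\beta^1(\Lambda_1)=\Lambda_1$, produces $\lambda(\Lambda_1)=c'(\beta)\lambda(\Lambda_1)$, and the non-degenerate integral pairing in $H_1$ (again \cref{lem:ordinvoequation}~(\ref{lem:ordinvoequationa})) forces $c'(\beta)=1$.

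The main difficulty is merely bookkeeping: in part (a), the family $(\varphi_\beta^\alpha(\Lambda_\alpha))_{\alpha\in G}$ has $\alpha$-th entry in $H_{\beta\alpha\beta^{-1}}$ rather than $H_\alpha$, so one must relabel by $\gamma=\beta\alpha\beta^{-1}$ before the definition of a $G$-integral applies, and one must separate the trivial components where $H_\alpha=0$. Otherwise both parts reduce, after the structural observations above, to the same non-degeneracy fact for integrals of the finite-dimensional involutory Hopf algebra $H_1$, and no deeper obstacle is expected.
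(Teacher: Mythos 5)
Your proof is correct in substance, but note that the paper does not actually prove this lemma: it is imported wholesale as the dual of Virelizier's Lemma~7.2, Lemma~6.3, and Corollary~6.2, so any comparison is really with the proof one would obtain by dualizing that source. What you wrote is essentially that dualization carried out explicitly inside the paper's own toolkit: recognize the transported family as a $G$-integral via \cref{lem:crosshga}, invoke one-dimensionality (\cref{lem:uniqueginthga}), and pin the proportionality constant to $1$ by evaluating against the counit, using \cref{lem:evalginthga} and the counit-preservation built into the crossing being a coalgebra isomorphism. Your reindexing $\tilde{\Lambda}_\gamma=\varphi_\beta^{\beta^{-1}\gamma\beta}(\Lambda_{\beta^{-1}\gamma\beta})$ is exactly the bookkeeping needed to make \cref{lem:crosshga}~(d) literally say what it must, and part (b) via ``$\lambda\circ\varphi_\beta^1$ is again an integral of $H_1^*$'' is the right mechanism. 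This buys the reader a self-contained argument where the paper offers only a citation, at the cost of a slightly longer Section~2.

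Two loose ends should be tightened. First, in part (a) you need $\varepsilon_1(\Lambda_1)\neq 0$, which presupposes $\Lambda_1\neq 0$; the lemma only assumes $\Lambda$ is a $G$-integral, so you should either dispose of the case $\Lambda=0$ separately (where (a) is vacuous) and then observe that $\Lambda\neq 0$ forces $\Lambda_1\neq 0$ --- from $x\Lambda_1=\varepsilon_\alpha(x)\Lambda_\alpha$ with $\varepsilon_\alpha\neq 0$ whenever $H_\alpha\neq 0$ --- or say this explicitly. (You also implicitly use $H_1\neq 0$, which follows from \cref{lem:nonzerosubgrhga} unless $\mathcal{H}$ is entirely zero.) Second, in part (b) the conclusion does not involve $\Lambda$ at all, so testing $\lambda\circ\varphi_\beta^1=c'(\beta)\lambda$ against the given $\Lambda_1$ fails if that $\Lambda$ happens to be zero; it is cleaner to evaluate at $1_{H_1}$, where $\varphi_\beta^1(1_{H_1})=1_{H_1}$ and $\lambda(1_{H_1})=\dim(H_1)\neq 0$ by \cref{lem:ordinvoequation}~(\ref{lem:ordinvoequationa}), which avoids both the choice of test vector and the appeal to nondegeneracy of the pairing $\lambda(\Lambda_1)\neq 0$. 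With these small repairs the argument is complete.
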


We end this section with two examples of involutory quasitriangular Hopf $G$-algebras.

\begin{example}\label[example]{ex:cyclicgrhga}
    Let $\mathbb{Z}_m=\langle g\mid g^m\rangle$ be a cyclic group of order $m$. Choose an arbitrary factorization of $m$ into two positive integers $k,l$: $m=kl$. We set $\mathbb{Z}_k=\langle\alpha\mid\alpha^k\rangle$ and construct a Hopf $\mathbb{Z}_k$-algebra $\mathcal{H}=\{H_{\alpha^p}\}_{p=0,\ldots,k-1}$ as follows:

    Let $\omega=e^\frac{2\pi\sqrt{-1}}{l}$ be an $l$-th root of unity. For each $p=0,\ldots,k-1$, $H_{\alpha^p}$ is a vector space with a basis $\{g^p,g^{k+p},\allowbreak \ldots,g^{(l-1)k+p}\}$. We then define structure morphisms on $\mathcal{H}$ by
    \begin{description}[labelwidth=5em,labelindent=2em,font=\normalfont]
        \item[product] $m_{\alpha^p,\alpha^q}(g^{ik+p}\otimes g^{jk+q}) \coloneqq g^{(i+j)k+p+q}$,
        \item[unit] $\eta(1) \coloneqq 1$,
        \item[coproduct] $\Delta_{\alpha^p}(g^{ik+p}) \coloneqq g^{ik+p}\otimes g^{ik+p}$,
        \item[counit] $\varepsilon_{\alpha^p}(g^{ik+p}) \coloneqq 1$,
        \item[antipode] $S_{\alpha^p}(g^{ik+p}) \coloneqq g^{-ik-p}$.
    \end{description}
    Here $p,q=0,\ldots,k-1$ and $i,j=0,\ldots,l-1$, and exponents are taken modulo $m$.\footnote{This structure is derived from the Hopf algebra structure of the group ring $\mathbb{C}[\mathbb{Z}_m]$.} Since $\mathbb{Z}_k$ is abelian, $\mathcal{H}$ admits the trivial crossing (\cref{rem:crossabelhga}) and also has universal $R$-matrices given by
    \begin{equation*}
        R_d = \frac{1}{l}\sum_{i,j=0}^{l-1}\omega^{-ij}g^{ik}\otimes g^{djk} \in H_1\otimes H_1,\quad d\in\{0,\ldots,l-1\}. \footnotemark
    \end{equation*}
    \footnotetext{These are all the universal $R$-matrices of the Hopf algebra $H_1\cong\mathbb{C}[\mathbb{Z}_l]$ given in \cite[Theorem~3]{wakui1998universal}.} A $G$-integral $\Lambda=(\Lambda_{\alpha^p})_{p=0,\ldots,k-1}$ of $\mathcal{H}$ and an integral $\lambda$ of $H_1^*$ are
    \begin{equation*}
        \Lambda_{\alpha^p} = \frac{1}{l}\sum_{i=0}^{l-1}g^{ik+p}\in H_{\alpha^p},\quad \lambda = l\delta_1\in H_1^*,
    \end{equation*}
    respectively, where $\delta_1$ is the dual basis of $1\in H_1$.
\end{example}

\begin{example}
    Let us first recall the Kac--Paljutkin algebra $H_8$~\cite{kac1966finite,masuoka1995semisimple}. As an algebra, $H_8$ is generated by $x,y,z$ subject to the relations
    \begin{equation*}
        x^2 = y^2 = 1,\quad z^2 = \frac{1}{2}(1+x+y-xy),\quad xy = yx,\quad zx = yz,\quad zy=xz,
    \end{equation*}
    so that $\{1,x,y,xy,z,xz,yz,xyz\}$ forms a basis. The Hopf algebra structure is given by
    \begin{gather*}
        \Delta(x) = x\otimes x,\quad \Delta(y) = y\otimes y,\quad \Delta(z) = \frac{1}{2}(1\otimes 1 + 1\otimes x + y\otimes 1 - y\otimes x)(z\otimes z),\\
        \varepsilon(x) = \varepsilon(y) = \varepsilon(z) = 1,\quad S(x)=x,\quad S(y)=y,\quad S(z)=z.
    \end{gather*}
    This Hopf algebra is the only 8-dimensional involutory Hopf algebra that is neither commutative nor cocommutative.

    Define a linear map $\mu\colon H_8\to H_8$ as the conjugation by $x\in H_8$, i.e., $\mu(h)\coloneqq xhx^{-1}$ for all $h\in H_8$. In particular, $\mu(x) = x$, $\mu(y) = y$, and $\mu(z) = xyz$. Then $\mu$ turns out to be a Hopf algebra automorphism. Since $\mu^2=\id_{H_8}$, it generates a group $G\coloneqq\{1=\id_{H_8},\mu\}\cong\mathbb{Z}_2$.
    
    We are now able to construct a Hopf $G$-algebra $\mathcal{H}=\{H_1,H_\mu\}$ out of $H_8$. For each $\alpha\in G$, $H_\alpha$ is a copy of $H_8$ as a vector space. Let $\iota_\alpha\colon H_8\to H_\alpha$ be the identity. We then define structure morphisms on $\mathcal{H}$ by
    \begin{description}[labelwidth=5em,labelindent=2em,font=\normalfont]
        \item[product] $m_{\alpha,\beta}(\iota_\alpha(a)\otimes \iota_\beta(b)) \coloneq \iota_{\alpha\beta}(a\alpha(b))$,
        \item[unit] $\eta(1) \coloneq \iota_1(1)$,
        \item[coproduct] $\Delta_\alpha(\iota_\alpha(a)) \coloneq \iota_\alpha(a^{(1)})\otimes \iota_\alpha(a^{(2)})$,
        \item[counit] $\varepsilon_\alpha(\iota_\alpha(a)) \coloneq \varepsilon(a)$,
        \item[antipode] $S_\alpha(\iota_\alpha(a)) \coloneq i_{\alpha^{-1}}(S(\alpha^{-1}(a)))=\iota_{\alpha^{-1}}(\alpha^{-1}(S(a)))$.
    \end{description}
    Here $\alpha,\beta\in G$ and $a,b\in H_8$.  The Hopf $G$-algebra $\mathcal{H}$ admits a nontrivial crossing $\varphi=\{\varphi_\beta^\alpha\}_{\alpha,\beta\in G}$ given by
    \begin{equation*}
        \varphi_\beta^\alpha(\iota_\alpha(a))\coloneqq\iota_{\beta\alpha\beta^{-1}}(\beta(a)),
    \end{equation*}
    and with this crossing, it has a universal $R$-matrix given by
    \begin{equation*}
        R=\frac{1}{2}(\iota_1(1)\otimes\iota_1(1)+\iota_1(x)\otimes\iota_1(1)+\iota_1(1)\otimes\iota_1(y)-\iota_1(x)\otimes\iota_1(y)).
    \end{equation*}
    A $G$-integral $\Lambda=(\Lambda_\alpha)_{\alpha\in G}$ of $\mathcal{H}$ and an integral $\lambda$ of $H_1^*$ are
    \begin{gather*}
        \Lambda_\alpha = \frac{1}{8}(\iota_\alpha(1)+\iota_\alpha(x)+\iota_\alpha(y)+\iota_\alpha(xy)+\iota_\alpha(z)+\iota_\alpha(xz)+\iota_\alpha(yz)+\iota_\alpha(xyz)),\\
        \lambda = 8\delta_1,
    \end{gather*}
    respectively.
\end{example}

\section{Colored Kirby diagrams}\label[section]{sec:coloredkirby}

In this section, we discuss geometric concepts used in the construction of the invariant. The main object we use is what we call a $G$-colored Kirby diagram of a flat $G$-connection, which is essentially a Kirby diagram of the base 4-manifold whose dotted components are assigned elements of $G$. To obtain this, we calculate a presentation of the fundamental group of the base 4-manifold from its Kirby diagram and determine a $G$-coloring using the flat $G$-connection. (By a 4-manifold we mean a 4-dimensional smooth oriented connected closed manifold.)

\subsection{Kirby diagrams}
We first quickly review Kirby diagrams of 4-manifolds. We refer the reader to \cite{kirby1989topology,gompf19994,akbulut20164} for more details.

Let $D^k$ denote the $k$-dimensional closed ball. For a (4-dimensional) $k$-handle $D^k\times D^{4-k}\;(k=0,\ldots,4)$, we call $\partial D^k\times D^{4-k}$ the \emph{attaching region} and $\partial D^k\times 0\cong S^{k-1}$ the \emph{attaching sphere}.

Let $M$ be a 4-manifold. Since $M$ is connected, $M$ admits a handle decomposition that has exactly one $0$-handle. Indeed, if a decomposition initially contains multiple $0$-handles, some $1$-handles must connect distinct $0$-handles. In that case, the connected $0$-handles together with the $1$-handle between them are regarded as a single $0$-handle. We thus choose such a handle decomposition of $M$ and identify the boundary of the unique $0$-handle with $\partial D^4=\mathbb{R}^3\cup\{\infty\}$. A $1$-handle is attached to $\partial D^4$ by embedding its attaching region $\partial D^1\times D^3=D^3\coprod D^3$, and this attachment can be described as the removal of a neighborhood of some properly embedded $2$-disk from the $0$-handle~\cite[Section~5.4]{gompf19994}. We therefore draw the boundary circle of the disk on $\partial D^4$ to specify the attachment of the $1$-handle. We also draw a \emph{dot} on the circle to distinguish it from subsequent $2$-handle attachments. After attaching all the $1$-handles, a $2$-handle is attached to the boundary of the resulting $1$-handlebody by embedding its attaching region $\partial D^2\times D^2$. This attachment is determined by (the image of) the attaching sphere $\partial D^2\times 0\cong S^1$ and a framing, which is parameterized by $\pi_1(\mathrm{SO}(2))\cong\mathbb{Z}$ ($0$-framing corresponds to the framing induced by a Seifert surface). We therefore describe this attachment by drawing the image of the attaching sphere as a framed knot (the framing is represented by an integer). When the $2$-handle is attached to a part of a $1$-handle, we draw the knot so that it passes through the dotted circle corresponding to the $1$-handle.

A \emph{Kirby diagram} $L=L_1\sqcup L_2$ is a framed link diagram in $\partial D^4(=\mathbb{R}^3\cup\{\infty\})$ consisting of the set $L_1$ of the dotted components (with $0$-framings) for the $1$-handles and the set $L_2$ of the undotted framed components for the $2$-handles. Thanks to the fact that a closed 4-manifold is determined (up to diffeomorphism) by the $2$-handlebody of a handle decomposition~\cite{laudenbach1972note}, the Kirby diagram $L$ can successfully specify $M$ (up to diffeomorphism). Therefore, for the 3- and 4-handles, we only write their numbers (or even omit them because they will not be used when defining the invariant).

Furthermore, we make use of a \emph{planar Kirby diagram}, which is a Kirby diagram regularly projected onto the plane $\mathbb{R}^2$ with respect to the height function in such a way that:
\begin{itemize}
    \item The dotted components are projected as disjoint circles, with the undotted components passing vertically through them.
    \item The framing of each component is given by the blackboard framing.
\end{itemize}
Some examples of (planar) Kirby diagrams are given in \cref{fig:kirbydiagram,fig:planarS2S1S1}. In the rest of the paper, by a Kirby diagram we always mean a planar Kirby diagram.

\begin{figure}
    \centering
    \includegraphics[scale=1]{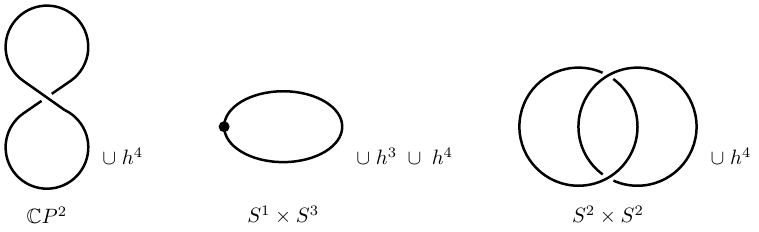}
    \caption{Examples of Kirby diagrams}
    \label{fig:kirbydiagram}
\end{figure}

\begin{figure}
    \centering
    \captionsetup{width=.9\linewidth}
    \includegraphics[scale=1]{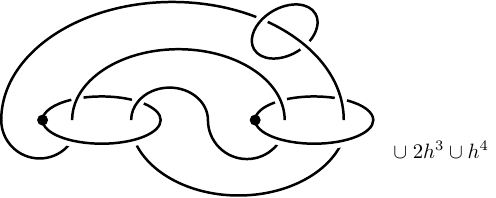}
    \caption{A Kirby diagram of $S^1\times S^1\times S^2$. Let $s_1$ and $s_2$ be generators corresponding to the left and right dotted components, respectively. One can then verify that $\pi_1(S^1\times S^1\times S^2)=\langle s_1,s_2\mid s_1s_2s_1^{-1}s_2^{-1}\rangle\cong \mathbb{Z}\times\mathbb{Z}$.}
    \label{fig:planarS2S1S1}
\end{figure}

\subsubsection{Kirby diagrams and fundamental groups}\label[section]{ssec:kirbyfundgr}

From a Kirby diagram $L=L_1\sqcup L_2$ of $M$, we can obtain a presentation of the fundamental group of $M$ by associating  generators to the dotted component and relations to the undotted components as follows~\cite[Section~2.3.3]{barenz2018dichromatic}:

Each dotted component $\xi_i\in L_1$ corresponds to a generator $s_i$. (Specifically, the generator $s_i$ is represented by a closed curve that passes through the $1$-handle corresponding to $\xi_i$ exactly once and does not pass through any other $1$-handles.) Then, while traversing along each undotted component $\eta_j\in L_2$ in an arbitrary direction from any starting point, we record the generator $s_i$ each time it passes downward through a dotted component $\xi_i$, and its inverse $s_i^{-1}$ each time it passes upward through $\xi_i$. After completing the traversal, we concatenate the stored generators in the order they were recorded, and the resulting word, $r_j$, becomes a relation in the presentation of $\pi_1(M)$. (If $\eta_j$ does not pass through any dotted components, then the corresponding relation is interpreted as $r_j=1$, the trivial one.) Consequently, we get
\begin{equation*}
    \pi_1(M) = \langle s_1,\ldots,s_m\mid r_1,\ldots,r_n\rangle,
\end{equation*}
where $m$ and $n$ are the number of dotted and undotted components of $L$, respectively. \Cref{fig:planarS2S1S1} illustrates an example.

\subsection{Colored Kirby diagrams}\label[subsection]{ssec:coloredkirby}

Here we introduce the notion of a $G$-colored Kirby diagram of a flat $G$-connection.

\begin{definition}\label[definition]{def:connection}
    Let $G$ be a (discrete) group. A \emph{flat $G$-connection} on $M$ is a group homomorphism $\rho\colon\pi_1(M)\to G$. We sometimes refer to the space $M$ as the \emph{base space} of the connection.
    
    Two flat $G$-connections $\rho$ and $\rho'$ on $M$ are said to be \emph{isomorphic} if they differ by the conjugation by an element of $G$, that is, if there is $\beta\in G$ such that $\rho'(s)=\beta\rho(s)\beta^{-1}$ for all $s\in\pi_1(M)$. We also say that two flat $G$-connections $\rho\colon\pi_1(M)\to G$ and $\rho'\colon\pi_1(M')\to G$ on possibly different base spaces are \emph{equivalent} if there is an orientation-preserving diffeomorphism $h\colon M\to M'$ such that $\rho$ and $\rho'h_*$ are isomorphic as flat $G$-connections on $M$, where $h_*\colon\pi_1(M)\to\pi_1(M')$ is the induced group isomorphism.
\end{definition}

Let $\rho\colon\pi_1(M)\to G$ be a flat $G$-connection and $L=L_1\sqcup L_2$ be a Kirby diagram of $M$. We present the fundamental group of $M$ as $\pi_1(M)=\langle s_1,\ldots,s_m\mid r_1,\ldots,r_n\rangle$ as in \cref{ssec:kirbyfundgr} and label each dotted component $\xi_i$ with $\alpha_i\coloneqq\rho(s_i)\in G$. We refer to the label $\alpha_i$ as the \emph{color} of $\xi_i$ and call this assignment a \emph{$G$-coloring} of $L$. A Kirby diagram equipped with a $G$-coloring is called a \emph{$G$-colored Kirby diagram}.

Recall that any two handle decompositions of a manifold can be related by a finite sequence of isotopies of attaching maps and the creation/annihilation of canceling pairs~\cite{cerf1970stratification}. In the context of 4-manifolds, this is achieved by isotopies on levels and five specific moves known as the \emph{Kirby moves}~\cite[Section~5.1]{gompf19994}. In connection with this, $G$-colored Kirby diagrams of two equivalent flat $G$-connections on 4-manifolds are related by the following operations:
\begin{itemize}
    \item isotopies with respect to the height function,
    \item moves involving only undotted components (\cref{fig:moveonlyL2}),
    \item moves involving both dotted and undotted components (\cref{fig:moveL1L2}),
    \item (planar) Kirby moves (\cref{fig:planarkirbymove}),
    \item changing the colors of all the dotted components simultaneously to their conjugates by an element of $G$.
\end{itemize}
The first three operations correspond to projecting Kirby diagrams, and the fourth to the Kirby moves. The last operation, given by conjugation, accounts for differences in $G$-colorings and preserves the equivalence class of $G$-connections.

\begin{figure}
    \centering
    \begin{subcaptionblock}{.48\textwidth}
        \centering
        \includegraphics[scale=1]{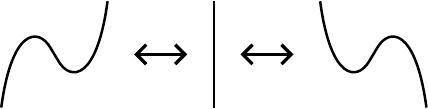}
        \caption*{I-1}
        \label{fig:I-1}
    \end{subcaptionblock}
    \begin{subcaptionblock}{.48\textwidth}
        \centering
        \includegraphics[scale=1]{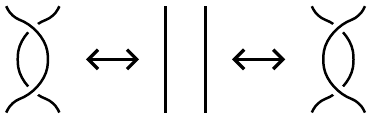}
        \caption*{I-2}
        \label{fig:I-2}
    \end{subcaptionblock}\vspace{0.1in}
    \begin{subcaptionblock}{1\textwidth}
        \centering
        \includegraphics[scale=1]{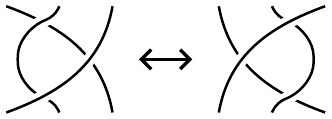}
        \caption*{I-3}
        \label{fig:I-3}
    \end{subcaptionblock}\vspace{0.1in}
    \begin{subcaptionblock}{1\textwidth}
        \centering
        \includegraphics[scale=1]{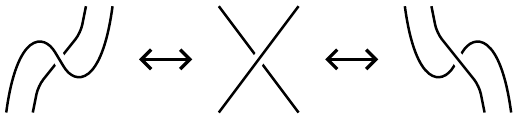}
        \caption*{I-4}
        \label{fig:I-4}
    \end{subcaptionblock}\vspace{0.1in}
    \begin{subcaptionblock}{1\textwidth}
        \centering
        \includegraphics[scale=1]{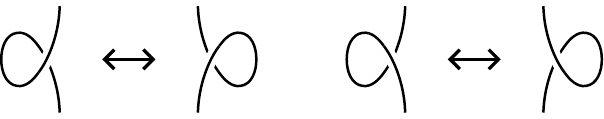}
        \caption*{I-5}
        \label{fig:I-5}
    \end{subcaptionblock}
    \caption{Moves involving only undotted components}
    \label{fig:moveonlyL2}
\end{figure}

\begin{figure}
    \centering
    \begin{subcaptionblock}{.48\textwidth}
        \centering
        \includegraphics[scale=1]{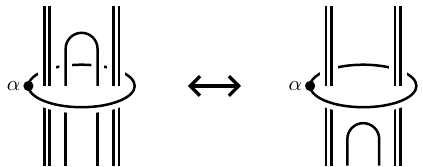}
        \caption*{II-1}
        \label{fig:II-1}
    \end{subcaptionblock}\hfill
    \begin{subcaptionblock}{.48\textwidth}
        \centering
        \includegraphics[scale=1]{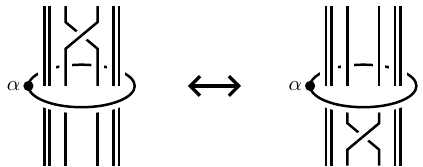}
        \caption*{II-2}
        \label{fig:II-2}
    \end{subcaptionblock}\vspace{0.1in}
    \begin{subcaptionblock}{.48\textwidth}
        \centering
        \includegraphics[scale=1]{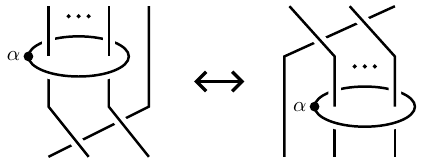}
        \caption*{II-3}
        \label{fig:II-3}
    \end{subcaptionblock}\hfill
    \begin{subcaptionblock}{.48\textwidth}
        \centering
        \includegraphics[scale=1]{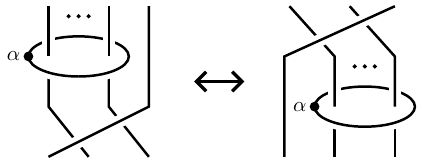}
        \caption*{II-4}
        \label{fig:II-4}
    \end{subcaptionblock}\vspace{0.1in}
    \begin{subcaptionblock}{1\textwidth}
        \centering
        \includegraphics[scale=1]{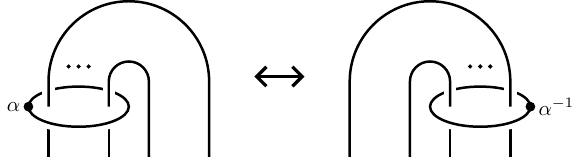}
        \caption*{II-5}
        \label{fig:II-5}
    \end{subcaptionblock}\vspace{0.1in}
    \begin{subcaptionblock}{1\textwidth}
        \centering
        \includegraphics[scale=1]{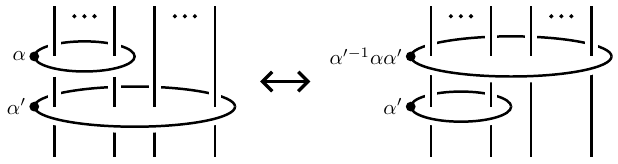}
        \caption*{II-6}
        \label{fig:II-6}
    \end{subcaptionblock}
    \caption{Moves involving both dotted and undotted components}
    \label{fig:moveL1L2}
\end{figure}

\begin{figure}
    \centering
    \captionsetup{width=.9\linewidth}
    \begin{subcaptionblock}{1\textwidth}
        \centering
        \includegraphics[scale=1]{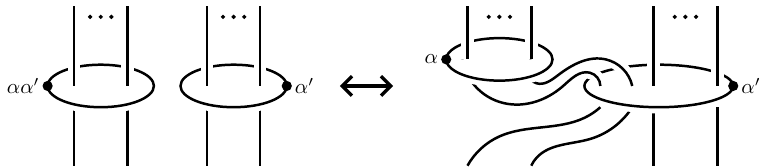}
        \caption*{III-1: $1$-$1$-handle slide}
        \label{fig:III-1}
    \end{subcaptionblock}\vspace{0.1in}
    \begin{subcaptionblock}{.45\textwidth}
        \centering
        \includegraphics[scale=1]{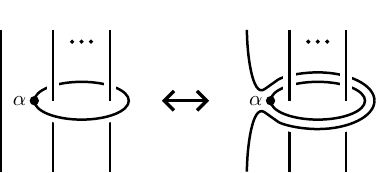}
        \caption*{III-2: $1$-$2$-handle slide}
        \label{fig:III-2}
    \end{subcaptionblock}
    \begin{subcaptionblock}{.45\textwidth}
        \centering
        \includegraphics[scale=1]{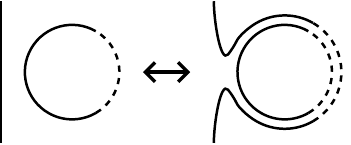}
        \caption*{III-3: $2$-$2$-handle slide}
        \label{fig:III-3}
    \end{subcaptionblock}\vspace{0.1in}
    \begin{subcaptionblock}{.45\textwidth}
        \centering
        \includegraphics[scale=1]{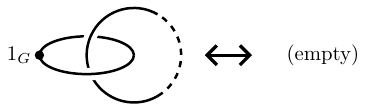}
        \caption*{III-4: $1$-$2$-canceling pair}
        \label{fig:III-4}
    \end{subcaptionblock}
    \begin{subcaptionblock}{.45\textwidth}
        \centering
        \includegraphics[scale=1]{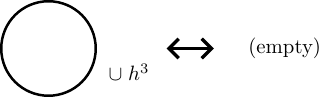}
        \caption*{III-5: $2$-$3$-canceling pair}
        \label{fig:III-5}
    \end{subcaptionblock}
    \caption{(Planar) Kirby moves. In Move~\hyperref[fig:III-3]{III-3}, the strand described by the dashed line can be knotted and linked with other components arbitrarily, and the slide is along the blackboard framing of it. In Move~\hyperref[fig:III-4]{III-4}, the undotted component might be knotted but must not be linked with any other components, except for the dotted component appearing in the figure.}
    \label{fig:planarkirbymove}
\end{figure}

\section{The invariant}\label[section]{sec:invariant}

In this section, we define an invariant of flat connections on 4-manifolds and provide a precise proof of its invariance. We employ a 4-dimensional analog of the Hennings-type construction, and similar techniques can be seen in \cite{bobtcheva2003hkr,chaidez20224,beliakova2022kerler,beliakova2023refined}.

\subsection{Definition}

Let $(\mathcal{H},\varphi,R)$ be a finite type involutory quasitriangular Hopf $G$-algebra. We choose a (two-sided) integral $\Lambda=(\Lambda_\alpha)_{\alpha\in G}$ of $\mathcal{H}$ and a (two-sided) integral $\lambda$ of $H_1^*$ such that $\lambda(\Lambda_1)=1$, $\varepsilon_1(\Lambda_1)=1$. Further, let $\rho\colon\pi_1(M)\to G$ be a flat $G$-connection on a 4-manifold $M$ and $L=L_1\sqcup L_2$ be a Kirby diagram of $M$. As described in \cref{ssec:coloredkirby}, we color the Kirby diagram $L$ and then orient each undotted component arbitrarily.

We first assign elements of $\mathcal{H}$ to the undotted components according to the following procedure. For a non-critical point $q\in\eta$ on an undotted component $\eta\in L_2$, we define $\sigma_q$ to be $0$ if $\eta$ goes downward near $q$, and $1$ if $\eta$ goes upward.
\begin{itemize}
    \item For each dotted component $\xi\in L_1$ colored with $\alpha$, let $q_1,\ldots,q_k$ be the intersections of the disk bounded by $\xi$ with the undotted components passing through it, listed from left to right. We then assign to them the tensor
    \begin{align*}
	\MoveEqLeft (S_\alpha^{\sigma_{q_1}}\otimes\cdots\otimes S_\alpha^{\sigma_{q_k}})(\Delta_\alpha^{(k-1)}(\Lambda_\alpha))\\
        ={}& S_\alpha^{\sigma_{q_1}}(\Lambda_\alpha^{(1)})\otimes\cdots\otimes S_\alpha^{\sigma_{q_k}}(\Lambda_\alpha^{(k)}) \in H_{\alpha^{(-1)^{\sigma_{q_1}}}}\otimes\cdots\otimes H_{\alpha^{(-1)^{\sigma_{q_k}}}}
    \end{align*}
    so that the $i$-th component of the tensor is assigned to $q_i$ (See \cref{fig:contril1l2}). If there are no undotted components passing through $\xi$, then the contribution is interpreted as $\varepsilon_\alpha(\Lambda_\alpha)$, which is $1$ if $H_\alpha\neq0$, and $0$ if $H_\alpha=0$ by \cref{lem:evalginthga} and the assumption $\varepsilon_1(\Lambda_1)=1$.
    \item For each crossing of undotted components (including self-crossing), let $q_1$ and $q_2$ be points on the over and under strands near the crossing, respectively. We then check the sign of the crossing regarding the orientation of the strands (\cref{fig:crossorient}). If the crossing is positive, we assign the universal $R$-matrix $R=a_i\otimes b_i\in H_1\otimes H_1$ so that the first component is assigned to $q_1$ and the second to $q_2$. If the crossing is negative, we assign $R^{-1}=(S_1\otimes\id_{H_1})(R)\in H_1\otimes H_1$ in the same manner as in the positive case.
\end{itemize}

\begin{figure}[t]
    \centering
    \includegraphics[scale=1]{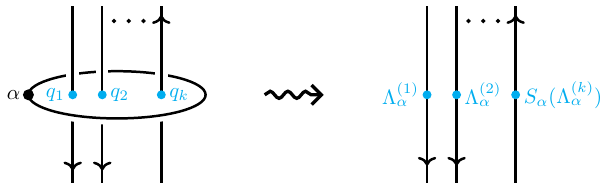}
    \caption{Contribution at a dotted component}
    \label{fig:contril1l2}
\end{figure}

\begin{figure}
    \centering
    \captionsetup{width=.9\linewidth}
    \includegraphics[scale=1]{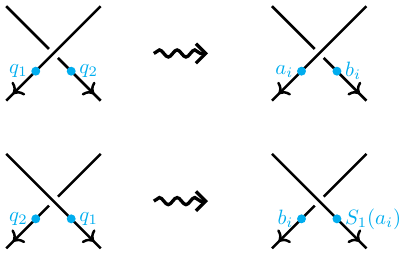}
    \caption{Contribution at a positive crossing (top) and a negative crossing (bottom)}
    \label{fig:crossorient}
\end{figure}

At this stage, the undotted components have received specific elements based on the procedure above (and for convenience, all other parts are interpreted as being assigned $1_{H_1}\in H_1$). From the observation in \cref{ssec:kirbyfundgr}, the product of the assigned elements on each undotted component, multiplied along its orientation from any starting point, belongs to $H_1$. We can therefore evaluate this value by the integral $\lambda\in H_1^*$ (\cref{fig:contraction}). Note that by \cref{lem:cyccointhga}, the result does not depend on the choice of the starting point. We set a bracket $\langle L,\rho\rangle_\mathcal{H}$ to be the product of the evaluations for all the undotted components, and then define the invariant $I_{\mathcal{H}}(M,\rho,L)$ by
\begin{equation*}
    I_{\mathcal{H}}(M,\rho,L) \coloneq \dim(H_1)^{|L_1|-|L_2|} \langle L,\rho\rangle_\mathcal{H},
\end{equation*}
where $|L_1|$ and $|L_2|$ are the number of dotted and undotted components of $L$, respectively. 

\begin{figure}[t]
    \centering
    \includegraphics[scale=1]{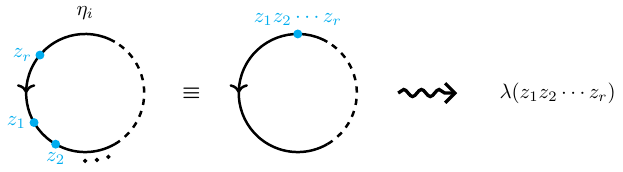}
    \caption{Evaluation of each undotted component}
    \label{fig:contraction}
\end{figure}

\begin{theorem}\label[theorem]{thm:invariance}
    This $I_{\mathcal{H}}(M,\rho,L)$ indeed defines an invariant of flat $G$-connections on 4-manifolds.
\end{theorem}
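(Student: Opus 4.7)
The plan is to verify that $I_{\mathcal{H}}(M,\rho,L)$ is preserved under every operation in the list at the end of \cref{ssec:coloredkirby}, since this list exhausts the ways in which $G$-colored Kirby diagrams of equivalent flat $G$-connections differ. I would also need to check a few preliminary well-definedness issues: independence from the starting point of the cyclic product along each undotted component (immediate from \cref{lem:cyccointhga}, noting that the product splits into factors in $H_\alpha$ and $H_{\alpha^{-1}}$ as one crosses dotted components), and independence from the orientation of each undotted component (using $(S_1\otimes S_1)(R)=R$ from \cref{lem:proprmat}~(\ref{lem:proprmatc}) together with $S_\alpha\circ S_{\alpha^{-1}}=\id$ and the fact that $\lambda\circ S_1 = \lambda$, which is the specialization of \cref{lem:ordinvoequation}~(\ref{lem:ordinvoequatione}) to the Hopf algebra $H_1$).

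After that, the verifications split by move type. For the moves involving only undotted components (\cref{fig:moveonlyL2}) and for the $2$-$2$-handle slide and isotopies of crossings, the reasoning is the standard Hennings-type calculus in $H_1$: Reidemeister-II-like moves follow from $RR^{-1}=1$ and \cref{lem:proprmat}~(\ref{lem:proprmata}); Reidemeister III from the quantum Yang–Baxter equation \cref{lem:proprmat}~(\ref{lem:proprmatd}); framing-change moves from the ribbon structure provided by \cref{lem:invoribbon}, where the central element $u=S(b_i)a_i$ plays the role of the twist. The moves in \cref{fig:moveL1L2} that slide pieces of undotted components across a dotted component $\xi$ of color $\alpha$ are handled by the defining property of a $G$-integral: sliding past $\xi$ corresponds to multiplying by $x\in H_\beta$ adjacent to a factor of $\Lambda_\alpha^{(i)}$, and the identity $x\Lambda_\alpha = \varepsilon_\beta(x)\Lambda_{\beta\alpha}$ (together with \cref{lem:prophga} for antipodes) contracts this cleanly; the two-sidedness of $\Lambda$ handles orientation asymmetries. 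The key combinatorial input throughout is \cref{lem:equatrmat}, which ensures that multiple strands running through a dotted component can be ``re-bundled'' into one factor of $\Delta^{(k-1)}(\Lambda_\alpha)$.

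The Kirby moves (\cref{fig:planarkirbymove}) are the main technical step. The $1$-$1$ and $1$-$2$-handle slides (III-1 and III-2) are verified by using the coassociativity \cref{def:hga3} of $\Delta_\alpha$ to split $\Delta_\alpha^{(k-1)}(\Lambda_\alpha)$ into the factors corresponding to the merged dotted component, combined once again with the $G$-integral identity to absorb the extra strand. The $1$-$2$-canceling pair (III-4) is where $|L_1|$ and $|L_2|$ each drop by $1$, so $\dim(H_1)^{|L_1|-|L_2|}$ is unchanged; invariance here reduces to $\varepsilon_1(\Lambda_1)=1$ (which we assumed) together with the integral killing the contributed loop via \cref{lem:proprmat}~(\ref{lem:proprmata}). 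The $2$-$3$-canceling pair (III-5) is the one place where the normalization actively contributes: removing a $0$-framed unknotted $2$-handle decreases $|L_2|$ by $1$ and multiplies the bracket by $\lambda(1_{H_1})$, and invariance follows from $\lambda(1_{H_1})\varepsilon_1(\Lambda_1)=\dim(H_1)$, which is \cref{lem:ordinvoequation}~(\ref{lem:ordinvoequationa}) for $H_1$. Finally, invariance under simultaneously conjugating all colors by $\beta\in G$ is shown by applying $\varphi_\beta$: \cref{lem:crossinthga}~(\ref{lem:crossinthgaa}) replaces each $\Lambda_\alpha$ by $\Lambda_{\beta\alpha\beta^{-1}}$, condition~(\ref{lem:qhga4}) preserves the $R$-matrix contributions, and \cref{lem:crossinthga}~(\ref{lem:crossinthgab}) shows that the final evaluation by $\lambda$ is unchanged.

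The hard part will be the $1$-$2$-handle slide (III-2), because one must track several strands crossing the merged dotted component simultaneously and check that the resulting tensor, after invoking $G$-integrality and the compatibility $\Delta_{\alpha\beta}\circ m_{\alpha,\beta}$ of \cref{def:hga}, matches the diagram before the slide. A careful bookkeeping argument, in the spirit of the Hennings and Bobtcheva–Kauffman–Reshetikhin calculus cited in the introduction, is needed — and it is here that the $G$-grading on $\mathcal{H}$, the crossing $\varphi$, and involutivity all conspire to make the algebraic manipulation close up. Once all these local verifications are in place, the invariance of $I_\mathcal{H}(M,\rho,L)$ under every generator of the equivalence relation on $G$-colored Kirby diagrams follows, establishing the theorem.
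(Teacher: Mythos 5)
Your proposal follows the same strategy as the paper: first settle the auxiliary choices (independence of the starting point via \cref{lem:cyccointhga}, independence of orientation via involutivity and $\lambda\circ S_1=\lambda$), then verify invariance move by move against the list in \cref{ssec:coloredkirby}, using essentially the identities you name for Moves I-1--I-5, the moves through dotted components, the canceling pairs, and the coloring change. One substantive correction: the $2$-$2$-handle slide (Move III-3) is not disposed of by the Reidemeister-type $R$-matrix identities you group it with. It is the one move whose verification hinges on the defining property of the integral $\lambda\in H_1^*$: the paper first proves (\cref{lem:parallelcoprod}) that doubling an undotted component along its blackboard framing replaces its assigned element $z$ by $\Delta_1(z)$ --- this uses (\ref{lem:qhga1}) and (\ref{lem:qhga2}) at crossings and the coassociativity of $\Delta_\alpha$ at dotted components --- and then concludes by $\lambda(z^{(1)})z^{(2)}=\lambda(z)1_{H_1}$. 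Your outline would need to supply this coproduct-doubling lemma. Relatedly, you single out Move III-2 as the hard part, but in the paper that is a short computation from \cref{lem:equatrmat} and the counit axiom; the genuinely graded bookkeeping sits instead in Moves II-6 and III-1, where the product $\Lambda_{\alpha'}^{(1)}\Lambda_{\alpha'^{-1}\alpha\alpha'}$ (resp.\ $\Lambda_\alpha\Lambda_{\alpha'}^{(1)}$) is collapsed to $\varepsilon_{\alpha'}(\Lambda_{\alpha'}^{(1)})\Lambda_{\alpha\alpha'}$ by the two-sided $G$-integral property. These are shifts of emphasis rather than a wrong approach, but the missing ingredient for III-3 is a real one.
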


The proof is given in \cref{ssec:proof}.

\begin{remark}
    When all the dotted components are colored with $1\in G$ (in particular, when $M$ is simply connected or $G$ is trivial), the construction is carried out within the Hopf algebra $H_1$. In this case, the invariant coincides with B{\"a}renz and Barrett's generalized dichromatic invariant~\cite{barenz2018dichromatic} associated with the pivotal functor $\Rep(H_1)\to\Rep(D(H_1))$, where $\Rep(\cdot)$ denotes the category of finite dimensional representations and $D(H_1)$ is the Drinfeld double of $H_1$. The generalized dichromatic invariant is originally defined within a categorical framework, based on a pivotal functor from a spherical fusion category to a ribbon fusion category as algebraic data. However, in the case where the pivotal functor is given as above, the construction is provided using only the language of Hopf algebras in \cite[Section~5]{chaidez20224}. This construction shows the coincidence of our invariant with the generalized dichromatic invariant. Furthermore, by combining this with \cite[Proposition~6.1]{barenz2018dichromatic}, it follows that our invariant also recovers the Crane--Yetter invariant~\cite{crane1993categorical,crane1997state} associated with $\Rep(H_1)$.
\end{remark}

\subsection{Proof of the invariance \texorpdfstring{(\cref{thm:invariance})}{(Theorem 4.2)}}\label[subsection]{ssec:proof}

What we need to show is the following:
\begin{itemize}
    \item Independence of the choice of orientation (\cref{lem:indepori}).
    \item Invariance under the operations listed in \cref{ssec:coloredkirby} (\cref{lem:invkirby}).
\end{itemize}
Note that for the moves in \crefrange{fig:moveonlyL2}{fig:planarkirbymove}, we have to take into account all possible orientations of undotted components.

\begin{lemma}\label[lemma]{lem:indepori}
    The invariant is independent of the choice of orientation of the undotted components.
\end{lemma}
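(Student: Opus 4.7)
The plan is to reduce to reversing the orientation of a single undotted component $\eta$, and then show that the bracket $\langle L,\rho\rangle_\mathcal{H}$ is unchanged; the general statement follows by iterating over components. The argument is local at each ``vertex'' touched by $\eta$, namely intersections with dotted components, self-crossings of $\eta$, and crossings of $\eta$ with other undotted components.

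The key claim I would establish first is that, after a judicious reindexing of the $R$-matrix summations, reversing the orientation of $\eta$ amounts to applying the antipode to every element assigned to $\eta$ while leaving the elements assigned to all other undotted components unchanged. At an intersection with a dotted component colored $\alpha$, the sign $\sigma_q$ flips to $1-\sigma_q$, turning $S_\alpha^{\sigma_q}(\Lambda_\alpha^{(i)})$ into $S_\alpha^{1-\sigma_q}(\Lambda_\alpha^{(i)})$; by the involutory property $S_{\alpha^{-1}}\circ S_\alpha=\id_{H_\alpha}$, the latter is precisely the appropriate component of the antipode applied to the former. At a self-crossing the sign is preserved (both strands reverse), so the coupling remains of the same type; since $(S_1\otimes S_1)(R)=R$ by \cref{lem:proprmat}~(\ref{lem:proprmatc}), the new assignments at the two positions on $\eta$ can be rewritten so that both equal $S_1$ of their old counterparts. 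At a crossing of $\eta$ with another component the sign flips, so the coupling changes from $R$ to $R^{-1}$ (or vice versa); using $(S_1\otimes\id_{H_1})(R)=R^{-1}$ from \cref{lem:proprmat} when $\eta$ is the over strand, and the companion identity $(\id_{H_1}\otimes S_1)(R)=R^{-1}$ (deduced from $(S_1\otimes S_1)(R)=R$ together with involutivity) when $\eta$ is the under strand, one may push the antipode entirely onto $\eta$'s factor while preserving the other component's factor verbatim.

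With these local rewritings, the new product along $\eta$ read in the reversed orientation takes the form $S_{\alpha_n}(x_n)\cdots S_{\alpha_1}(x_1)$, where $x_1,\ldots,x_n$ with $x_i\in H_{\alpha_i}$ and $\alpha_1\cdots\alpha_n=1$ are the old assignments read in the original orientation. By the anti-multiplicativity of the antipode (\cref{lem:prophga}~(\ref{lem:prophgaa})) this equals $S_1(x_1\cdots x_n)=S_1(P_\eta)\in H_1$. Since $\mathcal{H}$ is involutory and of finite type, $H_1$ is a finite-dimensional involutory Hopf algebra, so \cref{lem:ordinvoequation}~(\ref{lem:ordinvoequatione}) yields $\lambda\circ S_1=\lambda$, whence the $\lambda$-evaluation on $\eta$ is unchanged. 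As the products along the other components are untouched by the local analysis above, the full bracket $\langle L,\rho\rangle_\mathcal{H}$ is invariant.

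The hard part will be the bookkeeping at crossings of $\eta$ with other components: naively, swapping $R\leftrightarrow R^{-1}$ affects both strands at the crossing, so one has to show that the change can be entirely absorbed into $\eta$'s factor. The crucial observation is the simultaneous availability of the two representations $R^{-1}=(S_1\otimes\id_{H_1})(R)=(\id_{H_1}\otimes S_1)(R)$; choosing the appropriate one at each such crossing localizes the antipode onto $\eta$'s factor and leaves the other component's assignment (and hence its $\lambda$-evaluation) intact. Once this localization is in place, the rest reduces to anti-multiplicativity of the antipode and the $S$-invariance of $\lambda$.
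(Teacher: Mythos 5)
Your proposal is correct and follows essentially the same route as the paper's proof: reduce to reversing one component $\eta$, show locally that every element assigned to $\eta$ gets replaced by its image under the antipode while the other components' assignments are untouched, and conclude via anti-multiplicativity of $S$ and $\lambda\circ S_1=\lambda$. The only difference is that you spell out the crossing bookkeeping (via the two factorizations $R^{-1}=(S_1\otimes\id_{H_1})(R)=(\id_{H_1}\otimes S_1)(R)$ and $(S_1\otimes S_1)(R)=R$ for self-crossings), which the paper compresses into a single sentence.
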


\begin{proof}
    Let $L'$ be an oriented $G$-colored Kirby diagram obtained from $L=L_1\sqcup L_2$ by reversing the orientation of one undotted component, say $\eta\in L_2$. Since $L$ and $L'$ have the same number of each component, it suffices to show that $\langle L,\rho\rangle_\mathcal{H}=\langle L',\rho\rangle_\mathcal{H}$.

    In the diagram $L$, when $\eta$ passes the $i$-th position from the left on the disk bounded by an undotted component colored with $\alpha$, it receives $\Lambda_\alpha^{(i)}$ if it goes downward, and $S_\alpha(\Lambda_\alpha^{(i)})$ if it goes upward. In the diagram $L'$, at this time, $\eta$ receives $S_\alpha(\Lambda_\alpha^{(i)})$ and $\Lambda_\alpha^{(i)}=S_{\alpha^{-1}}(S_\alpha(\Lambda_\alpha^{(i)}))$, respectively. Note that the other components passing through the disk receive the same elements in both diagrams. Next, considering a crossing of undotted components, it easily follows from the assignment rule and $\mathcal{H}$ being involutory that if $\eta$ in $L$ receives an element $z$, then $\eta$ in $L'$ receives $S_1(z)$, and the other component receives the same element in both diagrams. As a result of this observation, if $\eta$ in $L$ is assigned elements $z_1\in H_{\alpha_1},\ldots, z_r\in H_{\alpha_r}$ along its orientation, then the contribution of $\eta$ in $L$ is $\lambda(z_1\cdots z_r)$, and that in $L'$ is also
    \begin{align*}
        \lambda(S_{\alpha_r}(z_r)\cdots S_{\alpha_1}(z_1)) &= \lambda(S_1(z_1\cdots z_r)) && \text{(\Cref{lem:prophga}~(\ref{lem:prophgaa}) and $\alpha_1\cdots\alpha_r=1$)}\\
        &= \lambda(z_1\cdots z_r) && \text{(\Cref{lem:ordinvoequation}~(\ref{lem:ordinvoequatione}))}.
    \end{align*}
    Since the contributions of the other components are the same, we have $\langle L,\rho\rangle_\mathcal{H}=\langle L',\rho\rangle_\mathcal{H}$.
\end{proof}

The above discussion of the orientation choice can also be applied to the subsequent proof of the invariance. Namely, changing the orientation of an undotted component results in applying the antipode to all the assigned elements on it and taking the product in the reverse order of the original orientation. Since $\lambda S_1=\lambda$, the contribution of the component remains the same. Therefore, proving invariance for one orientation case automatically extends to any other orientation cases. Hence, in the following, we will focus on one specific case, without loss of generality.

\begin{lemma}\label[lemma]{lem:invkirby}
    The invariant does not change under the operations described in \cref{ssec:coloredkirby}.
\end{lemma}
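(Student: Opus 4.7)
The plan is a case-by-case verification: for each operation listed in \cref{ssec:coloredkirby}, we argue that everything outside a small local disk is unchanged, and reduce invariance of the bracket $\langle L,\rho\rangle_{\mathcal{H}}$ to an algebraic identity in suitable tensor powers of the $H_\alpha$'s. The cyclicity result \cref{lem:cyccointhga} ensures that these local identities may be checked with any convenient cut point on each undotted component. Since isotopies, the moves I-1--I-5 and II-1--II-6, and the global conjugation leave $|L_1|$ and $|L_2|$ unchanged, it is enough to prove invariance of the bracket for them; for the Kirby moves III-1--III-5, we additionally track the change in the normalization factor $\dim(H_1)^{|L_1|-|L_2|}$.

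For moves involving only undotted components (I-1--I-5), the proof collapses to well-known identities for the quasitriangular Hopf algebra $H_1$: a Reidemeister-II-type move is $R R^{-1} = 1$; a Reidemeister-III-type move is the quantum Yang--Baxter equation (\cref{lem:proprmat}~(\ref{lem:proprmatd})); a cap/cup move uses $(S \otimes S)(R) = R$ (\cref{lem:proprmat}~(\ref{lem:proprmatc})) together with $\lambda\circ S = \lambda$ (\cref{lem:ordinvoequation}~(\ref{lem:ordinvoequatione})); and any framing-compatible ``ribbon'' move uses the ribbon structure of $(H_1, R, u)$ from \cref{lem:invoribbon}, in particular $S(u)=u$ and (\ref{def:r3}).

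For the mixed moves II-1--II-6, one slides a strand across a dotted disk or changes the way it threads it. The key algebraic inputs are the defining identities of the two-sided $G$-integral $\Lambda$ (absorbing a strand that enters and exits a disk only locally); the identity $S_\alpha(\Lambda_\alpha) = \Lambda_{\alpha^{-1}}$ from \cref{lem:intantihga} (for a strand that reverses its vertical direction through the disk); coassociativity \cref{def:hga3} together with the cocommutativity of $\Lambda_\alpha$ (the $G$-integral analog of \cref{lem:ordinvoequation}, available since $\mathcal{H}$ is unimodular by \cref{lem:invounihga}); and the quasitriangularity relation (\ref{lem:qhga3}), $R\Delta_\alpha(x) = \Delta_\alpha^{\mathrm{cop}}(x)R$, to commute an $R$-matrix past a $\Lambda$-decoration. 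The simultaneous conjugation of all colors by $\beta \in G$ is handled by applying $\varphi_\beta^{\alpha_i}$ at every $\Lambda$-decoration: invariance of the bracket is then \cref{lem:crossinthga}~(\ref{lem:crossinthgaa}) and (\ref{lem:crossinthgab}), combined with $(\varphi_\beta^1 \otimes \varphi_\beta^1)(R) = R$ (condition (\ref{lem:qhga4})).

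The Kirby moves split into handle slides (III-1, III-2, III-3) and cancelling pairs (III-4, III-5). Each handle slide is a local rewriting of $\Delta_\alpha$ along a strand, proved by combining the multiplicativity of $\Delta_{\alpha\beta}\circ m_{\alpha,\beta}$ with \cref{lem:equatrmat}; these preserve $|L_1|$ and $|L_2|$ so the normalization is unaffected. A $1$-$2$ cancelling pair removes one dotted and one undotted component simultaneously (leaving $|L_1|-|L_2|$ unchanged), while the local factor is $\lambda(\Lambda_1) = 1$. A $2$-$3$ cancelling pair removes one undotted component, so the normalization changes by a factor $\dim(H_1)$, which is compensated exactly by the local factor $\lambda(1_{H_1})$, using $\dim(H_1) = \lambda(1_{H_1})\varepsilon_1(\Lambda_1)$ from \cref{lem:ordinvoequation}~(\ref{lem:ordinvoequationa}) and the normalization $\varepsilon_1(\Lambda_1) = 1$. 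The main obstacle will be the type-II moves where a strand changes its passage through a dotted disk, because one must track exactly which tensor factor of $\Delta_\alpha^{(k-1)}(\Lambda_\alpha)$ acquires which power of $S_\alpha$ and thread this bookkeeping through (\ref{lem:qhga3}); the remaining identities are essentially local and follow the pattern of the classical Hennings construction.
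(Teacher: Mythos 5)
Your plan follows the paper's proof essentially step for step: a case-by-case check of each operation, reduction of each move to a local algebraic identity (with the cut point freed up by \cref{lem:cyccointhga}), the normalization factor tracked only for the cancelling pairs, and the coloring change handled through the crossing via \cref{lem:crossinthga} and (\ref{lem:qhga4}). The ingredients you name for the type-I moves, the cancelling pairs, and the conjugation are exactly the ones the paper uses.

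Two places where the ingredients you list do not quite close the argument. First, for the $2$-$2$-handle slide (Move III-3), multiplicativity of the coproduct together with \cref{lem:equatrmat} only yields the paper's intermediate claim (\cref{lem:parallelcoprod}) that doubling a component along its blackboard framing replaces its decoration $z$ by $z^{(1)}\otimes z^{(2)}$; the slide is then completed by the defining property of the integral $\lambda$, namely $\lambda(z^{(1)})z^{(2)}=\lambda(z)1_{H_1}$, which your outline never invokes and without which the slid-over copy does not disappear. Second, you appeal to cocommutativity of $\Lambda_\alpha$ for the type-II moves, but the paper neither proves nor needs a $G$-graded analogue of part (b) of \cref{lem:ordinvoequation}: in Move II-2 the transposition produced by $\Delta_\alpha^{\mathrm{cop}}$ via (\ref{lem:qhga3}) is absorbed geometrically by the crossing present in the target diagram, not algebraically by cocommutativity. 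Neither issue changes the strategy, but both would need repair in a full write-up; you should also make explicit that the $1$-$1$-handle slide (and Move II-6) changes the color of a dotted component to a product or conjugate, so that the identity $\Lambda_{\alpha'}^{(1)}\Lambda_{\alpha'^{-1}\alpha\alpha'}=\varepsilon_{\alpha'}(\Lambda_{\alpha'}^{(1)})\Lambda_{\alpha\alpha'}$, i.e.\ the genuinely graded part of the $G$-integral axiom, is where that case is actually decided.
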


\begin{proof}
    Because the operations other than Move~\hyperref[fig:III-5]{III-5} do not alter the normalization term, it suffices to show that the bracket does not change under these operations. Only for Move~\hyperref[fig:III-5]{III-5} do we need to take the coefficient effect into account.\vspace{0.2in}
    
    \noindent\textbf{Isotopies:} This is obvious because isotopies do not change any information regarding crossings.\vspace{0.2in}

    \noindent\textbf{Move~\hyperref[fig:I-1]{I-1}:} In both figures, $1_{H_1}$ is assigned.\vspace{0.2in}

    \noindent\textbf{Move~\hyperref[fig:I-2]{I-2}:} Assume that each component is oriented downward. The element assigned to the leftmost figure is
    \begin{equation*}
        S_1(a_i)a_j\otimes b_ib_j = R^{-1}R = 1_{H_1}\otimes 1_{H_1},
    \end{equation*}
    and that assigned to the rightmost figure is also
    \begin{equation*}
        a_iS_1(a_j)\otimes b_ib_j = RR^{-1} = 1_{H_1}\otimes 1_{H_1}.
    \end{equation*}
    Both are the same as the element assigned to the middle figure.\vspace{0.2in}

    \noindent\textbf{Move~\hyperref[fig:I-3]{I-3}:} Assume that each component is oriented downward. The element assigned to the left figure is
    \begin{equation*}
        a_ja_k\otimes a_ib_k\otimes b_ib_j = R_{23}R_{13}R_{12},
    \end{equation*}
    and that assigned to the right figure is
    \begin{equation*}
        a_ia_j\otimes b_ia_k\otimes b_jb_k = R_{12}R_{13}R_{23}.
    \end{equation*}
    They are equal by \cref{lem:proprmat}~(\ref{lem:proprmatd}) (the quantum Yang--Baxter equation).\vspace{0.2in}

    \noindent\textbf{Move~\hyperref[fig:I-4]{I-4}:} Under this move, the sign of the crossing does not change, and hence neither do the assigned elements.\vspace{0.2in}

    \noindent\textbf{Move~\hyperref[fig:I-5]{I-5}:} Assume that each component is oriented downward. For the first move, the element assigned to the left figure is
    \begin{align*}
        a_ib_i &= S_1(a_i)S_1(b_i) &&\text{(\Cref{lem:proprmat}~(\ref{lem:proprmatc}))}\\
        &= S_1(b_ia_i) &&\text{(\Cref{lem:prophga}~(\ref{lem:prophgaa}))}\\
        &= S_1(u^{-1}) &&\text{(\Cref{lem:drinfeldprop}~(\ref{lem:drinfeldpropa}) and $S_1^2=\id_{H_1}$)}\\
        &= u^{-1} &&\text{(\Cref{lem:invoribbon})},
    \end{align*}
    where $u$ is the Drinfeld element of $(H_1,R)$, and that assigned to the right figure is also
    \begin{equation*}
        b_ia_i = u^{-1}.
    \end{equation*}
    Similarly, for the second move, the element assigned to the left figure is
    \begin{align*}
        b_iS_1(a_i) &= S_1(b_i)a_i &&\text{(\Cref{lem:proprmat}~(\ref{lem:proprmatc}) and $S_1^2=\id_{H_1}$)}\\
        &= u,
    \end{align*}
    and that assigned to the right figure is also
    \begin{align*}
        S_1(a_i)b_i &= S_1(S_1(a_i))S_1(b_i) &&\text{(\Cref{lem:proprmat}~(\ref{lem:proprmatc}))}\\
        &= S_1(b_iS_1(a_i)) &&\text{(\Cref{lem:prophga}~(\ref{lem:prophgaa}))}\\
        &= S_1(u)\\
        &= u &&\text{(\Cref{lem:invoribbon})}.
    \end{align*}
    \vspace{0.1in}

    \noindent\textbf{Move~\hyperref[fig:II-1]{II-1}:} Assume that each component is oriented as shown in \cref{fig:II-1orientation}. The element assigned to the left figure is
    \begin{align*}
        \MoveEqLeft\Lambda_\alpha^{(1)}\otimes\cdots\otimes\Lambda_\alpha^{(i-1)}\otimes S_\alpha(\Lambda_\alpha^{(i)})\Lambda_\alpha^{(i+1)}\otimes\Lambda_\alpha^{(i+2)}\otimes\cdots\otimes\Lambda_\alpha^{(k)}\\
        ={}& \Lambda_\alpha^{(1)}\otimes\ldots\otimes\Lambda_\alpha^{(i-1)}\otimes\varepsilon_\alpha(\Lambda_\alpha^{(i)})1_{H_1}\otimes\Lambda_\alpha^{(i+1)}\otimes\cdots\otimes\Lambda_\alpha^{(k-1)} && \text{(\ref{def:hga9})}\\
        ={}& \Lambda_\alpha^{(1)}\otimes\ldots\otimes\Lambda_\alpha^{(i-1)}\otimes 1_{H_1}\otimes\Lambda_\alpha^{(i)}\otimes\cdots\otimes\Lambda_\alpha^{(k-2)} && \text{(\ref{def:hga4})},
    \end{align*}
    which is the element assigned to the right figure.\vspace{0.2in}

    \begin{figure}
        \centering
        \includegraphics[scale=1]{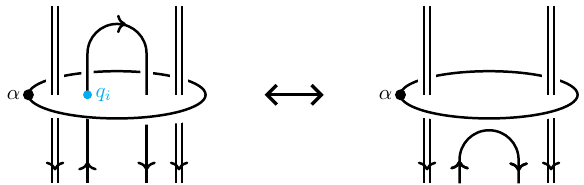}
        \caption{Orientation in Move~\hyperref[fig:II-1]{II-1}}
        \label{fig:II-1orientation}
    \end{figure}

    \noindent\textbf{Move~\hyperref[fig:II-2]{II-2}:} Assume that each component is oriented downward. The element assigned to the left figure is
    \begin{align*}
        \MoveEqLeft\Lambda_\alpha^{(1)}\otimes\cdots\otimes\Lambda_\alpha^{(i-1)}\otimes a_j\Lambda_\alpha^{(i)}\otimes b_j\Lambda_\alpha^{(i+1)}\otimes\Lambda_\alpha^{(i+2)}\otimes\cdots\otimes\Lambda_\alpha^{(k)}\\
        ={}& \Lambda_\alpha^{(1)}\otimes\cdots\otimes\Lambda_\alpha^{(i-1)}\otimes R\Delta_\alpha(\Lambda_\alpha^{(i)})\otimes\Lambda_\alpha^{(i+1)}\otimes\cdots\otimes\Lambda_\alpha^{(k-1)}\\
        ={}& \Lambda_\alpha^{(1)}\otimes\cdots\otimes\Lambda_\alpha^{(i-1)}\otimes\Delta_\alpha^{\mathrm{cop}}(\Lambda_\alpha^{(i)})R\otimes\Lambda_\alpha^{(i+1)}\otimes\cdots\otimes\Lambda_\alpha^{(k-1)} && \text{(\ref{lem:qhga3})}\\
        ={}& \Lambda_\alpha^{(1)}\otimes\cdots\otimes\Lambda_\alpha^{(i-1)}\otimes\Lambda_\alpha^{(i+1)} a_j\otimes\Lambda_\alpha^{(i)} b_j\otimes\Lambda_\alpha^{(i+2)}\otimes\cdots\otimes\Lambda_\alpha^{(k)} &&\text{\Cref{lem:proprmat}~(\ref{lem:proprmata})},
    \end{align*}
    which is the element assigned to the right figure.\vspace{0.2in}

    \noindent\textbf{Move~\hyperref[fig:II-3]{II-3}:} Assume that each of the components passing through the disk is oriented downward, and the other one is oriented upward. The element assigned to the left figure is
    \begin{align*}
        \MoveEqLeft \Lambda_\alpha^{(1)}a_{i_1}\otimes\cdots\otimes\Lambda_\alpha^{(k)}a_{i_k}\otimes b_{i_1}\cdots b_{i_k}\\
        ={}& (\Delta_\alpha^{(k-1)}(\Lambda_\alpha)\otimes 1_{H_1})(a_{i_1}\otimes\cdots\otimes a_{i_n}\otimes b_{i_1}\cdots b_{i_k})\\
        ={}& (\Delta_\alpha^{(k-1)}(\Lambda_\alpha)\otimes 1_{H_1})(a_i^{(1)}\otimes\cdots\otimes a_i^{(k)}\otimes b_{i}) && \text{(\Cref{lem:equatrmat}~(\ref{lem:equatrmata}))}\\
        ={}& (\Delta_\alpha^{(k-1)}(\Lambda_\alpha)\otimes 1_{H_1})(\Delta_1^{(k-1)}(a_i)\otimes b_i)\\
        ={}& (\Delta_1^{(k-1)}(a_i)\otimes b_i)(\Delta_\alpha^{(k-1)}(\Lambda_\alpha)\otimes 1_{H_1})\\
        ={}& (a_i^{(1)}\otimes\cdots\otimes a_i^{(k)}\otimes b_{i})(\Delta_\alpha^{(k-1)}(\Lambda_\alpha)\otimes 1_{H_1})\\
        ={}& a_{i_1}\Lambda_\alpha^{(1)}\otimes a_{i_k}\Lambda_\alpha^{(k)}\otimes b_{i_1}\cdots b_{i_k} && \text{(\Cref{lem:equatrmat}~(\ref{lem:equatrmata}))},
    \end{align*}
    which is the element assigned to the right figure.\vspace{0.2in}
    
    \noindent\textbf{Move~\hyperref[fig:II-4]{II-4}:} Assume that each component is oriented downward. The element assigned to the left figure is
    \begin{align*}
        \MoveEqLeft\Lambda_\alpha^{(1)}b_{i_1}\otimes\cdots\otimes\Lambda_\alpha^{(k)}b_{i_k}\otimes a_{i_k}\cdots a_{i_1}\\
        ={}& (\Delta_\alpha^{(k-1)}(\Lambda_\alpha)\otimes 1_{H_1})(b_{i_1}\otimes\cdots\otimes b_{i_k}\otimes a_{i_k}\cdots a_{i_1})\\
        ={}& (\Delta_\alpha^{(k-1)}(\Lambda_\alpha)\otimes 1_{H_1})(b_i^{(1)}\otimes\cdots\otimes b_i^{(k)}\otimes a_i) && \text{(\Cref{lem:equatrmat}~(\ref{lem:equatrmatb}))}\\
        ={}& (\Delta_\alpha^{(k-1)}(\Lambda_\alpha)\otimes 1_{H_1})(\Delta_1^{(k-1)}(b_i)\otimes a_i)\\
        ={}& (\Delta_1^{(k-1)}(b_i)\otimes a_i)(\Delta_\alpha^{(k-1)}(\Lambda_\alpha)\otimes 1_{H_1})\\
        ={}& (b_i^{(1)}\otimes\cdots\otimes b_i^{(k)}\otimes a_i)(\Delta_\alpha^{(k-1)}(\Lambda_\alpha)\otimes 1_{H_1})\\
        ={}& b_{i_1}\Lambda_\alpha^{(1)}\otimes\cdots\otimes b_{i_k}\Lambda_\alpha^{(k)}\otimes a_{i_k}\cdots a_{i_1} && \text{(\Cref{lem:equatrmat}~(\ref{lem:equatrmatb}))},
    \end{align*}
    which is the element assigned to the right figure.\vspace{0.2in}

    \noindent\textbf{Move~\hyperref[fig:II-5]{II-5}:} Assume that each component is oriented as shown in \cref{fig:II-5orientation}. The element assigned to the right figure is
    \begin{align*}
        \MoveEqLeft S_{\alpha^{-1}}(\Lambda_{\alpha^{-1}}^{(k)})\otimes\cdots\otimes S_{\alpha^{-1}}(\Lambda_{\alpha^{-1}}^{(1)})\\
        ={}& (S_{\alpha^{-1}}\otimes\cdots\otimes S_{\alpha^{-1}})(\Delta_{\alpha^{-1}}^{\mathrm{cop}(k-1)}(\Lambda_{\alpha^{-1}}))\\
        ={}& \Delta_\alpha^{(k-1)}(S_{\alpha^{-1}}(\Lambda_{\alpha^{-1}})) &&\text{(\Cref{lem:prophga}~(\ref{lem:prophgac}))} \\
        ={}& \Delta_\alpha^{(k-1)}(\Lambda_\alpha) &&\text{(\Cref{lem:intantihga})},
    \end{align*}
    which is the element assigned to the left figure.\vspace{0.2in}
    
    \begin{figure}
        \centering
        \includegraphics[scale=1]{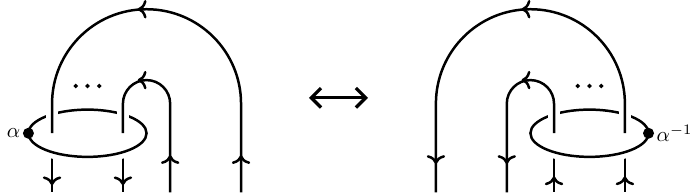}
        \caption{Orientation in Move~\hyperref[fig:II-5]{II-5}}
        \label{fig:II-5orientation}
    \end{figure}

    \hypertarget{invarianceII-6}
    \noindent\textbf{Move~\hyperref[fig:II-6]{II-6}:} Assume that each component is oriented downward. Assume further that, in the left figure, the number of undotted component passing through the large dotted component is $k+l$, of which $k$ also pass through the small dotted component. The element assigned to the left figure is
    \begin{align*}
        \MoveEqLeft\Lambda_\alpha^{(1)}\Lambda_{\alpha'}^{(1)}\otimes\cdots\otimes\Lambda_\alpha^{(k)}\Lambda_{\alpha'}^{(k)}\otimes\Lambda_{\alpha'}^{(k+1)}\otimes\cdots\otimes\Lambda_{\alpha'}^{(k+l)}\\
        ={}& (\Delta_\alpha^{(k-1)}(\Lambda_\alpha)\otimes 1_{H_1}\otimes\cdots\otimes 1_{H_1})\Delta_{\alpha'}^{(k+l-1)}(\Lambda_{\alpha'})\\
        ={}& (\Delta_\alpha^{(k-1)}(\Lambda_\alpha)\otimes 1_{H_1}\otimes\cdots\otimes 1_{H_1})(\Delta_{\alpha'}^{(k-1)}(\Lambda_{\alpha'}^{(1)})\otimes\Delta_{\alpha'}^{(l-1)}(\Lambda_{\alpha'}^{(2)}))\\
        ={}& \Delta_{\alpha\alpha'}^{(k-1)}(\Lambda_\alpha\Lambda_{\alpha'}^{(1)})\otimes\Delta_{\alpha'}^{(l-1)}(\Lambda_{\alpha'}^{(2)})\\
        ={}& \Delta_{\alpha\alpha'}^{(k-1)}(\Lambda_{\alpha\alpha'})\otimes\Delta_{\alpha'}^{(l-1)}(\varepsilon_{\alpha'}(\Lambda_{\alpha'}^{(1)})\Lambda_{\alpha'}^{(2)})\\
        ={}& \Delta_{\alpha\alpha'}^{(k-1)}(\Lambda_{\alpha\alpha'})\otimes\Delta_{\alpha'}^{(l-1)}(\Lambda_{\alpha'}) && \text{(\ref{def:hga4})},
    \end{align*}
    and that assigned to the right figure is
    \begin{align*}
      \MoveEqLeft\Lambda_{\alpha'}^{(1)}\Lambda_{\alpha'^{-1}\alpha\alpha'}^{(1)}\otimes\cdots\otimes\Lambda_{\alpha'}^{(k)}\Lambda_{\alpha'^{-1}\alpha\alpha'}^{(k)}\otimes\Lambda_{\alpha'}^{(k+l)}\otimes\cdots\otimes\Lambda_{\alpha'}^{(l)}\\
      ={}& \Delta_{\alpha'}^{(k+l-1)}(\Lambda_\alpha')(\Delta_{\alpha'^{-1}\alpha\alpha'}^{(k-1)}(\Lambda_{\alpha'^{-1}\alpha\alpha'})\otimes 1_{H_1}\otimes\cdots\otimes 1_{H_1})\\
      ={}& (\Delta_{\alpha'}^{(k-1)}(\Lambda_{\alpha'}^{(1)})\otimes\Delta_{\alpha'}^{(l-1)}(\Lambda_{\alpha'}^{(2)}))\cdot(\Delta_{\alpha'^{-1}\alpha\alpha'}^{(k-1)}(\Lambda_{\alpha'^{-1}\alpha\alpha'})\otimes 1_{H_1}\otimes\cdots\otimes 1_{H_1})\\
      ={}& \Delta_{\alpha\alpha'}^{(k-1)}(\Lambda_{\alpha'}^{(1)}\Lambda_{\alpha'^{-1}\alpha\alpha'})\otimes\Delta_{\alpha'}^{(l-1)}(\Lambda_{\alpha'}^{(2)})\\
      ={}& \Delta_{\alpha\alpha'}^{(k-1)}(\Lambda_{\alpha\alpha'})\otimes\Delta_{\alpha'}^{(l-1)}(\varepsilon_{\alpha'}(\Lambda_{\alpha'}^{(1)})\Lambda_{\alpha'}^{(2)})\\
      ={}& \Delta_{\alpha\alpha'}^{(k-1)}(\Lambda_{\alpha\alpha'})\otimes\Delta_{\alpha'}^{(l-1)}(\Lambda_{\alpha'}) && \text{(\ref{def:hga4})}.
    \end{align*}
    \vspace{0.2in}

    \noindent\textbf{Move~\hyperref[fig:III-1]{III-1}:} Assume that each component is oriented downward. Assume further that, in the left figure, the number of undotted components passing through the left and right dotted components is $k$ and $l$, respectively. The element assigned to the right figure is, by the calculation in Move~\hyperref[fig:II-6]{II-6},
    \begin{equation*}
        \Lambda_\alpha^{(1)}\Lambda_{\alpha'}^{(1)}\otimes\cdots\otimes\Lambda_\alpha^{(k)}\Lambda_{\alpha'}^{(k)}\otimes\Lambda_{\alpha'}^{(k+1)}\otimes\cdots\otimes\Lambda_{\alpha'}^{(k+l)} = \Delta_{\alpha\alpha'}^{(k-1)}(\Lambda_{\alpha\alpha'})\otimes\Delta_{\alpha'}^{(l-1)}(\Lambda_{\alpha'}),
    \end{equation*}
    which is the element assigned to the left figure.\vspace{0.2in}
    
    \noindent\textbf{Move~\hyperref[fig:III-2]{III-2}:} Assume that each component is oriented downward. The element assigned to the right figure is
    \begin{align*}
        \MoveEqLeft b_{i_1}\cdots b_{i_k}a_{j_k}\cdots a_{j_1}\otimes a_{i_1}\Lambda_\alpha^{(1)}b_{j_1}\otimes\cdots\otimes a_{i_k}\Lambda_\alpha^{(k)}b_{j_k}\\
        ={}& b_ia_j\otimes a_i^{(1)}\Lambda_\alpha^{(1)}b_j^{(1)}\otimes\cdots\otimes a_i^{(k)}\Lambda_\alpha^{(k)}b_j^{(k)} && \text{(\Cref{lem:equatrmat}~(\ref{lem:equatrmata}),~(\ref{lem:equatrmatb}))}\\
        ={}& b_ia_j\otimes\Delta_\alpha^{(k-1)}(a_i\Lambda_\alpha b_j)\\
        ={}& \varepsilon_\alpha(a_i)b_ia_j\varepsilon_\alpha(b_j)\otimes\Delta_\alpha^{(k-1)}(\Lambda_\alpha)\\
        ={}& 1_{H_1}\otimes\Delta_\alpha^{(k-1)}(\Lambda_\alpha) && \text{(\ref{def:hga4})},
    \end{align*}
    which is the element assigned to the left figure.\vspace{0.2in}
    
    \noindent\textbf{Move~\hyperref[fig:III-3]{III-3}:} The following observation is essential:

    \begin{lemma}\label[lemma]{lem:parallelcoprod}
        Let $L'$ be an oriented $G$-colored Kirby diagram obtained from $L=L_1\sqcup L_2$ by replacing one undotted component $\eta\in L_2$ with two parallel copies $\eta'$ and $\eta''$, taken along the blackboard framing. Suppose that $\eta$ is assigned an element $z\in H_1$ as a result of the procedure. Then $\eta'$ and $\eta''$ are assigned $z^{(1)}$ and $z^{(2)}$, respectively, where $\Delta_1(z)=z^{(1)}\otimes z^{(2)}$.
    \end{lemma}

    \begin{proof}
        First of all, note that the parallel copies $\eta'$ and $\eta''$ have the same linking with the other components of $L$ as does $\eta$, and in particular they determine the same relation in the fundamental group. Hence, the $G$-coloring is not affected by this replacement.
        
        We then look at each local assignment. At an intersection with the disk bounded by a dotted component with a color $\alpha$, if $\eta$ passes downward through the disc at the $i$-th position, it receives $\Lambda_{\alpha}^{(i)}$. In this case, the copies $\eta'$ and $\eta''$ pass through the $i$-th and $(i+1)$-th positions, respectively, and hence receive $\Lambda_{\alpha}^{(i)(1)}$ and $\Lambda_{\alpha}^{(i)(2)}$. Here we use 
        \begin{equation*}
            \Delta_\alpha(\Lambda_\alpha^{(i)})=\Lambda_{\alpha}^{(i)(1)}\otimes\Lambda_{\alpha}^{(i)(2)}=\Lambda_{\alpha}^{(i)}\otimes\Lambda_{\alpha}^{(i+1)}.
        \end{equation*}
        If $\eta$ goes upward, it receives $S_\alpha(\Lambda_\alpha^{(i)})$. In this case, $\eta'$ and $\eta''$ are at the $(i+1)$-th and $i$-th positions, respectively, and hence receive $S_\alpha(\Lambda_{\alpha}^{(i)})^{(1)}$ and $S_\alpha(\Lambda_{\alpha}^{(i)})^{(2)}$ since
        \begin{equation*}
            (S_\alpha\otimes S_\alpha)(\Delta_\alpha(\Lambda_\alpha^{(i)})) = S_\alpha(\Lambda_{\alpha}^{(i)(1)})\otimes S_\alpha(\Lambda_{\alpha}^{(i)(2)})=S_\alpha(\Lambda_{\alpha}^{(i)})^{(2)}\otimes S_\alpha(\Lambda_{\alpha}^{(i)})^{(1)}.
        \end{equation*}
        In both cases, the assignment to the other components passing through this disk does not change.
        
        At a crossing with an undotted component, say, $\eta$ is the over strand at a positive crossing (\cref{fig:localassign}). Then $\eta$ receives $a_i$ and the other strand receives $b_i$ (where $R=a_i\otimes b_i$). subsequently, $\eta'$ and $\eta''$ receive $a_i$ and $a_j$, respectively, and the other strand receives correspondingly $b_i$ and $b_j$. Then by (\ref{lem:qhga1}), we have
        \begin{equation*}
            a_i\otimes a_j\otimes b_ib_j = R_{13}R_{23} = a_i^{(1)}\otimes a_i^{(2)}\otimes b_i,
        \end{equation*}
        so that the elements assigned to $\eta'$, $\eta''$, and the other strand are $a_i^{(1)}$, $a_i^{(2)}$, and $b_i$, respectively. Other cases are treated similarly by using (\ref{lem:qhga1}) and (\ref{lem:qhga2}).
        
        Overall, whenever $\eta$ receives an element $x$, the copies $\eta'$ and $\eta''$ receive $x^{(1)}$ and $x^{(2)}$, respectively. Since the product and coproduct are compatible, i.e., $(xy)^{(1)}\otimes(xy)^{(2)}=x^{(1)}y^{(1)}\otimes x^{(2)}y^{(2)}$, the lemma follows.
    \end{proof}
    
    \begin{figure}
        \centering
        \begin{subcaptionblock}{.48\textwidth}
            \centering
            \includegraphics[scale=1]{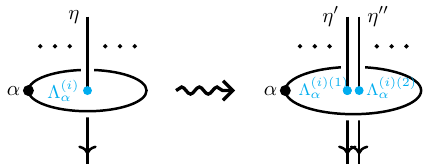}
        \end{subcaptionblock}\hfill
        \begin{subcaptionblock}{.48\textwidth}
            \centering
            \includegraphics[scale=1]{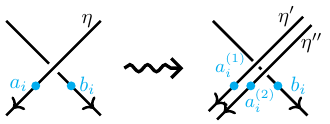}
        \end{subcaptionblock}
        \caption{Local assignment to $\eta$, $\eta'$, and $\eta''$}
        \label{fig:localassign}
    \end{figure}

    Now we proceed to prove the claim. Assume that each component is oriented as shown in \cref{fig:III-3orientation}. Suppose that in the left figure, the right component (the one over which the other slides) is assigned $z\in H_1$ as a result of the procedure. Then the contribution in the left figure is $\lambda(z)1_{H_1}$, and that in the right figure is also
    \begin{equation*}
        \lambda(z^{(1)})z^{(2)} = \lambda(z)1_{H_1}
    \end{equation*}
    by \cref{lem:parallelcoprod} and the definition of the integral.
    \vspace{0.1in}

    \begin{figure}
        \centering
        \includegraphics[scale=1]{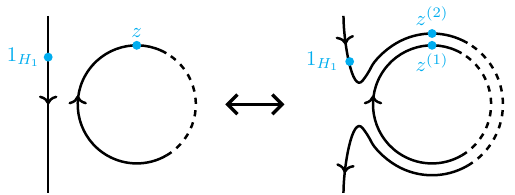}
        \caption{Orientation in Move~\hyperref[fig:III-3]{III-3}}
        \label{fig:III-3orientation}
    \end{figure}
    
    \noindent\textbf{Move~\hyperref[fig:III-4]{III-4}:} We first observe that in a 1-2-canceling pair, the dotted component is colored with $1\in G$ and the undotted component does not intertwine with any other components except for this dotted one. Assume that the undotted component passes downward through the disk. Suppose that the product of the elements resulting from self-intersections of it (in the dashed area), multiplied along the orientation, is $z\in H_1$, so that it is assigned $z\Lambda_1$ as a result. Then the contribution in the left figure is
    \begin{equation*}
        \lambda(z\Lambda_1) = \varepsilon_1(z)\lambda(\Lambda_1) = \varepsilon_1(z).
    \end{equation*}
    Because $z$ is written as the product of the components of the universal $R$-matrix, we can conclude that $\varepsilon_1(z)=1$ by (\ref{def:hga6}) and \cref{lem:proprmat}~(\ref{lem:proprmata}).\vspace{0.2in}
    
    \noindent\textbf{Move~\hyperref[fig:III-5]{III-5}:} This is the only case where we need to consider the normalization coefficient. First note that the $3$-handles (and also $4$-handles) do not contribute to the invariant; their effect may be interpreted as multiplication by the scalar $1\in H_1$. Then the contribution in the left figure is, by \cref{lem:ordinvoequation}~(\ref{lem:ordinvoequationa}) and the assumption $\varepsilon_1(\Lambda_1)=1$,
    \begin{equation*}
        \lambda(1_{H_1}) = \dim(H_1),
    \end{equation*}
    which is canceled out by the normalization term because the number of undotted components is reduced by 1.\vspace{0.2in}

    \noindent\textbf{Coloring change:} Assume that the $G$-coloring changes to its conjugate by $\beta\in G$. We will show $\langle L,\rho\rangle_\mathcal{H}=\langle L,\beta\rho\beta^{-1}\rangle_\mathcal{H}$.

    Suppose that before changing the $G$-coloring, $z_1,\ldots,z_n\in H_1\; (n=|L_2|)$ are assigned to each undotted component of $L$ as a result of the procedure, so that the bracket is given by $\langle L,\rho\rangle_\mathcal{H}=\lambda(z_1)\cdots\lambda(z_n)$. Changing the $G$-coloring from $\rho$ to $\beta\rho\beta^{-1}$ transforms the color $\alpha$ of a dotted component into $\beta\alpha\beta^{-1}$. This leads to the replacement of the associated $G$-integral $\Lambda_\alpha$ with $\Lambda_{\beta\alpha\beta^{-1}}=\varphi_\beta^\alpha(\Lambda_\alpha)$, by \cref{lem:crossinthga}~(\ref{lem:crossinthgaa}). Furthermore, by (\ref{lem:qhga4}), the universal $R$-matrix can be viewed as $R=(\varphi_\beta^1\otimes\varphi_\beta^1)(R)$. We can therefore deduce from \cref{def:crossinghga} and \cref{lem:crosshga} that after changing the $G$-coloring, $z_1,\ldots,z_n$ become $\varphi_\beta^1(z_1),\ldots,\varphi_\beta^1(z_n)$, respectively. Hence, we have
    \begin{align*}
        \langle L,\beta\rho\beta^{-1}\rangle_\mathcal{H} &= \lambda(\varphi_\beta^1(z_1))\cdots\lambda(\varphi_\beta^1(z_n))\\
        &= \lambda(z_1)\cdots\lambda(z_n) &&\text{(\Cref{lem:crossinthga}~(\ref{lem:crossinthgab}))}\\
        &= \langle L,\rho\rangle_\mathcal{H}.
    \end{align*}
    
    This completes the proof of the lemma.
\end{proof}

\begin{notation}
    Since the invariance has been proved, we will use the notation $I_{\mathcal{H}}(M,\rho)$ to denote the invariant for $(M,\rho)$, instead of $I_{\mathcal{H}}(M,\rho,L)$.
\end{notation}

\subsection{Connected sum property and induced 4-manifold invariants}

\subsubsection{Behavior of the invariant under connected sum}

We show that the invariant is multiplicative with respect to connected sum.

Let $\rho\colon\pi_1(M)\to G$ and $\rho'\colon\pi_1(M')\to G$ be two flat $G$-connections. We take a connected sum $M\# M'$ of $M$ and $M'$ so that the orientation of $M\# M'$ is compatible with those of $M$ and $M'$. Since the gluing is performed along $\partial D^4\cong S^3$, which is simply connected, the Seifert--van Kampen theorem ensures that there is a unique homomorphism $\rho\#\rho'\colon\pi_1(M\# M')\to G$ that makes the following diagram commute:
\begin{equation*}
    \begin{tikzcd}
        \pi_1(M)\arrow[rd]\arrow[rrrd, bend left=20, "\rho"]&&&\\
                & \pi_1(M\# M')\arrow[rr, dashed, "\rho\#\rho'"] && G\\
        \pi_1(M')\arrow[ru]\arrow[rrru, bend right=20, "\rho'"']&&&
    \end{tikzcd}
\end{equation*}

\begin{proposition}
    Under the above setting, we have
    \begin{equation*}
        I_\mathcal{H}(M\# M',\rho\#\rho') = I_\mathcal{H}(M,\rho)I_\mathcal{H}(M',\rho').
    \end{equation*}
\end{proposition}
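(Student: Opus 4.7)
The plan is to build a Kirby diagram of $M\# M'$ as a side-by-side disjoint union of Kirby diagrams for $M$ and $M'$, and then to check that every ingredient in the construction of $I_\mathcal{H}$ factors accordingly. Concretely, I would first choose Kirby diagrams $L = L_1\sqcup L_2$ for $M$ and $L' = L'_1\sqcup L'_2$ for $M'$, each with a single $0$-handle, and place them inside disjoint compact regions of the plane. Viewing the resulting diagram $L\sqcup L'$ at the level of $2$-handlebodies produces the boundary connected sum of the $2$-handlebodies of $M$ and $M'$; closing it up with the canonical $3$- and $4$-handles (whose existence and uniqueness rest on the Laudenbach-type result already cited in \cref{sec:coloredkirby}) then yields $M\# M'$. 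In particular $|(L\sqcup L')_1| = |L_1|+|L'_1|$ and $|(L\sqcup L')_2| = |L_2|+|L'_2|$, so the normalization prefactor $\dim(H_1)^{|L_1|-|L_2|}$ multiplies correctly across the two pieces.

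Next, applying the procedure of \cref{ssec:kirbyfundgr} to $L\sqcup L'$ recovers precisely the free product of the presentations obtained from $L$ and $L'$, because no undotted arc of $L$ threads a dotted component of $L'$ or vice versa, so no ``mixed'' relations are generated. This matches the Seifert--van Kampen identification $\pi_1(M\# M')\cong\pi_1(M)*\pi_1(M')$, and the defining property of $\rho\#\rho'$ forces its restriction to the image of $\pi_1(M)$ (resp. $\pi_1(M')$) to equal $\rho$ (resp. $\rho'$). Hence the $G$-coloring on $L\sqcup L'$ determined by $\rho\#\rho'$ restricts on each side to the $G$-colorings used to compute $\langle L,\rho\rangle_\mathcal{H}$ and $\langle L',\rho'\rangle_\mathcal{H}$.

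The bracket computation is then essentially immediate: since $L$ and $L'$ occupy disjoint planar regions, every element assigned to a point of $L\sqcup L'$ depends only on the diagram on the same side, and the subsequent evaluation by $\lambda$ along each undotted component factors as a product over the undotted components of $L$ times one over those of $L'$. This gives $\langle L\sqcup L',\rho\#\rho'\rangle_\mathcal{H} = \langle L,\rho\rangle_\mathcal{H}\,\langle L',\rho'\rangle_\mathcal{H}$, and combining with the multiplicative behaviour of the normalization factor established above yields the desired equality.

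The one step requiring genuine care, as opposed to a routine check, is the first one: rigorously verifying that $L\sqcup L'$ represents $M\# M'$ and not some other closed $4$-manifold. This is a standard Kirby-calculus fact, but it is the sole place where the proof invokes a non-trivial statement about handle decompositions, rather than the disjointness of the two diagrams and the compatibility of the assignment procedure with such disjointness.
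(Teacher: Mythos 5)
Your proposal is correct and follows essentially the same route as the paper: the paper's proof likewise rests on the fact that a Kirby diagram of $M\# M'$ is obtained by juxtaposing diagrams of $M$ and $M'$, together with the observation that $\rho\#\rho'$ preserves the colors of the dotted components on each side. You simply spell out in more detail the factorization of the normalization term, the fundamental group presentation, and the bracket, all of which the paper treats as immediate consequences of that juxtaposition.
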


\begin{proof}
    This follows from the fact that a Kirby diagram of $M\# M'$ is obtained by putting the diagrams of $M$ and $M'$ side by side. Note that by the definition of $\rho\#\rho'$, the colors of the dotted components remain the same after taking the connected sum.
\end{proof}

\subsubsection{Induced invariant of 4-manifolds}

As seen so far, our invariant needs to take a flat $G$-connection on a given 4-manifold $M$. In general, there seems to be no standard/interesting choice of a flat $G$-connection on $M$. It is therefore natural to consider invariants that depend only on the manifold itself. One approach to this is to take a sum of the invariants over all possible flat $G$-connections on $M$. However, this approach is not always well-defined; if $M$ admits infinitely many flat $G$-connections, the sum may not converge. Therefore, only when $G$ is finite, do we define an invariant $I_\mathcal{H}$ of 4-manifolds by
\begin{equation*}
    I_\mathcal{H}(M) \coloneqq \sum_{\rho\in\operatorname{Hom}(\pi_1(M),G)} I_\mathcal{H}(M,\rho),
\end{equation*}
where $\operatorname{Hom}(\pi_1(M),G)$ denotes the set of flat $G$-connections on $M$.

\section{Examples}\label[section]{sec:example}

In this section, we give some examples of calculations. We consider the invariants for $\mathbb{C}P^2$, ($\overline{\mathbb{C}P^2}$,) $S^2\times S^2$, $S^1\times S^3$, and $S^1\times S^1\times S^2$ in the setting of the quasitriangular Hopf $\mathbb{Z}_k$-algebra $(\mathcal{H}=\{H_{\alpha^p}\}_{p=0,\ldots,k-1},R_d)$ given in \cref{ex:cyclicgrhga}. Kirby diagrams of these 4-manifolds are presented in \cref{fig:kirbydiagram,fig:planarS2S1S1}.

Before delving into calculations, we will further examine the structures of $(\mathcal{H},R_d)$. Recall that $H_1=\langle 1,g^k,\allowbreak \ldots,g^{(l-1)k}\rangle$ is isomorphic to the group ring $\mathbb{C}[\mathbb{Z}_l]$ as Hopf algebras, and the universal $R$-matrices of $\mathcal{H}$ are given by
\begin{equation*}
    R_d = \frac{1}{l}\sum_{i,j=0}^{l-1}\omega^{-ij}g^{ik}\otimes g^{djk} \in H_1\otimes H_1,, \quad \omega=e^\frac{2\pi\sqrt{-1}}{l}, \quad d\in\{0,\ldots,l-1\}.
\end{equation*}
Following \cite{wakui1998universal}, we introduce the primitive idempotents $E_a\;(a=0,\ldots l-1)$ of $H_1$, which are given by
\begin{equation*}
    E_a \coloneqq \frac{1}{l}\sum_{i=0}^{l-1}\omega^{-ia}g^{ik}.
\end{equation*}
The $E_a$'s satisfy
\begin{gather*}
    S_1(E_a)=E_{l-a},\quad E_aE_b=\delta_{a,b}E_a,\quad \lambda(E_a)=1\quad (a,b=0,\ldots,l-1),\\
    E_0+\cdots+E_{l-1}=1,
\end{gather*}
where $\delta_{a,b}$ is the Kronecker delta and $\lambda=l\delta_1$ is the integral of $H_1^*$. Furthermore, with $E_a$'s, $R_d$ and  $(R_d)_{21}R_d$ can be written as
\begin{equation*}
    R_d=\sum_{i,j=0}^{l-1}\omega^{dij}E_i\otimes E_j,\quad (R_d)_{21}R_d=\sum_{i,j=0}^{l-1}\omega^{2dij}E_i\otimes E_j,
\end{equation*}
where $(R_d)_{21}\coloneqq\tau_{H_1,H_1}(R_d)$.

\subsection{\texorpdfstring{$\mathbb{C}P^2$}{CP2}}
Since the Kirby diagram of $\mathbb{C}P^2$ does not contain dotted components (in particular, $\mathbb{C}P^2$ is simply connected), the calculation is carried out in the Hopf algebra $H_1$. Considering the contribution of only one crossing, we have
\begin{align*}
    I_\mathcal{H}(\mathbb{C}P^2) = I_\mathcal{H}(\mathbb{C}P^2,1) &= \frac{1}{\dim(H_1)}\lambda(a_ib_i)\\
    &= \frac{1}{l}\sum_{i,j=0}^{l-1}\omega^{dij}\lambda(E_iE_j)\\
    &= \frac{1}{l}\sum_{i=0}^{l-1}\omega^{di^2}
\end{align*}
(\cref{fig:cp2calculation}). Furthermore, recall that reversing the orientation of $\mathbb{C}P^2$ gives rise to a non-diffeomorphic manifold, $\overline{\mathbb{C}P^2}$. Its Kirby diagram is the mirror image of that of $\mathbb{C}P^2$, leading to
\begin{equation*}
    I_\mathcal{H}(\overline{\mathbb{C}P^2}) = \frac{1}{\dim(H_1)}\lambda(S_1(b_i)a_i) = \frac{1}{l}\sum_{i=0}^{l-1}\omega^{-di^2}.
\end{equation*}

\begin{figure}
	\centering
	\includegraphics[scale=1]{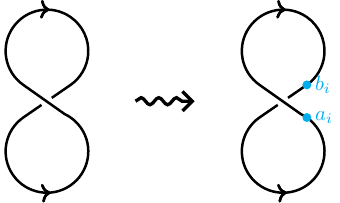}
	\caption{Calculation for $\mathbb{C}P^2$}
	\label{fig:cp2calculation}
\end{figure}

\subsection{\texorpdfstring{$S^2\times S^2$}{S2S2}}
The Kirby diagram of $S^2\times S^2$ also does not contain dotted components (in particular, $S^2\times S^2$ is simply connected). The invariant is
\begin{align*}
    I_\mathcal{H}(S^2\times S^2) = I_\mathcal{H}(S^2\times S^2,1) &= \frac{1}{\dim(H_1)^2}\lambda(b_ia_j)\lambda(a_ib_j)\\
    &= \frac{1}{l^2}(\lambda\otimes\lambda)((R_d)_{21}R_d)\\
    &= \frac{1}{l^2}\sum_{i,j=0}^{l-1}\omega^{2dij}(\lambda\otimes\lambda)(E_i\otimes E_j)\\
    &= \frac{1}{l^2}\sum_{i,j=0}^{l-1}\omega^{2dij}\\
    &= \frac{(l,d)}{l}\left(\frac{3+(-1)^{l/(l,d)}}{2}\right)
\end{align*}
(\cref{fig:s2s2calculation}), where $(l,d)$ is the greatest common divisor of $l$ and $d$.

\begin{figure}
	\centering
	\includegraphics[scale=1]{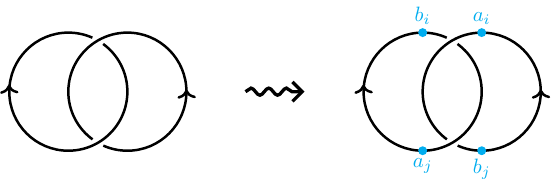}
	\caption{Calculation for $S^2\times S^2$}
	\label{fig:s2s2calculation}
\end{figure}

\subsection{\texorpdfstring{$S^1\times S^3$}{S1S3}}
Since $\pi_1(S^1\times S^3)\cong\mathbb{Z}$, the dotted component of the Kirby diagram of $S^1\times S^3$ can be colored with any element of $\mathbb{Z}_k$. However, since no undotted components are in the diagram, the invariant is simply, by the construction,
\begin{equation*}
    I_\mathcal{H}(S^1\times S^3,\alpha) = \frac{1}{\dim(H_1)^{-1}} = l
\end{equation*}
for every color $\alpha\in\mathbb{Z}_k$. Also, the induced invariant is
\begin{equation*}
    I_\mathcal{H}(S^1\times S^3) = \sum_{\alpha\in\mathbb{Z}_k}I_\mathcal{H}(S^1\times S^3,\alpha)=kl
\end{equation*}

\subsection{\texorpdfstring{$S^1\times S^1\times S^2$}{S2S1S1}}
As in \cref{fig:planarS2S1S1}, we present the fundamental group as $\pi_1(S^1\times S^1\times S^2)=\langle s_1,s_2\mid s_1s_2s_1^{-1}s_2^{-1}\rangle\allowbreak\cong\mathbb{Z}\times\mathbb{Z}$, where $s_1$ corresponds to the left dotted component and $s_2$ to the right. Since $\mathbb{Z}_k$ is abelian, both dotted components can be colored with any element of $\mathbb{Z}_k$. We pick a flat $G$-connection $\rho\colon\pi_1(S^1\times S^1\times S^2)\to \mathbb{Z}_k$ and set $\alpha\coloneqq\rho(s_1)$ and $\beta\coloneqq\rho(s_2)$. Then the invariant is
\begin{align*}
    \MoveEqLeft I_\mathcal{H}(S^1\times S^1\times S^2,\rho)\\
    &= \frac{1}{\dim(H_1)^0}\lambda(b_ia_j\Lambda_\beta^{(2)}S_\alpha(\Lambda_\alpha^{(2)})S_\beta(\Lambda_\beta^{(1)})\Lambda_\alpha^{(1)})\lambda(a_ib_j)\\
    &= \sum_{i,j=0}^{l-1}\omega^{2dij}\lambda(E_i\Lambda_\alpha^{(1)}S_\alpha(\Lambda_\alpha^{(2)})S_\beta(\Lambda_\beta^{(1)})\Lambda_\beta^{(2)})\lambda(E_j) && \text{($\mathcal{H}$ is commutative)}\\
    &= \sum_{i,j=0}^{l-1}\omega^{2dij} && \text{(\ref{def:hga9})}\\
    &= l(l,d)\left(\frac{3+(-1)^{l/(l,d)}}{2}\right)
\end{align*}
(\cref{fig:s1s1s2calculation}). Since the result does not depend on the $G$-connection, the induced invariant is
\begin{equation*}
    I_\mathcal{H}(S^1\times S^1\times S^2) = k^2l(l,d)\left(\frac{3+(-1)^{l/(l,d)}}{2}\right).
\end{equation*}

\begin{figure}
	\centering
	\includegraphics[scale=1]{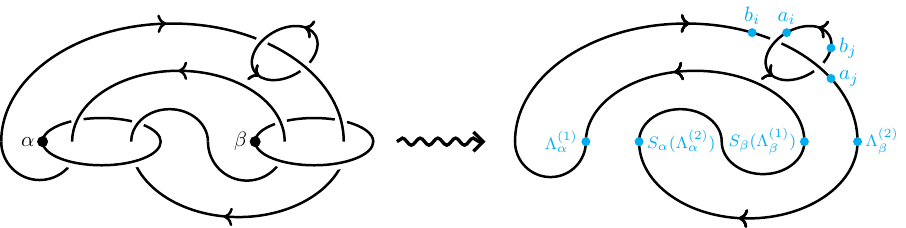}
	\caption{Calculation for $S^1\times S^1\times S^2$}
	\label{fig:s1s1s2calculation}
\end{figure}

\printbibliography[heading=bibintoc]

\end{document}